\documentclass[11pt]{amsart}

\usepackage[T1]{fontenc}
\usepackage{lmodern}
\usepackage{microtype}
\usepackage{amsmath,amssymb,amsthm,mathtools}
\usepackage[margin=1in]{geometry}
\usepackage{enumitem}
\usepackage[colorlinks=true,linkcolor=blue,citecolor=blue,urlcolor=blue]{hyperref}
\usepackage{aliascnt}

\newtheorem{theorem}{Theorem}[section]
\newaliascnt{proposition}{theorem}
\newtheorem{proposition}[proposition]{Proposition}
\aliascntresetthe{proposition}
\newaliascnt{lemma}{theorem}
\newtheorem{lemma}[lemma]{Lemma}
\aliascntresetthe{lemma}
\newaliascnt{corollary}{theorem}
\newtheorem{corollary}[corollary]{Corollary}
\aliascntresetthe{corollary}
\newaliascnt{hypothesis}{theorem}
\newtheorem{hypothesis}[hypothesis]{Hypothesis}
\aliascntresetthe{hypothesis}
\newaliascnt{question}{theorem}
\newtheorem{question}[question]{Question}
\aliascntresetthe{question}
\theoremstyle{definition}
\newaliascnt{definition}{theorem}

\aliascntresetthe{definition}
\newaliascnt{remark}{theorem}
\newtheorem{remark}[remark]{Remark}
\aliascntresetthe{remark}

\usepackage[nameinlink,capitalise]{cleveref}
\crefname{theorem}{Theorem}{Theorems}
\Crefname{theorem}{Theorem}{Theorems}
\crefname{proposition}{Proposition}{Propositions}
\Crefname{proposition}{Proposition}{Propositions}
\crefname{lemma}{Lemma}{Lemmas}
\Crefname{lemma}{Lemma}{Lemmas}
\crefname{corollary}{Corollary}{Corollaries}
\Crefname{corollary}{Corollary}{Corollaries}
\crefname{hypothesis}{Hypothesis}{Hypotheses}
\Crefname{hypothesis}{Hypothesis}{Hypotheses}
\crefname{question}{Question}{Questions}
\Crefname{question}{Question}{Questions}
\crefname{definition}{Definition}{Definitions}
\Crefname{definition}{Definition}{Definitions}
\crefname{remark}{Remark}{Remarks}
\Crefname{remark}{Remark}{Remarks}

\numberwithin{equation}{section}

\newcommand{\Pmin}{P^{-}}
\newcommand{\M}{\mathcal M}
\newcommand{\R}{\mathcal R}
\newcommand{\LL}{\mathcal L}
\newcommand{\UU}{\mathcal U}
\newcommand{\eps}{\varepsilon}
\newcommand{\one}{\mathbf 1}

\title[Average order for a shifted pairwise-coprime problem]{An Average-Order Theorem for a Shifted Pairwise-Coprime Extremal Problem}

\author[E. Li]{Eric Li}
\dedicatory{\normalfont\normalsize Trinity College, University of Cambridge}
\date{June 15, 2026}
\thanks{Email addresses: \href{mailto:el593@cantab.ac.uk}{el593@cantab.ac.uk}, \href{mailto:contact@ericli.com}{contact@ericli.com}.}
\thanks{The paper's forty named theorem, lemma, proposition, and corollary environments have been formally verified in Lean; see Section~\ref{sec:formalisation} and \cite{LeanFormalisation}.}

\subjclass[2020]{11N25, 11A05, 11B05, 11N36}
\keywords{pairwise coprime sets, Erd\H{o}s problems, rough numbers, Buchstab function, sieve methods}

\hypersetup{
  pdftitle={An Average-Order Theorem for a Shifted Pairwise-Coprime Extremal Problem},
  pdfauthor={Eric Li},
  pdfsubject={Pairwise coprime sets and rough-number methods},
  pdfkeywords={pairwise coprime sets, Erdos problems, rough numbers, Buchstab function, sieve methods}
}

\begin{document}

\begin{abstract}
For $n\ge2$, let
\[
\M(n)=\sup_{\substack{A\subset[1,n)\\(a,b)=1\ (a\ne b)}}
       \sum_{a\in A}\frac1{n-a}.
\]
Erd\H{o}s asked whether $\M(n)\le \sum_{p<n}1/p+O(1)$ uniformly in $n$.
We prove the quantitative average order
\[
\sum_{n\le N}\M(n)=e^{-\gamma}N\log\log N+O(N).
\]
The proof combines the self-rough lower construction $\{n-d:\Pmin(n-d)>d\}$ with a bounded-cost dual certificate and Buchstab--de Bruijn estimates.  We further show
\[
\M(n)=(e^{-\gamma}+o(1))\log\log n
\]
for almost all $n$, with a quantitative exceptional-set bound, so Erd\H{o}s's inequality holds for almost all $n$.  The argument combines a long-interval two-dimensional beta sieve for two moving forbidden residue classes with an exact finite singular-series cancellation.  The remaining obstacles to a full variance estimate are a shifted-prime second moment for $d>\sqrt{2N}$ and finite-$u$ rough correlations in the intermediate tail, yielding a precise conditional criterion.  For every $\varepsilon>0$ we also prove the uniform pointwise bound
$\M(n)\le(2+\varepsilon)\log\log n+O_\varepsilon(1)$ and explain the linear-sieve barrier at the constant $2$.  The results of this paper have been formally verified in Lean.
\end{abstract}

\maketitle

\section{Introduction}

Let $[1,n)=\{1,2,\ldots,n-1\}$.  Erd\H{o}s asked whether every pairwise coprime set $A\subset[1,n)$ satisfies
\begin{equation}\label{eq:erdos-question}
\sum_{a\in A}\frac1{n-a}\leq \sum_{p<n}\frac1p+O(1),
\end{equation}
where $p$ runs over primes and the constant is absolute.  This is the corrected form of a question of Erd\H{o}s; historical and bibliographic context is collected later in Section~\ref{sec:historical-notes}.

Define
\begin{equation}\label{eq:M-def}
\M(n)=\sup_{\substack{A\subset [1,n)\\(a,b)=1\ (a\ne b)}}
\sum_{a\in A}\frac1{n-a}.
\end{equation}
The prime set
\[
A=\{p<n:p\in\mathbb P\}
\]
shows that shifted-prime reciprocal sums are an unavoidable special case.  Here and throughout, $\mathbb P$ denotes the set of primes.  Indeed, Erd\H{o}s Problem \#950 asks about
\[
f(n)=\sum_{p<n}\frac1{n-p},
\]
including whether $f(n)=o(\log\log n)$ for every $n$ \cite{Bloom950}.  The pointwise problem \eqref{eq:erdos-question} therefore contains hard short-interval prime phenomena.

The main unconditional result of this paper is an average theorem for the exact extremal quantity.

\begin{theorem}[Average order]\label{thm:main}
As $N\to\infty$,
\begin{equation}\label{eq:main-theorem}
\sum_{n\leq N}\M(n)=e^{-\gamma}N\log\log N+O(N).
\end{equation}
\end{theorem}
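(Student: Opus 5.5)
We prove matching lower and upper bounds, writing $d=n-a$ throughout, so that $\sum_{a\in A}1/(n-a)=\sum_{d}1/d$ over the shifts $d$ with $n-d\in A$.

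\textbf{Lower bound.} We use the self-rough set suggested in the abstract,
\[
A_n=\{\,n-d:\ 1\le d<n,\ \Pmin(n-d)>d\,\}.
\]
This set is pairwise coprime. Suppose $a=n-d$ and $a'=n-d'$ lie in $A_n$ with $d<d'$ and share a prime $p$. Then $p$ divides $a-a'=d'-d<d'$. But every prime factor of $a'$ exceeds $d'$, so $p>d'$, a contradiction.

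Hence $\M(n)\ge\sum_{d<n}\one[\Pmin(n-d)>d]/d$. Summing over $n\le N$ and swapping the order of summation gives
\[
\sum_{d}\frac1d\,\#\{m\le N-d:\ \Pmin(m)>d\}.
\]
For $d\le N^{1-\eps}$, the Buchstab--de Bruijn asymptotic applies:
\[
\#\{m\le x:\Pmin(m)>d\}=x\prod_{p\le d}(1-1/p)\,\bigl(1+O(\text{error})\bigr)\sim \frac{e^{-\gamma}x}{\log d}.
\]
Uniformity near $u=\log x/\log d$ bounded is handled by $\omega(u)\to e^{-\gamma}$ together with the crude bound $\ll x/\log d$. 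This yields
\[
\sum_{d\le N}\frac{e^{-\gamma}N}{d\log d}+O(N)=e^{-\gamma}N\log\log N+O(N),
\]
since $\sum_{d\le N}1/(d\log d)=\log\log N+O(1)$. The ranges $d>N^{1-\eps}$, and the correction from $N-d$ versus $N$, contribute $O(N)$ because $\sum_{N^{1-\eps}<d\le N}1/(d\log d)=O(1)$.

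\textbf{Upper bound via a dual certificate.} Let $A$ be any pairwise coprime subset of $[1,n)$. Each $a\in A$ with $a>1$ has a least prime factor $q(a)=\Pmin(a)$, and these primes are distinct across $A$. So $A$ injects into primes, plus possibly the element $1$, which costs at most $1/(n-1)$. Assign to each prime $q<n$ a weight $w_n(q)=\max\{1/(n-a):\ a<n,\ \Pmin(a)=q\}$. Then $\M(n)\le 1+\sum_q w_n(q)$.

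This bound is too lossy as it stands, so we refine it into a bounded-cost certificate. Elements $a=n-d$ with $\Pmin(a)\le d$ are charged to a prime $p\le d$ dividing $a$. Since distinct elements use distinct primes, the plan is to bound
\[
\sum_{a\in A}\frac1{n-a}\le\sum_{d<n}\frac{\one[\Pmin(n-d)>d]}{d}+\sum_{p}c_n(p),
\]
where $c_n(p)$ is the cost charged to $p$, namely $\max\{1/d:\ p\mid n-d,\ \Pmin(n-d)\le d,\ \ldots\}$. Since $p\mid n-d$ forces $d\equiv n\pmod p$, the natural refinement is to charge each element with $\Pmin(a)=p\le d$ only when $d\ge p$. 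This gives $c_n(p)\le 1/d_p(n)$, where $d_p(n)$ is the least admissible $d\ge p$ with $d\equiv n \pmod p$. Averaging over $n$, each residue class contributes on average $\ll(\log p)/p$ scaled by $1/p$; I would aim to show $\sum_{n\le N}\sum_p c_n(p)\ll N\sum_p \log p/p^2+\ldots=O(N)$.

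The first sum averages to $e^{-\gamma}N\log\log N+O(N)$ by the lower-bound computation.

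\textbf{Main obstacle.} The difficulty is designing the charging scheme so that the extra cost is genuinely $O(1)$ on average. Elements with small $d$ but small least prime factor, such as $n-d$ even with $d$ small, must not each cost $1/d$ summed over many primes. Since each prime is used at most once, the extra cost is at most $\sum_{p}\min_{d\ge p,\ d\equiv n\,(p)}1/d\le\sum_p 1/p$, which is too big. So I would instead use a weighted LP-dual: assign weights $y_p\approx 1/(p\log p)$-type amounts to primes and $z_d$ to shifts, so that every admissible $a$ is covered. I expect verifying this covering inequality on average over $n$, through Buchstab estimates in the regime $u$ bounded, to be the hardest step.
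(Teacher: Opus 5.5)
There is a genuine gap: your proposal proves only the lower half of the theorem.

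\textbf{Lower bound.} This is the paper's argument: the self-rough set, the swap to $\sum_d d^{-1}\R(N-d,d)$, Buchstab--de Bruijn for $d\le N^{\delta}$, and the crude bound $\R(x,y)\ll x/\log y+1$ for larger $d$. One step needs tightening. The Mertens-product main term is only asymptotically correct as $u=\log N/\log d\to\infty$. Mere convergence $\omega(u)\to e^{-\gamma}$ leaves an error of size $o(N\log\log N)$, not $O(N)$. You need a rate. For example, $\omega(u)-e^{-\gamma}\ll\rho_{\rm D}(u)$ gives $\int_1^\infty|\omega(u)-e^{-\gamma}|\,du/u<\infty$. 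Alternatively, use the fundamental lemma with relative error $e^{-cu}$ together with Mertens with relative error $O(1/\log d)$.

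\textbf{Upper bound.} This is half of the theorem, and you leave it open. Your charging scheme with residual $\one[\Pmin(n-d)>d]/d$ fails, as you note, because a prime $p\le d$ can be charged $\asymp 1/p$.

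The missing idea is to shift the threshold by a logarithm. This makes the covering pointwise and elementary. Take $d\ge16$. If $a=n-d$ has a prime factor $p\le d/(2\log d)$, then $p\log p\le d/2$, and hence $1/d\le\sum_{q\mid a}1/(q\log q)$. Distinct elements of $A$ have disjoint prime-divisor sets, so these charges total at most $\sum_q 1/(q\log q)<\infty$, for every $n$ and every $A$.

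The uncovered elements are exactly those with $\Pmin(n-d)>z_d:=d/(2\log d)$. This gives $\M(n)\le C_0+\UU(n)$ pointwise. Only the residual $\UU$ has to be averaged, and $\sum_{n\le N}\UU(n)=\sum_{16\le d<N}d^{-1}\R(N-d,z_d)$. Your lower-bound computation carries over, because $1/\log z_d=1/\log d+O(\log\log d/(\log d)^2)$ and $\sum_d \log\log d/(d(\log d)^2)<\infty$.

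So you have misplaced the difficulty. There is no covering inequality to verify on average over $n$. There is also no bounded-$u$ difficulty: the range $d>N^\delta$ contributes $O(N)$ by the crude sieve bound, exactly as in the lower bound. Without this step, or an equally explicit choice of your weights $y_p,z_d$, the proposal establishes only $\sum_{n\le N}\M(n)\ge e^{-\gamma}N\log\log N+O(N)$.
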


Since Mertens' theorem gives
\begin{equation}\label{eq:average-prime-harmonic}
\sum_{n\leq N}\sum_{p<n}\frac1p
=N\log\log N+O(N),
\end{equation}
Theorem \ref{thm:main} proves an averaged form of \eqref{eq:erdos-question}, with a positive proportional saving in the leading term.

\begin{corollary}[Average saving against the Erd\H{o}s benchmark]\label{cor:average-saving}
One has
\[
\sum_{n\leq N}\M(n)
= e^{-\gamma}\sum_{n\leq N}\sum_{p<n}\frac1p+O(N).
\]
In particular,
\[
\sum_{n\leq N}\M(n)
\leq \sum_{n\leq N}\sum_{p<n}\frac1p
-(1-e^{-\gamma})N\log\log N+O(N).
\]
\end{corollary}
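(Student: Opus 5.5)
The corollary follows by combining Theorem~\ref{thm:main} with the Mertens average \eqref{eq:average-prime-harmonic}; no new arithmetic input is needed. First I justify \eqref{eq:average-prime-harmonic}. Mertens' theorem gives $\sum_{p<n}1/p=\log\log n+B+O(1/\log n)$ for $n\ge3$, while the terms $n=1,2$ contribute $O(1)$. Summing over $n\le N$, I need
\[
\sum_{3\le n\le N}\log\log n=N\log\log N+O(N).
\]
For $n\in(\sqrt N,N]$ one has $\log\log N-\log 2\le\log\log n\le\log\log N$, so these terms give $N\log\log N+O(N)$. The terms with $n\le\sqrt N$ contribute $O(\sqrt N\log\log N)=O(N)$. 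The constant $B$ and the errors $O(1/\log n)$ sum to $O(N)$.

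Multiplying by $e^{-\gamma}$ gives $e^{-\gamma}\sum_{n\le N}\sum_{p<n}1/p=e^{-\gamma}N\log\log N+O(N)$. Subtracting this from \eqref{eq:main-theorem} yields the first identity, since the leading terms cancel exactly and only $O(N)$ remains.

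For the inequality, I write
\[
\sum_{n\le N}\M(n)=\sum_{n\le N}\sum_{p<n}\frac1p-(1-e^{-\gamma})\sum_{n\le N}\sum_{p<n}\frac1p+O(N).
\]
I then replace the middle sum by $N\log\log N+O(N)$ using \eqref{eq:average-prime-harmonic}. Because $1-e^{-\gamma}$ is a fixed positive constant (about $0.4385$), the resulting error is still $O(N)$. This gives the stated inequality, in fact as an equality up to $O(N)$.

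The only step needing any care is the elementary summation of $\log\log n$. The substantive content lies entirely in Theorem~\ref{thm:main}, which is assumed here.
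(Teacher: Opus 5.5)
Your proposal is correct and is essentially the paper's argument: combine Theorem~\ref{thm:main} with the Mertens average \eqref{eq:average-prime-harmonic}, then write $e^{-\gamma}S=S-(1-e^{-\gamma})S$ with $S=\sum_{n\le N}\sum_{p<n}1/p$. The only cosmetic difference is how the Mertens average is checked: you sum the pointwise asymptotic $\log\log n+B+O(1/\log n)$ over $n$, whereas the paper interchanges summations to get $\sum_{p<N}(N-p)/p=N\sum_{p<N}1/p-\pi(N)+O(1)$; both are routine.
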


The proof of Theorem \ref{thm:main} gives more structure than a first moment identity.  Let
\begin{equation}\label{eq:L-def-intro}
\LL(n)=\sum_{\substack{1\le d<n\\ \Pmin(n-d)>d}}\frac1d
\end{equation}
and
\begin{equation}\label{eq:U-def-intro}
\UU(n)=\sum_{16\le d<n}\frac1d\,
\one_{\{\Pmin(n-d)>d/(2\log d)\}}.
\end{equation}
The self-rough construction gives $\LL(n)\le \M(n)$ pointwise, and the bounded-cost dual certificate gives
\begin{equation}\label{eq:sandwich-intro}
\M(n)\le C+\UU(n)
\end{equation}
pointwise.  The theorem controls this sandwich only in mean:
\[
\sum_{n\le N}\LL(n)=e^{-\gamma}N\log\log N+O(N),
\qquad
\sum_{n\le N}\UU(n)=e^{-\gamma}N\log\log N+O(N).
\]
This caveat is important: Theorem \ref{thm:main} is not a proof of the pointwise Erd\H{o}s inequality.  It does, however, imply an almost-all reduction.

\begin{theorem}[Almost-all reduction to the self-rough sum]\label{thm:almost-all}
For almost all integers $n$,
\begin{equation}\label{eq:M-L-aa}
\M(n)=\LL(n)+o(\log\log n).
\end{equation}
More quantitatively, for every fixed $\eta>0$,
\begin{equation}\label{eq:M-L-exception-rate}
\#\{n\le N:\M(n)-\LL(n)>\eta\log\log n\}
\ll_\eta \frac{N}{\log\log N}.
\end{equation}
\end{theorem}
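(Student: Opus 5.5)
The natural route is a first-moment (Markov) argument on the nonnegative defect $\M(n)-\LL(n)$. We use the pointwise sandwich $\LL(n)\le\M(n)\le C+\UU(n)$ from the self-rough construction and \eqref{eq:sandwich-intro}, together with the two mean-value asymptotics
\[
\sum_{n\le N}\LL(n)=e^{-\gamma}N\log\log N+O(N),\qquad
\sum_{n\le N}\UU(n)=e^{-\gamma}N\log\log N+O(N),
\]
which were established in the course of proving Theorem~\ref{thm:main}.

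First set $D(n)=\M(n)-\LL(n)\ge 0$. The sandwich gives $0\le D(n)\le C+\UU(n)-\LL(n)$ pointwise. Summing over $n\le N$ and subtracting the two asymptotics cancels the main terms $e^{-\gamma}N\log\log N$ exactly. This leaves
\[
\sum_{n\le N}D(n)\le CN+O(N)=O(N).
\]
This cancellation is the key point: the upper and lower functions have the \emph{same} leading constant, so the averaged gap is only $O(N)$, i.e.\ $O(1)$ on average.

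Next apply Markov's inequality on a range that avoids small $n$. Fix $\eta>0$ and split $n\le N$ into $n\le N^{1/2}$ and $N^{1/2}<n\le N$. The first range contributes at most $N^{1/2}\ll N/\log\log N$ exceptions trivially. For $N^{1/2}<n\le N$ we have $\log\log n\ge\log\log N-\log 2$. Hence every exceptional $n$ in this range satisfies $D(n)>\eta(\log\log N-\log 2)\gg\eta\log\log N$ for large $N$. Markov's inequality then bounds the number of such $n$ by
\[
\frac{\sum_{n\le N}D(n)}{\eta\log\log N/2}\ll_\eta\frac{N}{\log\log N},
\]
which is \eqref{eq:M-L-exception-rate}.

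To deduce \eqref{eq:M-L-aa}, take $\eta=1/k$ for $k=1,2,\dots$. Each exceptional set $E_k=\{n:D(n)>\log\log n/k\}$ has density zero by \eqref{eq:M-L-exception-rate}. A standard diagonal argument then produces a single density-zero set outside which $D(n)/\log\log n\to 0$. Concretely, choose increasing thresholds $N_k$ such that $\#(E_k\cap[1,N])\le N/k$ for $N\ge N_k$, and let the exceptional set coincide with $E_k$ on $[N_k,N_{k+1})$.

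The only substantive input is that the mean values of $\LL$ and $\UU$ agree up to $O(N)$. That is, the Buchstab--de Bruijn evaluation must give the same constant $e^{-\gamma}$ for the thresholds $d$ and $d/(2\log d)$, with the discrepancy $\sum_{n\le N}\sum_d d^{-1}\bigl(\one_{\Pmin(n-d)>d/(2\log d)}-\one_{\Pmin(n-d)>d}\bigr)$ bounded by $O(N)$. This is the main obstacle. I would take it from the proof of Theorem~\ref{thm:main}. Checking it directly amounts to the density estimate
\[
\frac{1}{\log(d/2\log d)}-\frac{1}{\log d}\ll\frac{\log\log d}{\log^2 d},
\]
which is summable against $1/d$. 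Everything after that step is elementary.
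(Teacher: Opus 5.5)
Your proposal is correct and follows the paper's proof: the mean sandwich $\LL\le\M\le C_0+\UU$ together with the matching first moments of $\LL$ and $\UU$ gives $\sum_{n\le N}(\M(n)-\LL(n))=O(N)$, then Markov's inequality at a threshold $\asymp\eta\log\log N$, then discarding $n\le\sqrt N$ to pass from $\log\log N$ to $\log\log n$. The differences are cosmetic. You apply Markov directly to the nonnegative defect $\M-\LL$, whereas the paper applies it to the majorant $C_1+\UU-\LL$, which needs an extra pointwise nonnegativity check. You also spell out the diagonal argument for the qualitative $o(\log\log n)$ statement, which the paper leaves implicit.
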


Thus the distribution of $\M(n)$, especially its upper edge, is essentially the distribution of the self-rough sum $\LL(n)$ for almost all $n$.  The second main result of the paper proves that this distribution is concentrated at the Buchstab value on the natural $\log\log n$ scale.

\begin{theorem}[Almost-all Erd\H{o}s inequality and normal order]\label{thm:main-aa}
For almost all integers $n$,
\begin{equation}\label{eq:main-aa-intro}
\M(n)=(e^{-\gamma}+o(1))\log\log n.
\end{equation}
More quantitatively, for every fixed $\eta>0$,
\begin{equation}\label{eq:main-aa-exception-intro}
\#\left\{n\le N:
\left|\M(n)-e^{-\gamma}\log\log n\right|>\eta\log\log n
\right\}
\ll_\eta \frac{N\log\log\log N}{\log\log N}.
\end{equation}
Consequently, there is a set $\mathcal N\subset\mathbb N$ of asymptotic density one and an absolute constant $C$ such that, for every sufficiently large $n\in\mathcal N$,
\[
\M(n)\le \sum_{p<n}\frac1p+C.
\]
\end{theorem}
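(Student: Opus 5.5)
By Theorem~\ref{thm:almost-all}, it suffices to show that $\LL(n)=(e^{-\gamma}+o(1))\log\log n$ for almost all $n$, with exceptional set of size $\ll_\eta N\log\log\log N/\log\log N$. Indeed, the exceptional set in \eqref{eq:M-L-exception-rate} has size $\ll_\eta N/\log\log N$, which is smaller. The lower bound $\LL(n)\le\M(n)$ holds pointwise, so the deviation of $\M$ is controlled on both sides. For the concentration of $\LL(n)$ I will use a second moment (Chebyshev) argument, with the parameter $d$ truncated. Fix a slowly growing cutoff $D=D(N)$, for instance $\log D\asymp(\log\log N)^{1-\delta}$, or chosen so that $\log\log D\ge\log\log N-\log\log\log N$. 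Write
\[
\LL(n)=\LL_{\le D}(n)+\LL_{>D}(n),
\]
where the two pieces collect the terms with $d\le D$ and $d>D$. Because of the mean identity $\sum_{n\le N}\LL(n)=e^{-\gamma}N\log\log N+O(N)$ and the fact that $\LL_{>D}\ge0$, it is enough to carry out two steps.
\begin{enumerate}[label=(\roman*)]
\item Show that $\sum_{n\le N}\LL_{>D}(n)\ll N(\log\log N-\log\log D+1)$. This follows from the Buchstab/de Bruijn first-moment estimates behind Theorem~\ref{thm:main}, since the density of $\Pmin(m)>d$ is $\asymp 1/\log d$. With the choice of $D$ above, the bound is $\ll N\log\log\log N$. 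Markov's inequality then shows that $\LL_{>D}(n)>\tfrac{\eta}{2}\log\log n$ for at most $\ll_\eta N\log\log\log N/\log\log N$ integers $n\le N$. This step is the source of the $\log\log\log N$ in the exceptional bound.
\item Prove a variance bound $\sum_{n\le N}\bigl(\LL_{\le D}(n)-\mu_D\bigr)^2=o\bigl(N(\log\log N)^2\bigr)$ with $\mu_D=e^{-\gamma}\log\log D+O(1)$. A saving of $O(N\log\log N)$ would be best, and it would give the exceptional bound $N/\log\log N$ on this side.
\end{enumerate}

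For (ii), expand the square. The off-diagonal terms are
\[
\sum_{d_1<d_2\le D}\frac{1}{d_1d_2}\#\{n\le N:\Pmin(n-d_1)>d_1,\ \Pmin(n-d_2)>d_2\}.
\]
The condition on $n$ says that $n$ avoids two moving residue classes, $d_1$ and $d_2$, modulo each prime $p\le d_2$: two classes for $p\le d_1$ with $p\nmid d_2-d_1$, and one class otherwise. Since $D$ is tiny compared with $N$, a two-dimensional (beta) sieve, or even the fundamental lemma, applies with level $N^{1/2}$ and interval length $N$. It gives the count
\[
N\prod_{p\le d_1}\Bigl(1-\frac{\omega_{12}(p)}{p}\Bigr)\prod_{d_1<p\le d_2}\Bigl(1-\frac1p\Bigr)\,(1+o(1)),
\]
where $\omega_{12}(p)=2-\one_{p\mid d_2-d_1}$, with acceptable error. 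Dividing by the product of the individual densities leaves the singular series
\[
\mathfrak S(d_2-d_1;d_1)=\prod_{p\le d_1}\frac{1-\omega_{12}(p)/p}{(1-1/p)^2}.
\]
The key input is then the averaged cancellation
\[
\sum_{h}\bigl(\mathfrak S(h;d_1)-1\bigr)\approx0,
\]
taken over $h=d_2-d_1$ in dyadic ranges. This is the standard Gallagher-type fact that singular series average to $1$. With the weights $1/(d_1d_2)$, it makes the diagonal-corrected second moment equal to $\mu_D^2N+O(N\log\log D)$. The diagonal $d_1=d_2$ contributes only $\sum 1/(d^2\log d)\cdot N=O(N)$.

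The main obstacle is the exact finite singular-series cancellation, uniformly for $d_1$ up to $D$ and $h$ up to $D$. The Euler product is truncated at $d_1$ rather than being infinite, so Gallagher's asymptotic does not apply directly, and the weight $1/d_2$ is not uniform in $h$. I would first try to write
\[
\mathfrak S(h;d_1)-1=\sum_{q\mid P(d_1),\,q>1}\mu(q)^2\,c_q(h)\,g(q),
\]
a Ramanujan-sum expansion with a multiplicative weight $g$, and sum over $h$ in intervals. Since $c_q$ averages to zero over full periods, the only residual comes from incomplete periods with $q$ large. Those large $q$ are damped by the condition $q\mid P(d_1)$ and by the decay $g(q)\ll 1/q^{2-\epsilon}$, at least on squarefree support. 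If that residual cannot be bounded, I would restrict to $D=\exp((\log\log N)^{1-\delta})$, where cruder fundamental-lemma bounds should suffice, and absorb the loss into step (i). This still yields the stated $\ll_\eta N\log\log\log N/\log\log N$. The final clause follows because $e^{-\gamma}<1$ and Mertens gives $\sum_{p<n}1/p=\log\log n+O(1)$, so $\M(n)\le(e^{-\gamma}+\eta)\log\log n\le\sum_{p<n}1/p$ for large $n$ outside the exceptional set, which has density zero.
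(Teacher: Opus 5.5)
Your architecture matches the paper's: reduce to $\LL$ via Theorem~\ref{thm:almost-all}, truncate at $D=N^{\beta_N}$, and control $d>D$ by a first moment and Markov, which is where the $\log\log\log N$ comes from. You then control $\LL_{\le D}$ by a normalized variance bound from a two-dimensional fundamental lemma at level $N^{1/2}$, together with signed singular-series cancellation.

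The gap is in how you treat the crux. Your first suggested cutoff and your fallback, $\log D\asymp(\log\log N)^{1-\delta}$ or $D=\exp((\log\log N)^{1-\delta})$, do not work. For such $D$ one has $\log\log D\asymp\log\log\log N=o(\log\log N)$, so $\LL_{>D}(n)$ carries essentially all the mass $e^{-\gamma}\log\log N$. Step~(i) then only gives $\sum_{n\le N}\LL_{>D}(n)\asymp N\log\log N$, and Markov at threshold $\tfrac\eta2\log\log n$ says nothing. Showing that the tail concentrates would need its own second moment, which is what you were trying to avoid.

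The cutoff must satisfy two conditions:
\begin{itemize}
\item $\beta_N\to0$, so that $s=\log N^{1/2}/\log D\to\infty$; at a fixed power of $N$ the upper and lower sieve functions do not coalesce.
\item $\log(1/\beta_N)\ll\log\log\log N$ for the stated rate.
\end{itemize}
The paper takes $\beta_N=(\log\log N)^{-1/2}$. At that scale there is no cruder substitute for the signed cancellation. Odd gaps have $C_{d_1}(h)-1=-1$ exactly, so any absolute-value bound on the covariances is of order $N(\log\log D)^2\asymp N(\log\log N)^2$.

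The step you left open does close along the lines you sketched. With $g(q)=\prod_{p\mid q}(p-1)^{-2}$ one has $C_y(h)=\sum_{q\mid P(y)}g(q)c_q(h)$. For squarefree $q>1$, the sum $\sum_{h\le H}c_q(h)=\sum_{e\mid q}e\,\mu(q/e)\bigl(\lfloor H/e\rfloor-H/e\bigr)$ is at most $\sigma(q)$ in absolute value for every $H$, including $H<q$. Hence
\[
\Bigl|\sum_{h\le H}\{C_y(h)-1\}\Bigr|
\le\sum_{\substack{q\mid P(y)\\ q>1}}g(q)\sigma(q)
\le\prod_{p\le y}\Bigl(1+\frac{p+1}{(p-1)^2}\Bigr)\ll\log(2y)
\]
uniformly in $H$. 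Large $q$ need no damping, because the total mass is only $\log y$. This is Lemma~\ref{lem:finite-singular-series-average}, which the paper phrases with non-negative coefficients $a_y(q)$ against $\one_{q\mid h}$. It uses the exact identity $\sum_q a_y(q)/q=1$ and the bound $\sum_q a_y(q)=C_y(0)\ll\log y$.

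Your worry about the non-uniform weight $1/d_2$ is handled by Abel summation. The weight $w(h)=V(d_1+h)/(d_1+h)$ is non-increasing, so the inner sum is $\ll\log(2d_1)\,w(1)\ll1/d_1$. Summing against $V(d_1)/d_1$ gives $O(1)$, i.e.\ an $O(N)$ main-term variance.

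Two smaller points. First, the relative sieve error $\eta_N$ must be summed against $\sum_{d_1<d_2}V(d_1,d_2;h)/(d_1d_2)\ll(\log\log D)^2$, which follows from the same lemma. Second, odd $h$ must be removed before invoking the beta sieve, since $\nu(2)=2$ is inadmissible there; the count is then exactly $0$.
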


The proof of Theorem \ref{thm:main-aa} is given in Section \ref{sec:distribution}.  It rests on a truncated second-moment estimate for $\LL(n)$ and a first-moment bound for the remaining tail.  The dependency chain is
\[
\begin{gathered}
\text{beta-sieve fundamental lemma}
\Longrightarrow \text{truncated second moment}\\
\Longrightarrow\; \LL\text{ normal order}
\Longrightarrow \M\text{ almost everywhere}.
\end{gathered}
\]
The load-bearing analytic input in that truncated $L^2$ estimate is a long-interval dimension-two beta-sieve fundamental lemma for finite systems of forbidden residue classes; the paper states the quoted sieve form and verifies its hypotheses before applying it.  More precisely, for suitable $Y=N^{\beta_N}$ with $\beta_N\to0$ and $\beta_N\log N\to\infty$, we prove an $O(N)$ variance bound for the truncated sum $\sum_{d\le Y}I_d(n)/d$ over $N<n\le2N$, which is more than sufficient for Chebyshev at the natural scale.  This is not a proof of the full untruncated $O(N)$ variance.  The key structural fact is parity: odd gaps give exact negative covariance, while every even-gap singular-series main term is positive and at least $2C_2>1$.  The needed cancellation is therefore even-versus-odd cancellation through a finite singular-series mean identity; absolute-value bounds lose the entire effect.  The full untruncated $O(N)$ variance would additionally require an $L^2$ treatment of the remaining high-level tail, where shifted-prime correlations enter.

The rest of the paper records tools for the exceptional-set and pointwise problem.  First, using the linear sieve at its edge, we prove the explicit pointwise estimate
\begin{equation}\label{eq:pointwise-2-intro}
\M(n)\le (2+\eps)\log\log n+O_\eps(1)
\end{equation}
for every $n$.  The constant $2$ is the honest constant delivered by this method.  Removing it by local counting alone would require prime-interval input of Hardy--Littlewood type; indeed the tempting estimate
\[
|A\cap[n-x,n)|\le \pi(x)+O(x/(\log x)^2)
\]
would imply
\[
\pi(X+Y)\le \pi(X)+\pi(Y)+O(Y/(\log Y)^2),
\]
a conjectural weakened form related to the second Hardy--Littlewood prime-interval problem; see the classical and later discussions in \cite{HardyLittlewood1923,HensleyRichards1974,Richards1974,MontgomeryVaughan1973}, and the contextual Erd\H{o}s Problem \#855 entry \cite{Bloom855}.

We then give a conditional formulation: a visibly conditional Hardy-Littlewood type window-packing hypothesis implies \eqref{eq:erdos-question}.  We also record a quantitative two-sided order-of-magnitude consequence on a positive-density set.  Finally, we include first-hit certificates, divisor-budget replacement criteria, exchange lemmas for forced elements, and CRT examples showing that direct divisor replacement can lose an unbounded amount.

\subsection{Formalisation}\label{sec:formalisation}
The forty named theorem, lemma, proposition, and corollary environments in this paper have been formalised in Lean~4 using mathlib.  The proof development is unconditional relative to Lean's standard classical foundations: it contains no admitted proof or project-defined axiom, and every conditional result is checked as an implication from its explicitly stated hypotheses.  For the quoted beta-sieve sequence corollary, the formal statement makes explicit the divisibility relation implicit in the paper's notation.  The source, reproducible build instructions, statement-by-statement theorem map, and kernel-axiom audit are available in the accompanying repository \cite{LeanFormalisation}.

\section{Sieve inputs and elementary sums}\label{sec:prelim}

For $m\geq2$, let $\Pmin(m)$ denote the least prime factor of $m$, and set $\Pmin(1)=\infty$.  Let
\begin{equation}\label{eq:Rxy-def}
\R(x,y)=\#\{m\leq x:\Pmin(m)>y\}.
\end{equation}
For real $z\ge2$ we write
\[
P(z)=\prod_{p\le z}p,
\]
where products over $p\le z$ are always over primes.  When a divisibility condition $q\mid h$ occurs, we use the usual convention that every integer $q$ divides $h=0$.
We shall use three standard sieve inputs.  The first is the Buchstab--de Bruijn theorem for integers free of small prime factors.  With
\[
u=\frac{\log x}{\log y},
\]
for every fixed $\delta_0>0$ it gives the uniform estimate
\begin{equation}\label{eq:buchstab-standard}
\R(x,y)=\frac{x\omega(u)}{\log y}+O_{\delta_0}\left(\frac{x}{(\log y)^2}+1\right)
\end{equation}
for $2\le y\le x$ with $x\ge y^{1+\delta_0}$.  This is the standard rough-number estimate recorded, in the notation $\Phi(x,y)$, in Tenenbaum \cite[Ch. III, \S6.2, Th. 3; Ch. III, \S6.4, Cor. 20]{Tenenbaum}.  We use \eqref{eq:buchstab-standard} only in ranges of the form $y\le x^\theta$ with fixed $\theta<1$, equivalently where the Buchstab parameter $u$ is bounded below by a number strictly larger than $1$; the cited theorem is uniform in such ranges.  The convention $\Pmin(1)=\infty$ changes the usual count by at most one, which is absorbed by the displayed error.  If one uses a version of the Buchstab--de Bruijn theorem with an additional term such as $O(y/\log y)$, that term is also absorbed in the error term in every application below: we only apply the estimate in fixed-power ranges $y\le x^\theta$ with $\theta<1$, so $y/\log y\ll_{\theta} x/(\log y)^2$ for large $y$.  Real values of $y$ cause no change beyond $O(1)$, since the set of primes $p\le y$ changes only when $y$ crosses a prime.  Here $\omega$ is Buchstab's function \cite{Buchstab}.  We also use the convergence estimate
\begin{equation}\label{eq:buchstab-decay-cited}
\omega(u)-e^{-\gamma}\ll \rho_{\rm D}(u)\qquad (u\ge1),
\end{equation}
where $\rho_{\rm D}$ is the Dickman--de Bruijn function; see Tenenbaum \cite[Ch. III, \S6.3, Th. 8]{Tenenbaum}.  In particular, since $\rho_{\rm D}(u)$ decays super-polynomially,
\begin{equation}\label{eq:buchstab-integral-cited}
\int_1^X\frac{\omega(u)-e^{-\gamma}}u\,du=O(1)
\end{equation}
uniformly in $X\ge1$.  The second input is the Rosser--Iwaniec linear sieve, in the form of Iwaniec and Kowalski \cite[Theorem 11.13]{IwaniecKowalski}.  The third is the one-dimensional Selberg upper-bound sieve; see Halberstam and Richert \cite[Chapter 2]{HalberstamRichert}.  The beta-sieve fundamental lemma with an exponential relative error in the sieve parameter is used in Section \ref{sec:distribution}; we cite Friedlander--Iwaniec \cite[Ch. 6, Fundamental Lemma]{FriedlanderIwaniec} or Iwaniec--Kowalski \cite[Theorem 6.9]{IwaniecKowalski}.  The uniformity in Lemma \ref{lem:rough}(i), namely $x\ge y^{U_0}$ with $y\to\infty$, is load-bearing in the proof of Theorem \ref{thm:main}.

\begin{lemma}[Buchstab--de Bruijn rough-number estimates]\label{lem:rough}
The following estimates hold.
\begin{enumerate}[label=\textup{(\roman*)}]
\item For every $\eps>0$ there are $U_0=U_0(\eps)$ and $y_0=y_0(\eps)$ such that, uniformly for $y\geq y_0$ and $x\geq y^{U_0}$,
\begin{equation}\label{eq:rough-asymp}
(e^{-\gamma}-\eps)\frac{x}{\log y}
\leq \R(x,y)
\leq
(e^{-\gamma}+\eps)\frac{x}{\log y}.
\end{equation}
Equivalently, this is the consequence of \eqref{eq:buchstab-standard} obtained from $u\ge U_0$ and $\omega(u)\to e^{-\gamma}$.
\item Uniformly for $x\geq1$ and $y\geq2$,
\begin{equation}\label{eq:rough-upper-global}
\R(x,y)\ll \frac{x}{\log y}+1.
\end{equation}
\end{enumerate}
\end{lemma}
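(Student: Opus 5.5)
For part (i) I would derive everything from the quoted Buchstab--de Bruijn estimate \eqref{eq:buchstab-standard} together with the convergence $\omega(u)\to e^{-\gamma}$ from \eqref{eq:buchstab-decay-cited}. Fix $\eps>0$. Since $\rho_{\rm D}(u)\to0$, I choose $U_0\ge2$ with $|\omega(u)-e^{-\gamma}|\le\eps/2$ for all $u\ge U_0$. Given $y\ge 2$ and $x\ge y^{U_0}$, put $u=\log x/\log y\ge U_0$. Then $x\ge y^{1+\delta_0}$ holds with $\delta_0=1$, so \eqref{eq:buchstab-standard} applies with an absolute implied constant $C_1$. This gives
\[
\Bigl|\R(x,y)-\frac{e^{-\gamma}x}{\log y}\Bigr|\le \frac{\eps}{2}\frac{x}{\log y}+C_1\Bigl(\frac{x}{(\log y)^2}+1\Bigr).
\]

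The two error terms are then made smaller than $\tfrac{\eps}{2}\,x/\log y$ by taking $y$ large. First, $C_1x/(\log y)^2\le \tfrac{\eps}{4}x/\log y$ once $\log y\ge 4C_1/\eps$. Second, $C_1\le \tfrac{\eps}{4}x/\log y$ follows from $x\ge y^{2}\ge y$ and $y/\log y\ge 4C_1/\eps$ for $y\ge y_0(\eps)$. Combining these yields \eqref{eq:rough-asymp}.

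One point needs care. The statement of \eqref{eq:buchstab-standard} covers the whole range $x\ge y^{1+\delta_0}$, so unbounded $u$ is allowed. The paper's remark about ``$y\le x^\theta$'' is exactly this range with $\theta=1/U_0$, so uniformity in $x\to\infty$ with $y$ fixed-but-large is supplied by the cited theorem, and I need not reprove it. This uniformity is the only real subtlety of (i).

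For part (ii) I would avoid the asymptotic and argue by cases. If $y>x$, then every $m\le x$ counted by $\R(x,y)$ with $m\ge2$ has a prime factor $\le m\le x<y$, so only $m=1$ survives and $\R(x,y)\le1$. If $2\le y\le x$, I would apply the one-dimensional Selberg upper-bound sieve (or even the Brun--Eratosthenes bound) to the interval $[1,x]$ with sifting range $z=\min(y,\sqrt x)$. Since $\R(x,y)\le\R(x,z)$, this gives
\[
\R(x,z)\ll x\prod_{p\le z}\Bigl(1-\frac1p\Bigr)+z^2\ll \frac{x}{\log z}+x.
\]
To get a useful bound I take $z=\min(y,x^{1/3})$ instead, so that the remainder is $z^2\le x^{2/3}$. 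This is $\ll x/\log x$, which in turn is $\le x/\log z$.

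It remains to handle the case $z=x^{1/3}<y$. There $\log z\asymp\log x\ge\log y$, so $x/\log z\ll x/\log y$. For bounded $x$ the trivial bound $\R\le x\ll 1$ suffices. Mertens' theorem converts the product into $1/\log z$, which completes \eqref{eq:rough-upper-global}.

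I expect no genuine obstacle. The only step needing care is matching the hypotheses and ranges of the quoted estimates, namely the uniformity of \eqref{eq:buchstab-standard} for unbounded $u$ and the absorption of the $+1$ and sieve-remainder terms.
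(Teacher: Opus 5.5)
Your proof is correct and follows the paper's route. Part (i) is exactly the deduction the paper indicates from \eqref{eq:buchstab-standard} with $u\ge U_0$ and $\omega(u)\to e^{-\gamma}$, with the $x/(\log y)^2$ and $+1$ errors absorbed by enlarging $y_0$; part (ii) is the standard one-dimensional upper-bound sieve estimate the paper invokes, and your choice $z=\min(y,x^{1/3})$ correctly disposes of the remainder (the parenthetical fallback to an Eratosthenes-type bound would not work at $z=x^{1/3}$, but the Selberg sieve you actually use does).
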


\begin{lemma}[Linear sieve in short intervals]\label{lem:linear-sieve}
Fix $\alpha\in(1/3,1/2)$ and $\eps>0$.  Uniformly for all sufficiently large $D$, all intervals $I\subset[D,2D]$ with $|I|\asymp D$, and all integers $b$,
\begin{equation}\label{eq:linear-sieve-interval}
\#\{r\in I: p\nmid b-r\text{ for every prime }p\le D^\alpha\}
\le (2+\eps)\frac{|I|}{\log D}.
\end{equation}
The threshold for ``sufficiently large'' may depend on $\alpha$, $\eps$, and the implicit constants in $|I|\asymp D$.
\end{lemma}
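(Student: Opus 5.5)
We deduce Lemma~\ref{lem:linear-sieve} from the upper-bound half of the Rosser--Iwaniec linear sieve \cite[Theorem 11.13]{IwaniecKowalski}, applied to the sequence $\mathcal A=\{b-r:r\in I\}$ sifted by all primes $p\le z:=D^\alpha$. If $I$ has length $x:=|I|$, then for each squarefree $q\mid P(z)$ the condition $q\mid b-r$ confines $r$ to a single residue class modulo $q$. Hence
\[
\#\{r\in I: q\mid b-r\}=\frac{x}{q}+r_q,\qquad |r_q|\le1 .
\]
So the density is $g(q)=1/q$, the sieve dimension is $\kappa=1$, and $V(z)=\prod_{p\le z}(1-1/p)$. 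The linear sieve upper bound with level of distribution $\mathcal D$ gives
\[
S(\mathcal A,z)\le xV(z)\bigl(F(s)+O((\log\mathcal D)^{-1/3})\bigr)+\sum_{q<\mathcal D,\ q\mid P(z)}|r_q|,
\qquad s=\frac{\log\mathcal D}{\log z}.
\]

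We choose the level $\mathcal D=D^{1-\delta}$ with $\delta>0$ small in terms of $\eps$ and $\alpha$. The remainder sum is then at most $\mathcal D=D^{1-\delta}$. Since $|I|\asymp D$, this is $o(|I|/\log D)$, with the implied rate depending on the constants in $|I|\asymp D$. With this choice
\[
s=\frac{1-\delta}{\alpha}.
\]
Because $\alpha<1/2$, we have $s>2$ once $\delta<1-2\alpha$. Because $\alpha>1/3$, we also have $s<3$ for every $\delta>0$. In the range $0<s\le3$ the linear sieve upper function is exactly $F(s)=2e^\gamma/s$, since the iterative formula for $F$ only changes at $s=3$. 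This is precisely why the hypothesis $\alpha\in(1/3,1/2)$ appears.

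By Mertens' theorem, $V(z)=\bigl(e^{-\gamma}+o(1)\bigr)/\log z=\bigl(e^{-\gamma}+o(1)\bigr)/(\alpha\log D)$. Combining this with the value of $F(s)$ gives
\[
xV(z)F(s)=\bigl(1+o(1)\bigr)\frac{2x}{s\alpha\log D}=\bigl(1+o(1)\bigr)\frac{2}{1-\delta}\cdot\frac{x}{\log D}.
\]
We fix $\delta$ so that $2/(1-\delta)<2+\eps/2$. We then take $D$ large enough that the $o(1)$ terms, the $O((\log\mathcal D)^{-1/3})$ error and the remainder $D^{1-\delta}$ together cost at most $(\eps/2)|I|/\log D$. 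This yields \eqref{eq:linear-sieve-interval}.

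The conclusion is uniform in $b$, because only the residue of $b$ enters the local densities and the remainders are bounded by $1$ regardless of $b$. It is uniform in $I$, because only $|I|$ and the lower bound $|I|\gg D$ are used. The main point requiring care is the exact form of the cited theorem, namely that the main term is $F(s)+O((\log\mathcal D)^{-1/3})$ uniformly for $s\ge1$. We also need to confirm that the choice $s\in(2,3)$ really gives $F(s)=2e^\gamma/s$ and not a larger value. Both follow from the standard definition of $F$ via $sF(s)=2e^\gamma$ for $s\le3$, so we expect no genuine obstacle beyond bookkeeping of the constants.
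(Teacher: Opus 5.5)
Your proposal is correct and follows the same route as the paper: you apply the Rosser--Iwaniec upper linear sieve with one excluded residue class per prime $p\le D^\alpha$, use $F(s)=2e^\gamma/s$ for $s<3$, and use Mertens for $V(z)$, so that $\alpha$ cancels. The only difference is the choice of level. You take $D^{1-\delta}$, giving $s=(1-\delta)/\alpha$ and a factor $2/(1-\delta)$ that you absorb into $\eps$; the paper takes $Q=|I|/(\log|I|)^B$, so $s=1/\alpha+o(1)$ and the constant $2+o(1)$ comes out directly.
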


\begin{proof}
We give the standard verification because the pointwise constant in Proposition \ref{prop:pointwise-two} rests on this input.  Let
\[
X=|I|,
\qquad z=D^\alpha,
\qquad Q=\frac{X}{(\log X)^B},
\]
where $B$ is fixed and large.  For every squarefree $q\le Q$ with prime factors at most $z$,
\[
\#\{r\in I:r\equiv b\pmod q\}=\frac{X}{q}+R_q,
\qquad |R_q|\le 1.
\]
Hence
\[
\sum_{q\le Q}|R_q|=O(Q)=o(X/\log D),
\]
uniformly in the residue class $b\pmod q$ and therefore uniformly in the integer $n$ used later.

Apply the upper-bound Rosser--Iwaniec linear sieve, for instance Iwaniec--Kowalski \cite[Theorem 11.13]{IwaniecKowalski}.  In the normalization used there, if a sequence of dimension $1$ has level $Q$, sieve level $z$, local density $1/p$ at each prime, and total remainder $o(XV(z))$, then its sifted part is at most
\[
X V(z)\{F(s)+o(1)\},\qquad s=\frac{\log Q}{\log z},
\]
where $F(s)=2e^\gamma/s$ for $2<s<3$.  We apply this theorem to the sequence of integers in $I$, excluding one residue class modulo each prime $p\le z$.  With
\[
s=\frac{\log Q}{\log z}=\frac1\alpha+o(1),
\]
we have $2<s<3$ for all large $D$, because $1/3<\alpha<1/2$.  In this range the upper linear-sieve function satisfies
\[
F(s)=\frac{2e^\gamma}{s}.
\]
Also Mertens' theorem gives
\[
V(z)=\prod_{p\le z}\left(1-\frac1p\right)
=(e^{-\gamma}+o(1))\frac1{\log z}
=(e^{-\gamma}+o(1))\frac1{\alpha\log D}.
\]
The linear-sieve bound therefore gives
\begin{align*}
X\,V(z)F(s)+o(X/\log D)
&= X\left(\frac{e^{-\gamma}+o(1)}{\alpha\log D}\right)
   \left(\frac{2e^\gamma}{1/\alpha+o(1)}\right)+o(X/\log D)\\
&=(2+o(1))\frac{X}{\log D}.
\end{align*}
The cancellation of $\alpha$ is important: within the admissible range $1/3<\alpha<1/2$, moving $\alpha$ does not improve the leading constant.  The convergence of the small-prime part in Proposition \ref{prop:pointwise-two} only requires $\alpha<1$.  The stricter condition $\alpha<1/2$ is imposed here to keep the sieve parameter $s>2$, while $\alpha>1/3$ keeps $s<3$ so that the displayed edge formula for $F$ is the relevant one.  Absorbing the $o(1)$ into $\eps$ proves \eqref{eq:linear-sieve-interval}.
\end{proof}

\begin{lemma}[Crude interval upper-bound sieve]\label{lem:crude-interval-sieve}
Uniformly for intervals $I$ of length $L\ge2$, integers $b$, and $2\le z\le L$,
\begin{equation}\label{eq:crude-interval-sieve}
\#\{m\in I:p\nmid b-m\text{ for every prime }p\le z\}\ll \frac{L}{\log z}.
\end{equation}
The implied constant is absolute.
\end{lemma}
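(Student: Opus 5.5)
Our plan is to prove \eqref{eq:crude-interval-sieve} with the one-dimensional Selberg upper-bound sieve \cite[Chapter 2]{HalberstamRichert}, choosing the sifting range so small relative to $L$ that the remainder terms are negligible. The sifted count cannot increase if we shrink the set of sifting primes. We therefore replace $z$ by
\[
z_1=\min\{z,\,L^{1/4}\}.
\]
For $z\le L$ we have $\log z_1\ge \tfrac14\log z$. So it suffices to prove the bound $\ll L/\log z_1$ with sifting primes $p\le z_1$. If $z_1<2$, or more generally if $z_1$ is bounded, the trivial bound $\#I\le L+1\ll L\ll L/\log z$ suffices, because $\log z\ll 1$ in that range ($z\le L$, and $z_1$ bounded with $z_1=\min(z,L^{1/4})$ forces either $z$ bounded or $L$ bounded). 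Hence we may assume $z_1\ge2$ is large.

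The sifting problem is the set $\mathcal A=\{m\in I\}$. For each prime $p\le z_1$ we remove the single residue class $m\equiv b\pmod p$. For squarefree $d\mid P(z_1)$, the number of $m\in I$ with $m\equiv b\pmod d$ equals $L/d+r_d$ with $|r_d|\le1$; this holds since $d$ has exactly one residue class to remove and $I$ has length $L$. The density function is $g(p)=1/p$, so the problem has dimension one with the standard local condition. Selberg's upper bound with level $D=z_1^2$ gives
\[
S(\mathcal A,z_1)\le \frac{L}{G(z_1)}+\sum_{\substack{d\le z_1^2\\ d\mid P(z_1)}}3^{\omega(d)}|r_d|,
\qquad G(z_1)=\sum_{\substack{d<z_1\\ d\mid P(z_1)}}\frac{\mu^2(d)}{\varphi(d)}\ge \log z_1 .
\]
Here we use the classical estimate $G(z_1)\ge\log z_1$, which holds because $\sum_{d<z_1}\mu^2(d)/\varphi(d)\ge\sum_{k<z_1}1/k$.

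The remainder is at most $\sum_{d\le z_1^2}3^{\omega(d)}\ll z_1^2(\log z_1)^2$. Since $z_1\le L^{1/4}$, this is $\ll L^{1/2}(\log L)^2\ll L/\log z_1$. Combining the main term and the remainder gives $S\ll L/\log z_1\ll L/\log z$, with absolute constants throughout; uniformity in $b$ is automatic because $|r_d|\le1$ for every residue class.

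The only point needing care is bookkeeping rather than difficulty. We must make sure the reduction from $z$ to $z_1$ only loses a factor of $4$, and that the bounded-$z$ cases are absorbed into the absolute constant. This is the step we would check most carefully. An alternative is to use the large sieve ($\#\le (L+Q^2)/\sum_{q\le Q}\mu^2(q)/\varphi(q)$ restricted to $q\mid P(z)$ with $Q=z_1$), which gives the same bound.
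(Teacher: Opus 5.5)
Your proof is correct and is essentially the paper's argument: both use monotonicity in the sifting range to cap $z$ at a power of $L$ (you cap at $L^{1/4}$, the paper at $L/(\log L)^B$) and then apply the standard one-dimensional upper-bound sieve, which the paper explicitly allows to be the Selberg sieve of Halberstam--Richert. One wording slip: shrinking the set of sifting primes means the sifted count cannot \emph{decrease}, and that is the inequality you actually use.
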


\begin{proof}
This is the standard one-dimensional upper-bound sieve.  If $z\le L/(\log L)^B$, apply the Rosser--Iwaniec upper sieve with level $Q=L/(\log L)^B$ as in the proof of Lemma \ref{lem:linear-sieve}; here the upper-sieve function is bounded for $s=\log Q/\log z\ge1+o(1)$, and $\sum_{q\le Q}|R_q|=O(Q)$.  This gives $O(L/\log z)$.  If $L/(\log L)^B<z\le L$, then by monotonicity in the sifting range the sifted set is bounded by the same estimate with $z$ replaced by $L/(\log L)^B$, which is $O(L/\log L)=O(L/\log z)$.  Equivalently, one may quote the one-dimensional Selberg upper-bound sieve of Halberstam--Richert \cite[Chapter 2]{HalberstamRichert}.  The residue class is arbitrary throughout, so the estimate is uniform in $b$.
\end{proof}

We also use the standard sums
\begin{align}
\sum_{p\le x}\frac1p&=\log\log x+B_1+O(1/\log x),\label{eq:mertens-prime}\\
\sum_p\frac1{p\log p}&<\infty,\label{eq:prime-p-logp}\\
\sum_{3\le m\le X}\frac1{m\log m}&=\log\log X+O(1),\label{eq:int-loglog}\\
\sum_{16\le m\le X}\frac1{m\log(m/(2\log m))}&=\log\log X+O(1).\label{eq:z-m-sum}
\end{align}
The last identity follows from
\[
\frac1{\log(m/(2\log m))}=\frac1{\log m}+O\left(\frac{\log\log m}{(\log m)^2}\right).
\]

\begin{lemma}[Buchstab summation away from the transition range]\label{lem:buchstab-summation}
Let $z_d=d/(2\log d)$, and fix $0<\delta<1/4$.  As $N\to\infty$,
\begin{align}
\sum_{3\le d\le N^\delta}\frac{N-d}{d\log d}\,
\omega\!\left(\frac{\log(N-d)}{\log d}\right)
&=e^{-\gamma}N\log\log N+O_\delta(N),\label{eq:buchstab-sum-d}\\
\sum_{16\le d\le N^\delta}\frac{N-d}{d\log z_d}\,
\omega\!\left(\frac{\log(N-d)}{\log z_d}\right)
&=e^{-\gamma}N\log\log N+O_\delta(N).\label{eq:buchstab-sum-z}
\end{align}
Moreover
\begin{align}
\sum_{N^\delta<d\le N/2}\frac{N}{d\log d}&=O_\delta(N),\label{eq:buchstab-transition-d}\\
\sum_{N^\delta<d\le N/2}\frac{N}{d\log z_d}&=O_\delta(N).\label{eq:buchstab-transition-z}
\end{align}
\end{lemma}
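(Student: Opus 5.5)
The plan is to write $\omega(u)=e^{-\gamma}+(\omega(u)-e^{-\gamma})$ in each main sum, evaluate the $e^{-\gamma}$ part by the elementary sums \eqref{eq:int-loglog} and \eqref{eq:z-m-sum}, and show that the error part is $O_\delta(N)$ using \eqref{eq:buchstab-decay-cited}. The transition-range bounds \eqref{eq:buchstab-transition-d} and \eqref{eq:buchstab-transition-z} are elementary and I will treat them first.

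\emph{Transition range.} Since $\log z_d=\log d-\log(2\log d)\ge \tfrac12\log d$ for $d\ge16$, it suffices to bound
\[
\sum_{N^\delta<d\le N/2}\frac{N}{d\log d}.
\]
By \eqref{eq:int-loglog} this equals
\[
N\bigl(\log\log(N/2)-\log\log N^\delta+O(1)\bigr)=N\bigl(\log(1/\delta)+O(1)\bigr)=O_\delta(N).
\]

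\emph{Main term of \eqref{eq:buchstab-sum-d}.} Write $N-d=N(1+O(N^{\delta-1}))$ for $d\le N^\delta$. Then
\[
\sum_{3\le d\le N^\delta}\frac{N-d}{d\log d}
=N\sum_{3\le d\le N^\delta}\frac{1}{d\log d}-\sum_{d\le N^\delta}\frac1{\log d}.
\]
The first term is $N(\log\log N+\log\delta+O(1))=N\log\log N+O_\delta(N)$. The subtracted term is $O(N^\delta)$. Multiplying by $e^{-\gamma}$ gives the main term. The same computation with \eqref{eq:z-m-sum} handles the $z_d$ version.

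\emph{Error part of \eqref{eq:buchstab-sum-d}.} I need
\[
N\sum_{3\le d\le N^\delta}\frac{|\omega(u_d)-e^{-\gamma}|}{d\log d}=O_\delta(N),\qquad u_d=\frac{\log(N-d)}{\log d}.
\]
Note that $u_d\ge (1-o(1))/\delta>4$, and $u_d$ is a decreasing function of $d$. I would compare the sum with an integral via the substitution $t=\log d$, so that $dt=\mathrm{d}d/d$ and $u=\log(N-d)/t\approx \log N/t$. This gives $du/u=-dt/t$ up to negligible corrections, so
\[
\sum\frac{g(u_d)}{d\log d}\approx\int\frac{g(u)}{u}\,du,\qquad g(u)=|\omega(u)-e^{-\gamma}|\ll\rho_{\rm D}(u).
\]
The integral is over $u\in[1/\delta,\,\log N/\log 3]$ and is bounded by $\int_1^\infty \rho_{\rm D}(u)\,du/u<\infty$, in the spirit of \eqref{eq:buchstab-integral-cited}. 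Simpler still, one can bound crudely:
\[
\sum_{d\le N^\delta}\frac{\rho_{\rm D}(u_d)}{d\log d}\ll\sum_{d}\frac{1}{d\log d}\Bigl(\frac{\log d}{\log N}\Bigr)^{2}
\ll \frac{1}{(\log N)^2}\sum_{d\le N^\delta}\frac{\log d}{d}\ll \delta^2,
\]
using $\rho_{\rm D}(u)\ll u^{-2}$ and $u_d\ge\tfrac12\log N/\log d$. This crude route avoids any sum–integral comparison and gives $O(N)$ directly.

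For the $z_d$ version, $u=\log(N-d)/\log z_d\ge \log N/(2\log d)$ is still large, so the same $u^{-2}$ bound applies. There is also the weight $1/\log z_d\le 2/\log d$, and small $d$ (where $z_d$ is of bounded size) contribute only $O(N)$.

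The main obstacle, mild here, is the uniformity of \eqref{eq:buchstab-decay-cited} over the full range $u\ge1/\delta$. Since we only need the polynomial decay $\rho_{\rm D}(u)\ll u^{-2}$, and $u$ is bounded away from $1$, this poses no difficulty. The only care needed is tracking the $O(N^\delta)$ discrepancies coming from $N-d$ versus $N$.
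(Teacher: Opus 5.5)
Your proof is correct, and it takes a more elementary route than the paper.

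The paper never separates off the constant $e^{-\gamma}$. Instead it does the following:
\begin{enumerate}
\item It converts $\sum_d \omega(\log N/\log d)/(d\log d)$ by partial summation into $\int \omega(U/t)\,dt/t$ with $U=\log N$. This needs a bounded-variation property of $t\mapsto\omega(U/t)/t$, derived from Buchstab's differential equation.
\item It substitutes $u=U/t$ and applies \eqref{eq:buchstab-integral-cited}.
\item It then needs a separate Lipschitz estimate for $\omega$ to pass from $\log N$ to $\log(N-d)$.
\item For the $z_d$ sum it needs a change of variables $v=\log z_d$ with Jacobian $1+O(1/v)$.
\end{enumerate}

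You split $\omega=e^{-\gamma}+(\omega-e^{-\gamma})$ first. The main term is then literally the elementary sums \eqref{eq:int-loglog} and \eqref{eq:z-m-sum}. The remainder you bound termwise in absolute value at the true argument $\log(N-d)/\log d$ (or $\log(N-d)/\log z_d$). So you need no regularity of $\omega$, no sum--integral comparison, and no $\log N\to\log(N-d)$ perturbation. In fact even $|\omega(u)-e^{-\gamma}|\ll u^{-1}$ would suffice for you, giving a remainder $\ll\delta N+N/\log N$.

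What the paper's route buys in principle is that it keeps the signed integral $\int(\omega(u)-e^{-\gamma})\,du/u$ rather than discarding it. That is what one would refine to identify the actual constant in the $O(N)$ term, which is the secondary-term question raised at the end of the paper. Your absolute bound cannot see that constant, but the lemma does not ask for it.

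One small slip: $\log z_d\ge\frac12\log d$ is equivalent to $\sqrt d\ge 2\log d$, which fails for $16\le d\le 74$; for example $\log z_{16}\approx 1.06<\frac12\log 16$. Since $z_d$ is increasing for $d\ge16$ and $z_{16}>e$, you still have $\log z_d\asymp\log d$ on all of $d\ge16$. In the transition range $d>N^\delta$ the inequality holds outright. So none of your estimates is affected.
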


\begin{proof}
We first prove \eqref{eq:buchstab-sum-d}.  Since $d\le N^\delta$ and $\delta<1$, the Buchstab parameter
$\log(N-d)/\log d$ is bounded below by a constant greater than $1$.  Put $U=\log N$.  We shall use the following elementary regularity consequence of Buchstab's differential equation.  For fixed $\eta>0$, the function $u\omega(u)$ has bounded derivative on each interval of continuity in $[1+\eta,\infty)$ and total variation $O_\eta(U)$ on $[1+\eta,U]$; indeed $(u\omega(u))'=\omega(u-1)$ for $u>2$, while the remaining compact interval is harmless.  Hence $t\mapsto \omega(U/t)/t=(U/t)\omega(U/t)/U$ has total variation $O_\delta(1)$ on $[O(1),\delta U]$, since $U/t\ge1/\delta+O(1)$ there.
By partial summation, using $\sum_{m\le x}1/m=\log x+\gamma+O(1/x)$, one has
\[
\sum_{3\le d\le N^\delta}\frac1{d\log d}
\omega\!\left(\frac{\log N}{\log d}\right)
=
\int_{O(1)}^{\delta U}\frac1t\omega\!\left(\frac Ut\right)dt+O_\delta(1).
\]
With $u=U/t$, this integral is
\[
\int_{1/\delta}^{U/O(1)}\frac{\omega(u)}u\,du+O_\delta(1)
=e^{-\gamma}\log U+O_\delta(1)
=e^{-\gamma}\log\log N+O_\delta(1),
\]
where the penultimate equality uses \eqref{eq:buchstab-integral-cited}.  It remains to replace $\log N$ by $\log(N-d)$ and $N$ by $N-d$.  Since
\[
\log(N-d)=\log N+O(N^{\delta-1})
\]
and the arguments of $\omega$ remain in a compact subset of $(1,\infty)$ together with a ray to infinity, Buchstab's differential equation and boundedness of $\omega$ give
\[
\left|\omega\!\left(\frac{\log(N-d)}{\log d}\right)-
\omega\!\left(\frac{\log N}{\log d}\right)\right|
\ll_\delta \frac{N^{\delta-1}}{\log d}.
\]
After multiplying by $N/(d\log d)$ and summing over $d\le N^\delta$, this contributes $O_\delta(N^\delta)=O_\delta(N)$.  Replacing the factor $N$ by $N-d$ contributes
\[
\sum_{d\le N^\delta}\frac{1}{\log d}=O_\delta(N^\delta)=O_\delta(N).
\]
This proves \eqref{eq:buchstab-sum-d}.

We now prove \eqref{eq:buchstab-sum-z}; this is the only point where the change from $d$ to $z_d=d/(2\log d)$ requires care.  Write
\[
t=\log d,\qquad v=\log z_d=t-\log(2t).
\]
For $d\ge16$, $v\asymp t$ and
\[
\frac{dv}{dt}=1-\frac1t,\qquad
 dt=\left(1+O\!\left(\frac1v\right)\right)dv.
\]
A partial-summation argument exactly as above gives
\[
\sum_{16\le d\le N^\delta}\frac1{d\log z_d}
\omega\!\left(\frac{\log N}{\log z_d}\right)
=
\int_{v_0}^{\delta U+O(\log U)}
\frac{\omega(U/v)}{v}\left(1+O\!\left(\frac1v\right)\right)dv
+O_\delta(1),
\]
where $v_0>0$ is absolute.  The error integral is bounded by $\int_{v_0}^\infty v^{-2}\,dv$, since $\omega$ is bounded.  In the main integral, the change of variables $u=U/v$ gives
\[
\int_{1/\delta+O((\log U)/U)}^{U/v_0}\frac{\omega(u)}u\,du+O_\delta(1)
=e^{-\gamma}\log U+O_\delta(1).
\]
The perturbation from $\log N$ to $\log(N-d)$ is bounded as in the first part, now with $\log z_d\asymp\log d$, and contributes $O_\delta(N)$.  Replacing $N$ by $N-d$ contributes $O_\delta(N)$ as well.  This proves \eqref{eq:buchstab-sum-z}.

Finally, \eqref{eq:buchstab-transition-d} and \eqref{eq:buchstab-transition-z} follow from integral comparison:
\[
N\sum_{N^\delta<d\le N/2}\frac1{d\log d}
=N\log(1/\delta)+O(N),
\]
and the same estimate with $\log z_d$ in place of $\log d$, since $\log z_d=\log d+O(\log\log d)$ in the transition range.
\end{proof}

\section{The mean sandwich: self-rough lower bound and bounded-cost upper bound}\label{sec:main-proof}

For $n\geq2$ define
\[
A_R(n)=\{n-d:1\le d<n,\ \Pmin(n-d)>d\}.
\]

\begin{lemma}[Self-rough construction]\label{lem:self-rough}
The set $A_R(n)$ is pairwise coprime.  Consequently,
\begin{equation}\label{eq:L-le-M}
\LL(n)\le \M(n).
\end{equation}
\end{lemma}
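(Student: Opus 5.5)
The plan is a direct divisibility argument. Take two distinct elements $a=n-d$ and $a'=n-d'$ of $A_R(n)$, and assume without loss of generality that $1\le d<d'<n$. Suppose a prime $p$ divides both $a$ and $a'$; I will derive a contradiction.

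\emph{Step 1: bound $p$ by the gap.} Since $p$ divides both $a$ and $a'$, it divides their difference
\[
a-a'=d'-d.
\]
This difference is a positive integer less than $d'$, so $p\le d'-d<d'$.

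\emph{Step 2: compare with the roughness condition.} The prime $p$ divides $a'=n-d'$, so $a'\ge2$ and $\Pmin(a')\le p$. But membership of $a'$ in $A_R(n)$ gives $\Pmin(n-d')>d'$. Combining,
\[
d'<\Pmin(a')\le p<d',
\]
which is impossible. Hence $(a,a')=1$. The case $a'=1$ needs no separate treatment: the convention $\Pmin(1)=\infty$ is consistent, and $1$ is coprime to everything.

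\emph{Step 3: deduce \eqref{eq:L-le-M}.} Now $A_R(n)\subset[1,n)$ is pairwise coprime, so it is admissible in the supremum \eqref{eq:M-def}. The map $d\mapsto n-d$ is a bijection from $\{1\le d<n:\Pmin(n-d)>d\}$ onto $A_R(n)$, and under it $1/(n-a)=1/d$. Therefore
\[
\sum_{a\in A_R(n)}\frac1{n-a}=\LL(n)\le\M(n).
\]

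There is no real obstacle. The one point needing care is to apply the roughness condition to the element with the \emph{larger} shift $d'$, since only that gives $p\le d'-d<d'$. Using the smaller shift $d$ would only give $p\le d'-d$, which need not be less than $d$.
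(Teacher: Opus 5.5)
Your proof is correct and follows the paper's own argument: a common prime divides the gap $d'-d<d'$, which contradicts the roughness condition $\Pmin(n-d')>d'$ for the element with the larger shift. Admissibility of $A_R(n)$ in the supremum then gives $\LL(n)\le\M(n)$.
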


\begin{proof}
Let $d_1<d_2$, and suppose that a prime $q$ divides both $n-d_1$ and $n-d_2$.  Then
\[
q\mid (n-d_1)-(n-d_2)=d_2-d_1,
\]
so $q<d_2$.  But $q\mid n-d_2$, contradicting $\Pmin(n-d_2)>d_2$.
\end{proof}

\begin{proposition}[Average lower bound]\label{prop:lower}
As $N\to\infty$,
\begin{equation}\label{eq:L-average-lower}
\sum_{n\le N}\LL(n)=e^{-\gamma}N\log\log N+O(N).
\end{equation}
In particular,
\[
\sum_{n\le N}\M(n)\ge e^{-\gamma}N\log\log N+O(N).
\]
\end{proposition}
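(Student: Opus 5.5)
We will swap the order of summation and reduce the claim to rough-number counts.
\[
\sum_{n\le N}\LL(n)=\sum_{1\le d<N}\frac1d\,\#\{d<n\le N:\Pmin(n-d)>d\}
=\sum_{1\le d<N}\frac1d\,\R(N-d,d),
\]
since $m=n-d$ runs over $1\le m\le N-d$. The $m=1$ term, coming from the convention $\Pmin(1)=\infty$, contributes $\sum_d 1/d=O(\log N)$ in total and is harmless. The second assertion then follows at once from Lemma \ref{lem:self-rough}, since $\LL(n)\le\M(n)$ pointwise. So everything rests on evaluating this weighted sum of $\R(N-d,d)$.

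We split the range of $d$ at a fixed $\delta\in(0,1/4)$, say $\delta=1/8$.

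\textbf{Small $d$ ($3\le d\le N^\delta$).} Here $N-d\ge d^{1/\delta-o(1)}$, so the Buchstab parameter is bounded below by a constant exceeding $1$. We therefore apply \eqref{eq:buchstab-standard}:
\[
\R(N-d,d)=\frac{(N-d)\,\omega\!\left(\frac{\log(N-d)}{\log d}\right)}{\log d}+O\!\left(\frac{N}{(\log d)^2}+1\right).
\]
We divide by $d$ and sum.
\begin{itemize}[label=--]
\item The main term is exactly the left side of \eqref{eq:buchstab-sum-d}, giving $e^{-\gamma}N\log\log N+O(N)$.
\item The error term contributes $N\sum_{d\ge3}1/(d(\log d)^2)+O(\log N)=O(N)$.
\item The values $d=1,2$ contribute at most $2N$.
\end{itemize}

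\textbf{Large $d$ ($N^\delta<d<N$).} We need only an upper bound, since all terms are nonnegative. Lemma \ref{lem:rough}(ii) gives $\R(N-d,d)\ll N/\log d+1$.
\begin{itemize}[label=--]
\item For $N^\delta<d\le N/2$, the sum of $N/(d\log d)$ is $O_\delta(N)$ by \eqref{eq:buchstab-transition-d}.
\item For $N/2<d<N$, we have $\R(N-d,d)\ll N/\log N+1$, and $\sum_{N/2<d<N}1/d\ll1$, so this range contributes $O(N/\log N+1)$.
\item The $+1$ terms contribute $O(\log N)$ in total.
\end{itemize}
Combining the two ranges gives \eqref{eq:L-average-lower}.

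The only step needing care is the small-$d$ range. There one must check that \eqref{eq:buchstab-standard} applies uniformly, with $x=N-d\ge y^{1/\delta'}$ for some fixed $\delta'<1$, so that its error term is uniform in $d$. The identification of the main term is already done by Lemma \ref{lem:buchstab-summation}, so I expect no genuine obstacle.
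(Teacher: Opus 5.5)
Your proposal is correct and follows the paper's proof essentially step for step: the same substitution $m=n-d$ giving $\sum_{d<N}\R(N-d,d)/d$, the same split at $N^\delta$ using \eqref{eq:buchstab-standard} and Lemma~\ref{lem:buchstab-summation} for small $d$, and \eqref{eq:rough-upper-global} with \eqref{eq:buchstab-transition-d} for the transition range. The only differences are cosmetic and both are valid: you start the Buchstab range at $d=3$ instead of at a threshold $d_0(\delta)$, and you handle $d>N/2$ via Lemma~\ref{lem:rough}(ii) instead of the trivial bound $\R(N-d,d)\le N-d+1$.
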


\begin{proof}
Changing variables $m=n-d$ gives
\begin{equation}\label{eq:L-average-exact}
\sum_{n\le N}\LL(n)=\sum_{d<N}\frac1d\R(N-d,d).
\end{equation}
Fix once and for all a small number $0<\delta<1/4$.  For $d>N/2$ the contribution is $O(N)$, since $\R(N-d,d)\le N-d+1$.  For $N^\delta<d\le N/2$, the global upper-bound sieve estimate \eqref{eq:rough-upper-global} gives
\[
\sum_{N^\delta<d\le N/2}\frac1d\R(N-d,d)
\ll N\sum_{N^\delta<d\le N/2}\frac1{d\log d}=O_\delta(N).
\]
Choose $d_0=d_0(\delta)\ge3$ so that the quoted Buchstab--de Bruijn estimate is valid for all $d\ge d_0$ in the range below.  The finitely many terms $1\le d<d_0$ contribute $O_\delta(N)$ trivially.  For $d_0\le d\le N^\delta$ the Buchstab parameter is bounded below by a constant greater than $1$, so \eqref{eq:buchstab-standard} applies uniformly and gives
\[
\R(N-d,d)=
\frac{(N-d)\omega(\log(N-d)/\log d)}{\log d}
+O_\delta\left(\frac{N}{(\log d)^2}+1\right).
\]
The accumulated error, after multiplication by $1/d$ and summation over $d\le N^\delta$, is $O_\delta(N)$.  Lemma \ref{lem:buchstab-summation} gives the main term
$e^{-\gamma}N\log\log N+O_\delta(N)$.  Since $\delta$ is fixed, this proves \eqref{eq:L-average-lower}.
\end{proof}

The upper bound is based on a dual certificate of bounded total cost.

\begin{lemma}[Bounded-cost dual certificate]\label{lem:bounded-dual}
There is an absolute constant $C_0$ such that, for every $n\geq3$,
\begin{equation}\label{eq:bounded-dual}
\M(n)\le C_0+\UU(n).
\end{equation}
\end{lemma}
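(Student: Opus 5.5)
The plan is to prove Lemma~\ref{lem:bounded-dual} by a charging (dual certificate) argument: each element of a pairwise coprime set either lies in the ``rough'' set counted by $\UU(n)$, or is charged injectively to a prime whose cost is summable. Fix a pairwise coprime $A\subset[1,n)$ and write each $a\in A$ as $a=n-d$ with $1\le d<n$. Put $z_d=d/(2\log d)$. I split $A$ into four classes:
\begin{enumerate}[label=\textup{(\alph*)}]
\item the element $a=1$ (if present), which contributes $1/(n-1)\le 1$;
\item elements with $d<16$, which contribute at most $\sum_{d<16}1/d=O(1)$;
\item elements with $d\ge16$ and $\Pmin(n-d)>z_d$, whose total contribution is at most $\UU(n)$, since each such $d$ appears in \eqref{eq:U-def-intro} and distinct $a$ give distinct $d$;
\item the remaining elements, with $d\ge 16$, $a\ge2$, and $\Pmin(a)\le z_d$.
\end{enumerate}
It remains to show that class (d) contributes $O(1)$, uniformly in $n$ and $A$.

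For class (d), assign to each $a=n-d$ the prime $p(a)=\Pmin(a)$. Because $A$ is pairwise coprime and every such $a$ exceeds $1$, the map $a\mapsto p(a)$ is injective: two elements sharing a least prime factor would have a common divisor. By construction $p=p(a)$ satisfies $p\le z_d=d/(2\log d)$. From this I extract a lower bound for $d$ in terms of $p$. First, $d\ge 2p\log d\ge 2p$, since $\log d\ge1$ for $d\ge16$. Then, using $\log d\ge\log(2p)$, we get
\[
d\ge 2p\log(2p)\ge p\log p .
\]
Hence the cost of $a$ satisfies $1/d\le 1/(2p\log(2p))$. No monotonicity of $z_d$ is actually needed, only this direct inequality.

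Summing over the distinct primes $p(a)$, the contribution of class (d) is at most
\[
\sum_{p}\frac{1}{2p\log(2p)}\le\sum_p\frac1{p\log p}<\infty
\]
by \eqref{eq:prime-p-logp}. This is an absolute constant.

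Adding the four classes gives
\[
\sum_{a\in A}\frac1{n-a}\le 1+\sum_{d<16}\frac1d+\sum_p\frac1{p\log p}+\UU(n).
\]
Taking the supremum over $A$ yields \eqref{eq:bounded-dual} with $C_0$ equal to the sum of these three constants. The only step needing care is the injectivity of the charging map together with the inversion $p\le z_d\Rightarrow d\gg p\log p$. Both are elementary, so I expect no genuine obstacle. The point of the threshold $d/(2\log d)$, rather than $d$, is precisely to make the charged cost $\sum 1/(p\log p)$ converge, instead of the divergent $\sum 1/p$.
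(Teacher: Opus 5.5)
Your proposal is correct and follows essentially the same argument as the paper: you use the same split into $a=1$, $d<16$, the rough residual counted by $\UU(n)$, and non-rough elements charged at cost $O(1/(p\log p))$ to pairwise disjoint primes. The only cosmetic difference is that you charge injectively to $\Pmin(a)$ and derive $d\ge 2p\log(2p)$. The paper instead bounds by $\sum_{q\mid a}1/(q\log q)$ using $\log p\le\log r$, and the two routes are equivalent.
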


\begin{proof}
Let $A\subset[1,n)$ be pairwise coprime and write $r=n-a$.  The terms with $r<16$ and the possible element $a=1$ contribute $O(1)$.  For $r\ge16$, if $a=n-r>1$ has a prime divisor $p\le r/(2\log r)$, then
\[
p\log p\le \frac{r}{2\log r}\log r=\frac r2,
\]
and hence
\[
\frac1r\le \frac1{p\log p}\le \sum_{q\mid a}\frac1{q\log q}.
\]
If no such prime divisor exists, then $\Pmin(a)>r/(2\log r)$ and we keep the residual term $1/r$.  Thus
\[
\frac1r\le \sum_{q\mid a}\frac1{q\log q}
+\frac1r\one_{\{\Pmin(a)>r/(2\log r)\}}.
\]
Since the elements of $A$ are pairwise coprime, their prime divisor sets are disjoint.  Therefore
\[
\sum_{a\in A}\sum_{q\mid a}\frac1{q\log q}\le\sum_q\frac1{q\log q}<\infty.
\]
Summing over $a\in A$ proves \eqref{eq:bounded-dual}.
\end{proof}

\begin{proposition}[Average upper bound]\label{prop:upper}
As $N\to\infty$,
\begin{equation}\label{eq:U-average-upper}
\sum_{n\le N}\UU(n)=e^{-\gamma}N\log\log N+O(N).
\end{equation}
Consequently,
\[
\sum_{n\le N}\M(n)\le e^{-\gamma}N\log\log N+O(N).
\]
\end{proposition}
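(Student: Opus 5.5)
The plan mirrors the proof of Proposition \ref{prop:lower}, with $d$ replaced by $z_d=d/(2\log d)$ in the roughness threshold. First, swap the order of summation by setting $m=n-d$. This gives the exact identity
\[
\sum_{n\le N}\UU(n)=\sum_{16\le d<N}\frac1d\,\R\bigl(N-d,\,z_d\bigr),
\]
where the convention $\Pmin(1)=\infty$ is already built into $\R$. Then split the $d$-range at $N^\delta$ and $N/2$, for a fixed $0<\delta<1/4$.

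The ranges other than $16\le d\le N^\delta$ are routine.

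\begin{itemize}
\item For $d>N/2$, use the trivial bound $\R(N-d,z_d)\le N-d+1$. The contribution is at most $\sum_{N/2<d<N}(N-d+1)/d=O(N)$.
\item For $N^\delta<d\le N/2$, apply Lemma \ref{lem:rough}(ii) with $y=z_d\ge2$. This gives $\R(N-d,z_d)\ll N/\log z_d+1$. By \eqref{eq:buchstab-transition-z} the contribution is $O_\delta(N)$, plus $\sum_{d\le N}1/d=O(\log N)$ from the ``$+1$'' term.
\end{itemize}

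In the main range $d_0\le d\le N^\delta$ we have $z_d\le d\le N^\delta$ and $N-d\ge N/2$. So $\log(N-d)/\log z_d\ge 1/\delta-o(1)>1$, and in particular $z_d\le (N-d)^{\theta}$ for a fixed $\theta<1$. Hence \eqref{eq:buchstab-standard} applies uniformly, for $d_0$ chosen so that $z_d$ exceeds the admissible threshold, and gives
\[
\R(N-d,z_d)=\frac{(N-d)\,\omega\bigl(\log(N-d)/\log z_d\bigr)}{\log z_d}+O_\delta\Bigl(\frac{N}{(\log z_d)^2}+1\Bigr).
\]
The finitely many $d<d_0$ contribute $O_\delta(N)$ trivially. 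The error terms, weighted by $1/d$, sum to
\[
N\sum_d \frac{1}{d(\log z_d)^2}+O(\log N)=O(N),
\]
because $\log z_d\asymp\log d$ and $\sum 1/(d\log^2 d)$ converges. The main term is precisely the left side of \eqref{eq:buchstab-sum-z}, so Lemma \ref{lem:buchstab-summation} supplies $e^{-\gamma}N\log\log N+O_\delta(N)$. Since $\delta$ is fixed, this proves \eqref{eq:U-average-upper}.

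The consequence for $\M$ follows by summing Lemma \ref{lem:bounded-dual} over $3\le n\le N$. The terms $n=1,2$ are $O(1)$, and the constant $C_0$ contributes $O(N)$.

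I expect no step to be a genuine obstacle, since the delicate change of variables from $d$ to $z_d$ was already carried out in Lemma \ref{lem:buchstab-summation}. The only point needing care is uniformity: \eqref{eq:buchstab-standard} must be applied with $y=z_d$ rather than $d$. That requires checking that $z_d\to\infty$ monotonically for $d\ge16$ and that $u$ stays bounded away from $1$. Both follow from $z_d\le d$ and $d\le N^\delta$.
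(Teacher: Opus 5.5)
Your proposal is correct and follows the paper's proof essentially step by step: the same substitution $m=n-d$, the same split of the $d$-range at $N^\delta$ and $N/2$, the Buchstab--de Bruijn estimate combined with Lemma~\ref{lem:buchstab-summation} in the main range, and Lemma~\ref{lem:bounded-dual} for the consequence for $\M$. Your extra bookkeeping is a harmless refinement of details the paper leaves implicit; this covers the ``$+1$'' terms from Lemma~\ref{lem:rough}(ii), the check that $z_d\ge2$ is increasing for $d\ge16$, and the small values of $n$.
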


\begin{proof}
Put $z_d=d/(2\log d)$ for $d\ge16$.  Then
\begin{equation}\label{eq:U-average-exact}
\sum_{n\le N}\UU(n)=\sum_{16\le d<N}\frac1d\R(N-d,z_d).
\end{equation}
Fix $0<\delta<1/4$.  For $d>N/2$ the contribution is $O(N)$ by the trivial bound $\R(N-d,z_d)\le N-d+1$.  For $N^\delta<d\le N/2$, \eqref{eq:rough-upper-global} gives
\[
\sum_{N^\delta<d\le N/2}\frac1d\R(N-d,z_d)
\ll N\sum_{N^\delta<d\le N/2}\frac1{d\log z_d}=O_\delta(N).
\]
Choose $d_0=d_0(\delta)\ge16$ so large that $z_d=d/(2\log d)$ is at least the threshold required in the quoted Buchstab--de Bruijn estimate for every $d\ge d_0$ in the range below.  The finitely many terms $16\le d<d_0$ contribute $O_\delta(N)$.  For $d_0\le d\le N^\delta$, the Buchstab parameter
$\log(N-d)/\log z_d$ is bounded below by a constant greater than $1$, so \eqref{eq:buchstab-standard} gives
\[
\R(N-d,z_d)=
\frac{(N-d)\omega(\log(N-d)/\log z_d)}{\log z_d}
+O_\delta\left(\frac{N}{(\log z_d)^2}+1\right).
\]
The error contributes $O_\delta(N)$, and Lemma \ref{lem:buchstab-summation} gives \eqref{eq:U-average-upper}.  Lemma \ref{lem:bounded-dual} then gives the asserted bound for $\M$.
\end{proof}

\begin{proof}[Proof of Theorem \ref{thm:main}]
Combine Propositions \ref{prop:lower} and \ref{prop:upper} with the pointwise inequalities $\LL(n)\le\M(n)\le C_0+\UU(n)$.
\end{proof}

\begin{proof}[Proof of Corollary \ref{cor:average-saving}]
By Mertens' theorem,
\[
\sum_{n\le N}\sum_{p<n}\frac1p
=\sum_{p<N}\frac{N-p}{p}
=N\sum_{p<N}\frac1p-\pi(N)+O(1)
=N\log\log N+O(N).
\]
The corollary follows from Theorem \ref{thm:main}.
\end{proof}

\section{Almost-all reduction and the variance problem}\label{sec:distribution}

We now prove Theorem \ref{thm:almost-all}.  The proof uses only the mean sandwich from Section \ref{sec:main-proof}.

\begin{proof}[Proof of Theorem \ref{thm:almost-all}]
By Propositions \ref{prop:lower} and \ref{prop:upper},
\[
\sum_{n\le N}\LL(n)=e^{-\gamma}N\log\log N+O(N),
\qquad
\sum_{n\le N}\UU(n)=e^{-\gamma}N\log\log N+O(N).
\]
Let $C_1=\sum_{1\le d<16}1/d$.  Since $\Pmin(n-d)>d$ implies $\Pmin(n-d)>d/(2\log d)$ for $d\ge16$, we have
\[
0\le C_1+\UU(n)-\LL(n)
\]
for every $n$.  Moreover
\begin{equation}\label{eq:mean-gap-UL}
\sum_{n\le N}(C_1+\UU(n)-\LL(n))=O(N),
\end{equation}
because the additional $C_1N$ term has the same size.  For every fixed $\eta>0$, Markov's inequality gives
\[
\#\{n\le N:C_1+\UU(n)-\LL(n)>\eta\log\log N\}
\ll_\eta \frac{N}{\log\log N}.
\]
By Lemma \ref{lem:bounded-dual},
\[
0\le \M(n)-\LL(n)\le C_0+\UU(n)-\LL(n)
\le C_0+C_1+\UU(n)-\LL(n),
\]
so the same estimate holds with $\M(n)-\LL(n)$ in place of $C_1+\UU(n)-\LL(n)$.  This proves
\[
\M(n)=\LL(n)+o(\log\log N)
\]
for almost all $n\le N$.  To replace $\log\log N$ by $\log\log n$, discard the $O(\sqrt N)=o(N)$ integers $n\le\sqrt N$.  On the remaining range, for large $N$,
\[
\log\log n\ge \tfrac12\log\log N.
\]
Hence an exceptional set on which $\M(n)-\LL(n)>\eta\log\log n$ would also be exceptional for the preceding estimate with threshold $(\eta/2)\log\log N$.  This gives \eqref{eq:M-L-aa}.
\end{proof}

Theorem \ref{thm:almost-all} isolates the distributional problem to be solved by a second moment.  We shall use the elementary pointwise estimate
\begin{equation}\label{eq:L-pointwise-O}
\LL(n)\ll\log\log n,
\end{equation}
which follows from Lemma \ref{lem:crude-interval-sieve} by summing dyadic blocks: in $D<d\le2D$ the number of $d$ with $\Pmin(n-d)>d$ is $O(D/\log D)$.

The natural full variance conjecture is the dyadic estimate
\begin{equation}\label{eq:L-second-moment-O-N-heuristic}
\sum_{N<n\le2N}\left|\LL(n)-e^{-\gamma}\log\log N\right|^2=O(N).
\end{equation}
For Theorem \ref{thm:main-aa}, however, it is enough to prove the weaker normalized estimate
\begin{equation}\label{eq:L-second-moment-dyadic}
\sum_{N<n\le2N}\left|\LL(n)-e^{-\gamma}\log\log N\right|^2
=o\bigl(N(\log\log N)^2\bigr).
\end{equation}
The word ``enough'' uses the pointwise bound \eqref{eq:L-pointwise-O}, which controls the contribution of exceptional sets to the second moment.  Passing between dyadic intervals and $n\le N$ is standard: discard $n\le\sqrt N$, which is $o(N)$, and use $\log\log n\ge \tfrac12\log\log N$ on the remaining range.

The first moment plus Markov's inequality controls only the mean gap between $\M$ and $\LL$.  It gives no almost-all upper bound for $\LL$ itself.  A variance estimate, or an equally strong substitute, is therefore a genuinely new input.  Theorem \ref{thm:truncated-second-moment} below proves the required normalized estimate by truncating at $Y=N^{\beta_N}$.  The remaining tail is then negligible for almost all $n$ by its first moment.

We now record the exact singular-series structure of the off-diagonal.  This is the organizing fact for the second-moment proof of Theorem \ref{thm:main-aa}.

\begin{remark}[Parity and the twin-prime singular series: motivation only]\label{rem:parity-singular-series}
For $h\ge1$ define
\begin{equation}\label{eq:singular-series-h-new}
\mathfrak S(h)=
\begin{cases}
2C_2\displaystyle\prod_{\substack{p\mid h\\p>2}}\dfrac{p-1}{p-2},& h\text{ even},\\[1.2ex]
0,&h\text{ odd},
\end{cases}
\qquad
C_2=\prod_{p>2}\left(1-\frac1{(p-1)^2}\right).
\end{equation}
Then $\mathfrak S(h)=0$ for odd $h$, while
\begin{equation}\label{eq:S-positive-even}
\mathfrak S(h)\ge 2C_2>1.3203\ldots>1
\end{equation}
for every even $h$.  Moreover
\begin{equation}\label{eq:singular-series-average-new}
\sum_{h\le H}\mathfrak S(h)=H+O((\log H)^2).
\end{equation}
\end{remark}

\begin{proof}
The formula \eqref{eq:singular-series-h-new} is the usual twin-prime singular series for the pair of linear forms $m,m+h$.  If $h$ is odd, the two forms have opposite parity, so they cannot both avoid the residue class $0\pmod 2$; this is the local obstruction at $2$ and gives $\mathfrak S(h)=0$.  If $h$ is even, every factor $(p-1)/(p-2)$ in \eqref{eq:singular-series-h-new} is at least $1$, giving \eqref{eq:S-positive-even}; the numerical value follows from the classical evaluation $C_2=0.66016\ldots$.

The mean-value estimate \eqref{eq:singular-series-average-new} is the classical Gallagher mean-value theorem for singular series \cite{Gallagher1976}.  In this special two-form case it also follows by expanding the product into a divisor sum and summing the resulting local factors; the complete-period average of the finite local factor is exactly $1$ prime by prime, and Gallagher's argument gives the displayed $O((\log H)^2)$ remainder.
\end{proof}

\begin{remark}[What the covariance calculation really says]\label{rem:covariance}
Write
\[
I_d(n)=\one_{\{\Pmin(n-d)>d\}},
\qquad
\LL(n)=\sum_{d<n}\frac{I_d(n)}d.
\]
The diagonal part of the second moment is harmless:
\[
\sum_d \frac{\mathbb E I_d}{d^2}=O(1).
\]
For the off-diagonal, let $d_1<d_2$ and $h=d_2-d_1$.  The average of $I_{d_1}(n)I_{d_2}(n)$ counts integers $m=n-d_2$ for which $m$ is $d_2$-rough and $m+h$ is $d_1$-rough.  The local two-point sieve density has the formal finite product
\begin{equation}\label{eq:two-point-local-density}
\prod_{\substack{p\le d_1\\p\nmid h}}\left(1-\frac2p\right)
\prod_{\substack{p\le d_1\\p\mid h}}\left(1-\frac1p\right)
\prod_{d_1<p\le d_2}\left(1-\frac1p\right).
\end{equation}
After division by the product of the one-point densities, the finite correlation factor is
\begin{equation}\label{eq:two-point-correlation-factor}
C_{d_1}(h)=
\prod_{\substack{p\le d_1\\p\nmid h}}\left(1-\frac1{(p-1)^2}\right)
\prod_{\substack{p\le d_1\\p\mid h}}\frac{p}{p-1}.
\end{equation}
As $d_1\to\infty$, this tends to $\mathfrak S(h)$ for each fixed $h$.  Thus the covariance is expected to have the shape
\begin{equation}\label{eq:covariance-shape}
\operatorname{Cov}(I_{d_1},I_{d_2})
\approx
\frac{e^{-2\gamma}}{\log d_1\log d_2}\bigl(\mathfrak S(h)-1\bigr).
\end{equation}

The important correction is that the cancellation is not sign-mixing inside a single parity class.  For odd $h$, and $d_1,d_2\ge2$, the product $I_{d_1}(n)I_{d_2}(n)$ is identically zero: $n-d_1$ and $n-d_2$ differ by an odd number, so exactly one of them is even, while a $d$-rough integer with $d\ge2$ must be odd.  Hence the odd-$h$ covariance is negative, of size
\[
-\frac{e^{-2\gamma}}{\log d_1\log d_2}(1+o(1)).
\]
For even $h$, Remark \ref{rem:parity-singular-series} gives $\mathfrak S(h)>1$, so the main-term covariance is positive.  The even-$h$ covariances alone have total size comparable to $(\log\log N)^2$.  The collapse predicted in \eqref{eq:L-second-moment-O-N-heuristic} is caused by the exact even-versus-odd balance expressed by
\[
\sum_{h\le H}(\mathfrak S(h)-1)=O((\log H)^2),
\]
not by absolute-value estimates.

Consequently the second-moment proof should have the following skeleton.
\begin{enumerate}[label=\textup{(\roman*)}]
\item Prove a uniform two-dimensional fundamental lemma for the pair of forms $m,m+h$ with split sieving levels $d_2>d_1$.  The main term should be the finite product \eqref{eq:two-point-local-density}.  The proof below then sums the finite factor $C_{d_1}(h)$ directly; no limiting passage to the full singular series is used.
\item Use the parity vanishing for odd $h$ exactly; this part is free and should not be replaced by an absolute-value bound.
\item Sum the remaining main terms by the exact finite-product mean identity of Lemma \ref{lem:finite-singular-series-average}.  This is the step that cancels the positive even-$h$ mass against the negative odd-$h$ ballast.
\end{enumerate}

At the limiting heuristic level, partial summation in $h$, using \eqref{eq:singular-series-average-new}, gives the model estimate
\[
\sum_{d_2>d_1}\frac{\mathfrak S(d_2-d_1)-1}{d_2\log d_2}
\ll \frac{\log d_1}{d_1},
\]
and therefore
\[
\sum_{d_1<d_2}\frac{\operatorname{Cov}(I_{d_1},I_{d_2})}{d_1d_2}=O(1).
\]
This predicts bounded fluctuations of $\LL(n)-e^{-\gamma}\log\log n$ on dyadic intervals.  The resulting limiting distribution, if it exists, should be a fixed $O(1)$-scale law; it should not be sought at scale $\sqrt{\log\log n}$ or $\log\log n$.
\end{remark}

The preceding discussion is only motivation.  The proof below does not pass to the limiting singular series $\mathfrak S(h)$.  Instead it works with the finite local factor $C_y(h)$ and the exact finite mean identity $\sum_{q\mid P(y)}a_y(q)/q=1$, which avoids any non-uniform truncation in $h$.

The preceding discussion identifies the precise technical statement needed for the almost-all theorem.  We now prove it.  The almost-all theorem depends on the following long-interval two-dimensional fundamental-lemma input; this is the load-bearing analytic step of the section.  The only external input beyond the one-dimensional sieve estimates already used is the beta-sieve fundamental lemma itself.  The formal imported form is stated in Appendix \ref{app:beta-sieve} as Theorem \ref{thm:imported-beta-sieve}; the next lemma is the direct sequence-level consequence in the notation of this paper.  We then verify its hypotheses for the moving residue classes arising from the two forms $m$ and $m+h$.  Thus the result below is unconditional once the standard beta-sieve fundamental lemma is imported in the quoted form; no prime-distribution hypothesis is used in the proof of the almost-all theorem.

\begin{lemma}[Beta-sieve fundamental lemma, sequence form]\label{lem:quoted-beta-sieve}
Let $z\ge2$, $Q\ge z$, and $s=\log Q/\log z$.  Let $g$ be a multiplicative function supported on squarefree divisors of $P(z)=\prod_{p\le z}p$, with $0\le g(p)<1$.  Suppose that for some fixed constants $\kappa\ge1$ and $C_\kappa$ one has
\begin{equation}\label{eq:quoted-dimension}
\prod_{w\le p<z}(1-g(p))^{-1}
\le C_\kappa\left(\frac{\log z}{\log w}\right)^\kappa
\qquad(2\le w\le z).
\end{equation}
There is a threshold $s_0=s_0(\kappa,C_\kappa)$ in the beta-sieve fundamental lemma.  Assume $s\ge s_0$; in the applications below $s\to\infty$.  Then there are upper and lower beta-sieve weights $\lambda_q^\pm$, supported on squarefree $q\mid P(z)$ with $q\le Q$ and satisfying $\lambda_1^\pm=1$ and $|\lambda_q^\pm|\le1$, such that for every finite sequence \(\mathcal A\) equipped with sets \(\mathcal A_q\) satisfying
\[
|\mathcal A_q|=Xg(q)+r_q\qquad(q\mid P(z),\ q\le Q),
\]
and for the sifted set \(S(\mathcal A,z)\) whose elements avoid all primes \(p\le z\), one has
\[
\sum_{q\mid P(z)}\lambda_q^- |\mathcal A_q|
\le S(\mathcal A,z)
\le
\sum_{q\mid P(z)}\lambda_q^+ |\mathcal A_q|,
\]
and
\begin{equation}\label{eq:quoted-beta-main}
\sum_{q\mid P(z)}\lambda_q^\pm g(q)
=\prod_{p\le z}(1-g(p))\left(1+O_{\kappa,C_\kappa}(e^{-c_\kappa s})\right).
\end{equation}
Consequently
\begin{equation}\label{eq:quoted-beta-consequence}
S(\mathcal A,z)
=X\prod_{p\le z}(1-g(p))
\left(1+O_{\kappa,C_\kappa}(e^{-c_\kappa s})\right)
+O\left(\sum_{q\le Q}|r_q|\right).
\end{equation}
The conclusion is obtained by sandwiching $S(\mathcal A,z)$ between the lower and upper weighted sums; the relative error in the main term is therefore relative to the local product $\prod_{p\le z}(1-g(p))$, while all non-periodic distribution error enters only through $\sum_{q\le Q}|r_q|$.  
This is the sequence form obtained from the imported beta-sieve theorem, Theorem \ref{thm:imported-beta-sieve}.  In the notation of Friedlander--Iwaniec and Iwaniec--Kowalski, our $Q$ is the distribution level usually denoted $D$, our $z$ is the sifting level, and $s=\log Q/\log z$ is the sieve parameter.  The support condition $q\mid P(z)$, $q\le Q$, and the inequalities $|\lambda_q^\pm|\le1$ are the support and size conditions of the beta-sieve weights.  The theorem is uniform over all multiplicative local densities satisfying the displayed dimension condition.  Thus, once the dimension constant is fixed, the constants in \eqref{eq:quoted-beta-main} are independent of the particular residue classes that give rise to $g$.  In the applications below $\kappa=2$, $Q=N^{1/2}$, $z\le N^{\beta_N}$, and hence $s\ge(2\beta_N)^{-1}\to\infty$.  For the concrete choice $\beta_N=(\log\log N)^{-1/2}$ used later,
\[
E_2(s)(\log\log N)^2
\ll e^{-c\sqrt{\log\log N}}(\log\log N)^2=o(1),
\]
which is the only quantitative strength of the fundamental lemma needed in the second-moment summation.
\end{lemma}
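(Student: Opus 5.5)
The plan is to derive the lemma directly from the imported beta-sieve theorem (Theorem \ref{thm:imported-beta-sieve} in Appendix \ref{app:beta-sieve}); the only work is translating notation and checking hypotheses. First, I would identify the parameters. Our $Q$ is the level $D$, our $z$ is the sifting level, and $s=\log Q/\log z$ is the sieve parameter. The density $g$ is multiplicative on squarefree $q\mid P(z)$ with $0\le g(p)<1$, and \eqref{eq:quoted-dimension} is exactly the dimension condition with constants $\kappa,C_\kappa$. The imported theorem then supplies upper and lower weights $\lambda_q^\pm$ with the following properties: they are supported on squarefree $q\mid P(z)$ with $q\le Q$; they satisfy $\lambda_1^\pm=1$ and $|\lambda_q^\pm|\le1$; and $\lambda^-*1\le \one_{\{q=1\}}\le\lambda^+*1$ pointwise on divisors of $P(z)$. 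The imported theorem also gives the main-term estimate \eqref{eq:quoted-beta-main} for $s\ge s_0(\kappa,C_\kappa)$, with a constant $c_\kappa>0$. These are statements about the weights and $g$ alone, so they are uniform over all $g$ satisfying the dimension bound. That uniformity is what we later need when the residue classes move with $h$.

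Next, I would prove the sandwich. For an element $a$ of $\mathcal A$, let $P_a=\gcd(a,P(z))$, where the divisibility "$q\mid a$" is the relation defining $\mathcal A_q$; this is the relation the formal statement makes explicit. Then
\[
\sum_{q\mid P(z)}\lambda_q^\pm|\mathcal A_q|=\sum_{a\in\mathcal A}\sum_{q\mid P_a}\lambda_q^\pm .
\]
The combinatorial inequalities on $\lambda^\pm$ show that the inner sum is $\ge1$ (respectively $\le1$) when $P_a=1$, and $\ge0$ (respectively $\le0$) otherwise. This gives $\sum_q\lambda^-_q|\mathcal A_q|\le S(\mathcal A,z)\le\sum_q\lambda^+_q|\mathcal A_q|$.

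Finally, I would substitute $|\mathcal A_q|=Xg(q)+r_q$ on the support $q\le Q$. Each weighted sum becomes
\[
X\sum_q\lambda^\pm_q g(q)+\sum_q\lambda^\pm_q r_q .
\]
The second term is at most $\sum_{q\le Q}|r_q|$ in absolute value, since $|\lambda_q^\pm|\le1$. The first term is evaluated by \eqref{eq:quoted-beta-main}. Since the upper and lower bounds share the same main term up to a relative error $O(e^{-c_\kappa s})$, this yields \eqref{eq:quoted-beta-consequence}. The closing quantitative remark is then arithmetic. With $Q=N^{1/2}$ and $z\le N^{\beta_N}$ we get $s\ge(2\beta_N)^{-1}$. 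Taking $\beta_N=(\log\log N)^{-1/2}$ gives $e^{-cs}(\log\log N)^2\le e^{-c'\sqrt{\log\log N}}(\log\log N)^2=o(1)$.

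The main obstacle is bookkeeping rather than analysis: the imported theorem's normalization must match ours. That means checking the sieve-parameter convention ($D$ versus $Q$, and whether $s=\log D/\log z$), the range $w\le p<z$ in the dimension condition, and that the error term is relative to $\prod_{p\le z}(1-g(p))$ rather than additive. I would reconcile these first, and adjust $s_0$ and $c_\kappa$ if the conventions differ by fixed factors.
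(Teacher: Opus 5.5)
Your proposal is correct and follows the paper's route: the paper obtains this lemma as Corollary~\ref{cor:imported-beta-sequence} of the imported Theorem~\ref{thm:imported-beta-sieve}, exactly as you do. It sandwiches $S(\mathcal A,z)$ via the pointwise weight inequalities, substitutes $|\mathcal A_q|=Xg(q)+r_q$ on the support $q\le Q$, bounds the remainder using $|\lambda_q^\pm|\le1$, and applies the weighted main-term estimate. Your explicit interchange of summation through $(a,P(z))$, and your remark that $\mathcal A_q$ must be defined by the divisibility relation $q\mid a$, spell out what the paper leaves implicit and what its formalisation makes explicit.
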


\begin{lemma}[Long-interval beta sieve with two forbidden classes]\label{lem:long-interval-beta-sieve}
Let $I$ be an interval of integers of length $N$.  Let $2\le z\le N^{\beta_N}$, where $\beta_N\to0$ and $\beta_N\log N\to\infty$.  For every prime $p\le z$ let $\Omega_p\subset \mathbb Z/p\mathbb Z$ be a set of forbidden residue classes, and put $\nu(p)=|\Omega_p|$.  Assume
\begin{enumerate}[label=\textup{(\alph*)}]
\item $0\le \nu(p)<p$ and $\nu(p)\le2$ for every $p\le z$;
\item for each squarefree $q$ supported on primes $p\le z$, if $\Omega_q$ is the set of residue classes modulo $q$ obtained by the Chinese remainder theorem and $\nu(q)=|\Omega_q|$, then
\begin{equation}\label{eq:generic-sieve-remainder}
\#\{m\in I:m\bmod q\in \Omega_q\}
=N\frac{\nu(q)}q+r_q,
\qquad |r_q|\le 2^{\omega(q)} .
\end{equation}
\end{enumerate}
Let
\begin{equation}\label{eq:generic-sieve-product}
V_\Omega(z)=\prod_{p\le z}\left(1-\frac{\nu(p)}p\right).
\end{equation}
Then, uniformly in the sets $\Omega_p$,
\begin{equation}\label{eq:generic-beta-sieve}
\#\{m\in I:m\bmod p\notin\Omega_p\text{ for every }p\le z\}
=N V_\Omega(z)\left(1+O(e^{-c/\beta_N})\right)+O(N^{1/2+o(1)}),
\end{equation}
where $c>0$ is absolute.  The $o(1)$ in the final error is absolute and uniform.
\end{lemma}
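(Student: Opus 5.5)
We apply Lemma \ref{lem:quoted-beta-sieve} with $\kappa=2$ to the sequence $\mathcal A=I$. For squarefree $q\mid P(z)$ we take $\mathcal A_q=\{m\in I: m\bmod p\in\Omega_p \text{ for every } p\mid q\}$, which by the Chinese remainder theorem is $\{m\in I:m\bmod q\in\Omega_q\}$. We put $g(q)=\nu(q)/q$ and $X=N$. Then $g$ is multiplicative and supported on squarefree divisors of $P(z)$, because $\nu(q)=\prod_{p\mid q}\nu(p)$ by CRT. The sifted set $S(\mathcal A,z)$ is precisely the left side of \eqref{eq:generic-beta-sieve}. Hypothesis (a) gives $0\le g(p)<1$.

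The first step is to verify the dimension condition \eqref{eq:quoted-dimension} with $\kappa=2$ and an absolute $C_2$. Since $\nu(p)\le 2$, we have $g(p)\le 2/p$.

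\begin{itemize}
\item For $p\ge 5$: $(1-g(p))^{-1}\le (1-2/p)^{-1}=(1-1/p)^{-2}(1+O(1/p^2))$. Mertens' theorem then bounds the product over $w\le p<z$, $p\ge5$, by $\ll(\log z/\log w)^2$.
\item For $p=2,3$: $\nu(p)<p$ gives $g(p)\le 2/3$, so each factor is at most $3$. These two primes contribute only an absolute constant to $C_2$.
\end{itemize}

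Hence $C_2$ and the threshold $s_0$ are absolute, and the resulting constants are uniform in the sets $\Omega_p$, as the lemma's uniformity clause states.

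Next we choose the level $Q=N^{1/2}$, so that $s=\log Q/\log z\ge 1/(2\beta_N)\to\infty$, and $s\ge s_0$ for large $N$. Then \eqref{eq:quoted-beta-consequence} gives the main term $NV_\Omega(z)(1+O(e^{-c_2 s}))=NV_\Omega(z)(1+O(e^{-c/\beta_N}))$ with $c=c_2/2$, which is absolute. For small $N$ the statement is trivial after enlarging constants.

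The remaining step is the remainder sum. By (b),
\[
\sum_{q\le Q}|r_q|\le\sum_{q\le N^{1/2}}2^{\omega(q)}\ll N^{1/2}\log N=N^{1/2+o(1)},
\]
and this $o(1)$ is absolute. The only point that needs care is the formal matching of notation with the imported theorem. Lemma \ref{lem:quoted-beta-sieve} requires the remainder identity only for $q\mid P(z)$ with $q\le Q$, which (b) supplies. The sandwich inequality is applied with weights supported there, so no other $q$ enter. I expect no genuine obstacle here, since everything reduces to checking the hypotheses of the quoted fundamental lemma.
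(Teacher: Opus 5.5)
Your proposal is correct and follows the paper's proof essentially step for step. Both use $g(q)=\nu(q)/q$, $X=N$, $Q=N^{1/2}$ and $\kappa=2$, verify the dimension condition by Mertens with the small primes absorbed into an absolute $C_2$, bound the remainders by $\sum_{q\le N^{1/2}}2^{\omega(q)}\ll N^{1/2}\log N$, and obtain $s\ge(2\beta_N)^{-1}$. The only differences are cosmetic: the paper makes the sifting encoding explicit via $P_\Omega(m)=\prod_{p\le z,\ m\bmod p\in\Omega_p}p$, and it treats bounded $z$ separately by the Chinese remainder theorem, which your version rightly does not need since $s\ge s_0$ holds for every $z\le N^{\beta_N}$ once $N$ is large.
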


\begin{proof}
Let $Q=N^{1/2}$ and $P(z)=\prod_{p\le z}p$.  If $z$ is bounded, say $z\le z_0$, the Chinese remainder theorem gives $S=NV_\Omega(z)+O_{z_0}(1)$ uniformly for all such $z$, which is absorbed by the asserted $N^{1/2+o(1)}$ term for large $N$.  We may therefore assume $z\to\infty$.  By \eqref{eq:generic-sieve-remainder},
\begin{equation}\label{eq:generic-remainder-sum}
\sum_{\substack{q\le Q\\ q\text{ squarefree}}}|r_q|
\le \sum_{q\le Q}2^{\omega(q)}=N^{1/2+o(1)}.
\end{equation}
The dimension condition needed for the beta sieve is uniform.  Since $\nu(p)\le2$ and $\nu(p)<p$,
\begin{equation}\label{eq:generic-dimension-bound}
\prod_{w\le p<z}\left(1-\frac{\nu(p)}p\right)^{-1}
\le C\left(\frac{\log z}{\log w}\right)^2
\end{equation}
for $2\le w\le z$, with an absolute $C$.  Indeed, the prime $2$ contributes at most an absolute factor, and for $p>2$,
\[
\log\left(1-\frac{\nu(p)}p\right)^{-1}
\le \frac2p+O\left(\frac1{p^2}\right),
\]
so Mertens' estimate for $\sum_{w\le p<z}1/p$ gives \eqref{eq:generic-dimension-bound}.  Let $g(q)=\nu(q)/q$ for squarefree $q\mid P(z)$.  The sifted set is encoded by
\[
P_\Omega(m)=\prod_{\substack{p\le z\\ m\bmod p\in\Omega_p}}p;
\]
an integer $m\in I$ is counted precisely when $(P_\Omega(m),P(z))=1$.  For squarefree $q\mid P(z)$, the corresponding set $\mathcal A_q$ is exactly the set counted in \eqref{eq:generic-sieve-remainder}.  Applying Lemma \ref{lem:quoted-beta-sieve} with $X=N$, $Q=N^{1/2}$, $\kappa=2$, and the dimension constant from \eqref{eq:generic-dimension-bound} gives
\[
S=N V_\Omega(z)\left(1+O(e^{-cs})\right)
+O\left(\sum_{q\le Q}|r_q|\right)
=N V_\Omega(z)\left(1+O(e^{-cs})\right)+O(N^{1/2+o(1)}),
\]
where $s=\log Q/\log z$.  The constants depend only on the absolute dimension constant in \eqref{eq:generic-dimension-bound}; in particular they are independent of the particular residue classes $\Omega_p$.  The prime $2$ causes no exceptional case here because the assumption $\nu(p)<p$ excludes inadmissibility.  Finally $z\le N^{\beta_N}$ implies $s\ge(2\beta_N)^{-1}$, which proves \eqref{eq:generic-beta-sieve}.
\end{proof}

For $2\le d_1<d_2$ and $h=d_2-d_1$, put
\begin{equation}\label{eq:V12-def}
V(d_1,d_2;h)=
\prod_{\substack{p\le d_1\\p\nmid h}}\left(1-\frac2p\right)
\prod_{\substack{p\le d_1\\p\mid h}}\left(1-\frac1p\right)
\prod_{d_1<p\le d_2}\left(1-\frac1p\right).
\end{equation}
For odd $h$ this product is zero because of the prime $2$, and the corresponding sifted set is empty once $d_1\ge2$.

\begin{lemma}[Two-dimensional fundamental lemma in the long range]\label{lem:two-dim-fundamental}
Let $Y=N^{\beta_N}$, where $\beta_N\to0$ and $\beta_N\log N\to\infty$.  Uniformly for $2\le d_1<d_2\le Y$, with $h=d_2-d_1$, one has
\begin{equation}\label{eq:two-dim-fundamental}
\sum_{N<n\le2N} I_{d_1}(n)I_{d_2}(n)
=N V(d_1,d_2;h)+O\bigl(\eta_N N V(d_1,d_2;h)\bigr)+O(N^{1/2+o(1)}),
\end{equation}
where, for the beta-sieve fundamental lemma, one may take
\begin{equation}\label{eq:eta-beta}
\eta_N\ll \exp(-c/\beta_N)
\end{equation}
with an absolute constant $c>0$.  The $o(1)$ in the final remainder of \eqref{eq:two-dim-fundamental} is absolute and uniform in $d_1,d_2,h$.  Also, uniformly for $2\le d\le Y$,
\begin{align}\label{eq:one-dim-fundamental-for-variance}
\sum_{N<n\le2N}I_d(n)&=N V(d)+O(\eta_N N V(d))+O(N^{1/2+o(1)}),\\
V(d)&=\prod_{p\le d}\left(1-\frac1p\right).\nonumber
\end{align}
\end{lemma}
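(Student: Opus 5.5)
The plan is to reduce both estimates to Lemma~\ref{lem:long-interval-beta-sieve} by rewriting each count as a sifted set of integers in an interval of length $N$ with moving forbidden residue classes. For the two-point count, substitute $m=n-d_2$. As $n$ runs over $N<n\le 2N$, $m$ runs over an interval $I$ of $N$ consecutive integers. The condition $I_{d_1}(n)I_{d_2}(n)=1$ says that $m$ has no prime factor $p\le d_2$ and that $m+h$ has no prime factor $p\le d_1$. We must first check that $m$ and $m+h$ exceed $d_2$, so that the rough condition is equivalent to avoiding residue classes; this holds since $m>N-Y>Y$. The count is therefore the number of $m\in I$ with $m\bmod p\notin\Omega_p$ for every $p\le z:=d_2$, where
\[
\Omega_p=\{0,-h\}\ (p\le d_1),\qquad \Omega_p=\{0\}\ (d_1<p\le d_2).
\]
So $\nu(p)=2$ when $p\le d_1$ and $p\nmid h$, and $\nu(p)=1$ otherwise. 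With these densities, $V_\Omega(z)=V(d_1,d_2;h)$ exactly.

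Next we verify the hypotheses of Lemma~\ref{lem:long-interval-beta-sieve}. For (a), the bound $\nu(p)\le2$ is clear, and $\nu(p)<p$ fails only when $p=2$, $h$ odd and $d_1\ge2$. In that case $\nu(2)=2$, so the sifted set is empty and $V(d_1,d_2;h)=0$, and \eqref{eq:two-dim-fundamental} holds trivially. This is the parity case already noted after \eqref{eq:V12-def}, so we may assume $h$ is even. For (b), take a squarefree $q\mid P(z)$. By the Chinese remainder theorem, $\Omega_q$ is a union of $\nu(q)$ classes modulo $q$, and each class meets $I$ in $N/q+O(1)$ points with error at most $1$. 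Summing gives $r_q\le\nu(q)\le 2^{\omega(q)}$. Since $z=d_2\le Y=N^{\beta_N}$, the lemma yields the main term $NV(d_1,d_2;h)(1+O(e^{-c/\beta_N}))$ plus $O(N^{1/2+o(1)})$, uniformly in $d_1,d_2,h$ because the lemma is uniform in the $\Omega_p$. This gives \eqref{eq:two-dim-fundamental} with $\eta_N\ll\exp(-c/\beta_N)$.

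The one-dimensional statement \eqref{eq:one-dim-fundamental-for-variance} is the same argument with $m=n-d$, $\Omega_p=\{0\}$ for $p\le d$, and $\nu(p)=1<p$. Hypothesis (b) holds with $|r_q|\le1$, and $V_\Omega(d)=V(d)$.

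The only delicate points are bookkeeping rather than analysis. First, the boundary convention: $\Pmin(n-d)>d$ must coincide with ``no prime $p\le d$ divides $n-d$'', which holds because $n-d\ge N-Y\ge2$, so the convention $\Pmin(1)=\infty$ never intervenes. Second, the uniformity claim: the $o(1)$ in $N^{1/2+o(1)}$ comes from $\sum_{q\le N^{1/2}}2^{\omega(q)}\ll N^{1/2}\log N$, which does not depend on the residue classes. I expect the main (mild) obstacle to be confirming that the odd-$h$ exclusion of $\nu(2)=2$ is the only failure of admissibility. For odd primes $p$, $\nu(p)\le2<p$ always, so no other prime can cause trouble.
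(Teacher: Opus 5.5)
Your proposal is correct and follows the paper's proof essentially step for step. The shared steps are the substitution $m=n-d_2$, the forbidden classes $\{0,-h\}$ for $p\le d_1$ and $\{0\}$ for $d_1<p\le d_2$, the separate treatment of odd $h$ (empty sifted set, $V(d_1,d_2;h)=0$), the CRT remainder bound $|r_q|\le\nu(q)\le 2^{\omega(q)}$, and an application of Lemma~\ref{lem:long-interval-beta-sieve} with $z=d_2$ (respectively $z=d$). Your extra check that $m>N-Y>Y$ is harmless but not needed: for every positive integer $m$, the condition $\Pmin(m)>d$ is already equivalent to $m$ avoiding the class $0\bmod p$ for all primes $p\le d$.
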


\begin{proof}
We verify the hypotheses of Lemma \ref{lem:long-interval-beta-sieve}.  Put
$m=n-d_2$ and $h=d_2-d_1$.  The two roughness conditions are
\begin{equation*}
(m,P(d_2))=1,\qquad (m+h,P(d_1))=1,
\end{equation*}
where $P(z)=\prod_{p\le z}p$.  For a prime $p\le d_2$ let $\Omega_p$ be the set of forbidden residue classes for $m\pmod p$.  Thus
\begin{equation*}
\Omega_p=\{0,-h\}\quad(p\le d_1),\qquad
\Omega_p=\{0\}\quad(d_1<p\le d_2),
\end{equation*}
with the two classes in the first set coinciding when $p\mid h$.  If $h$ is odd, then $\Omega_2$ contains both residue classes modulo $2$, and the sifted set is empty; in this case $V(d_1,d_2;h)=0$ and the asserted estimate is immediate.  We may therefore assume that $h$ is even.  Then $\nu(p):=|\Omega_p|<p$ for every $p\le d_2$, and
\begin{equation*}
V(d_1,d_2;h)=\prod_{p\le d_2}\left(1-\frac{\nu(p)}p\right),
\end{equation*}
which is exactly \eqref{eq:V12-def}.

For a squarefree integer $q$ composed of primes at most $d_2$, let $\Omega_q$ be the set of classes modulo $q$ obtained from the Chinese remainder theorem, and write $\nu(q)=|\Omega_q|=\prod_{p\mid q}\nu(p)$.  In the interval
$N-d_2<m\le2N-d_2$, whose length is exactly $N$, we have
\begin{equation*}
\#\{m:\thinspace N-d_2<m\le2N-d_2,\thinspace m\bmod q\in\Omega_q\}
=N\frac{\nu(q)}q+r_q,\qquad |r_q|\le \nu(q)\le2^{\omega(q)}.
\end{equation*}
Thus Lemma \ref{lem:long-interval-beta-sieve}, with $z=d_2$, gives
\begin{equation*}
\sum_{N<n\le2N} I_{d_1}(n)I_{d_2}(n)
=N V(d_1,d_2;h)\left(1+O(e^{-c s_N})\right)+O(N^{1/2+o(1)}),
\end{equation*}
where
\begin{equation*}
s_N=\frac{\log N^{1/2}}{\log d_2}\ge \frac{1}{2\beta_N}.
\end{equation*}
This proves \eqref{eq:two-dim-fundamental} with $\eta_N\ll e^{-c'/\beta_N}$.  The constants and the final $o(1)$ are absolute and uniform in $d_1,d_2,h$.

The one-dimensional estimate \eqref{eq:one-dim-fundamental-for-variance} follows from the same lemma with one forbidden class $0\pmod p$ for each $p\le d$ and sieve level $z=d$; the hypotheses are verified in the same way.
\end{proof}

For real $x\ge2$ we write throughout this section
\[
V(x)=\prod_{p\le x}\left(1-\frac1p\right).
\]
The next lemma is the finite-product cancellation needed to sum the main terms.  The proof uses the exact complete-period mean of the finite local factor.  This is where parity cancellation is used quantitatively; an absolute comparison of $C_y$ with the limiting singular series would lose a factor of order $\log\log Y$.  As a sanity check, $C_y(h)=0$ for odd $h$ because of the prime $2$, while for even $h$ the local factor is often larger than $1$.  Thus the cancellation below is genuinely signed; no absolute-value version of \eqref{eq:finite-series-average} with the same strength can hold.

\begin{lemma}[Exact finite singular-series mean and weighted cancellation]\label{lem:finite-singular-series-average}
For $y\ge2$ define
\begin{equation}\label{eq:Cy-def}
C_y(h)=
\prod_{\substack{p\le y\\ p\nmid h}}\left(1-\frac1{(p-1)^2}\right)
\prod_{\substack{p\le y\\ p\mid h}}\frac{p}{p-1}.
\end{equation}
Then, uniformly for $H,y\ge2$,
\begin{equation}\label{eq:finite-series-average}
\sum_{h\le H}\{C_y(h)-1\}\ll \log(2y).
\end{equation}
Consequently, uniformly for $Y\ge3$,
\begin{equation}\label{eq:weighted-finite-series-cancellation}
\sum_{2\le d_1<d_2\le Y}
\frac{V(d_1)V(d_2)}{d_1d_2}\{C_{d_1}(d_2-d_1)-1\}=O(1).
\end{equation}
\end{lemma}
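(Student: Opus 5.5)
The plan is to write $C_y(h)$ as an exact divisor sum and then use a complete-period average. Since $C_y(h)$ depends only on which primes $p\le y$ divide $h$, write $C_y(h)=\prod_{p\le y}c_p(h)$. Here $c_p(h)=1-\frac1{(p-1)^2}$ if $p\nmid h$, and $c_p(h)=\frac p{p-1}$ if $p\mid h$. Set $a_y(p)=c_p(0)-c_p(1)$, so that
\[
a_y(p)=\frac{p}{p-1}-1+\frac1{(p-1)^2}=\frac{p}{(p-1)^2}.
\]
Then $c_p(h)=c_p(1)+a_y(p)\one_{p\mid h}$, and expanding the product gives
\[
C_y(h)=\sum_{q\mid P(y),\ q\mid h}b_y(q),\qquad b_y(q)=\prod_{p\mid q}a_y(p)\prod_{p\le y,\,p\nmid q}c_p(1).
\]
This $b_y$ is nonnegative. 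The exact finite mean identity is that the average of $C_y$ over a complete period $P(y)$ equals $\prod_p\bigl(c_p(1)(1-\tfrac1p)+c_p(0)\tfrac1p\bigr)$. Each factor is
\[
\Bigl(1-\tfrac1{(p-1)^2}\Bigr)\tfrac{p-1}{p}+\tfrac1{p-1}=\tfrac{p-2}{p-1}+\tfrac1{p-1}=1,
\]
so $\sum_{q\mid P(y)}b_y(q)/q=1$.

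Summing over $h\le H$ and using $\sum_{h\le H}\one_{q\mid h}=H/q-\{H/q\}$ gives
\[
\sum_{h\le H}(C_y(h)-1)=-\sum_{q\mid P(y)}b_y(q)\{H/q\}.
\]
The leading term cancels exactly by the mean identity. It then remains to bound $\sum_{q\mid P(y)}b_y(q)\min(1,H/q)$ by $O(\log 2y)$, and the trivial bound $\sum_q b_y(q)$ suffices. Indeed $\prod_p c_p(1)\le1$, and for $p\ge3$ we have $a_y(p)/c_p(1)=\frac{p}{p(p-2)}=\frac1{p-2}$. Hence $\sum_q b_y(q)\ll\prod_{3\le p\le y}\bigl(1+\frac1{p-2}\bigr)\ll\log 2y$ by Mertens, and the factor at $p=2$ is bounded. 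This proves \eqref{eq:finite-series-average} uniformly in $H$, with no need for the $\{H/q\}$ oscillation.

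For \eqref{eq:weighted-finite-series-cancellation}, fix $d_1$ and sum over $d_2=d_1+h$ by partial summation in $h$. The weight $w(h)=V(d_1+h)/(d_1+h)$ is nonincreasing in $h$ and is $\ll 1/\bigl((d_1+h)\log(d_1+h)\bigr)$. Write $S(H)=\sum_{h\le H}(C_{d_1}(h)-1)$, so that $|S(H)|\ll\log 2d_1$ uniformly in $H$ by the first part. Abel summation over $1\le h\le Y-d_1$ then gives
\[
\sum_h w(h)(C_{d_1}(h)-1)\ll \log(2d_1)\Bigl(w(1)+\sum_h|w(h)-w(h+1)|\Bigr)\ll\log(2d_1)\,w(1)\ll\frac{1}{d_1}.
\]
The last step uses monotonicity, which telescopes the variation, and $V(d_1+1)\ll1/\log d_1$. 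Multiplying by $V(d_1)/d_1\ll1/(d_1\log d_1)$ and summing over $d_1$ gives the convergent sum $\sum_{d_1}1/(d_1^2\log d_1)=O(1)$.

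The main point to check carefully is the exact period identity together with the positivity of $b_y$. These let the trivial divisor-sum bound give only $\log y$ rather than something growing with $H$. One also needs monotonicity of $V(x)/x$ in integer steps, which is clear since $V$ is nonincreasing.
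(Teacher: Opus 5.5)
Your proposal is correct and is essentially the paper's argument: the same nonnegative divisor-sum expansion of $C_y(h)$, the exact complete-period identity $\sum_{q\mid P(y)}b_y(q)/q=1$ combined with the trivial bound $\sum_q b_y(q)=C_y(0)=\prod_{p\le y}\frac{p}{p-1}\ll\log(2y)$, and then Abel summation against the nonincreasing weight $V(d_1+h)/(d_1+h)$ to get $\ll 1/d_1$ before summing over $d_1$. The only nit is that for non-integer $H$ your identity for $\sum_{h\le H}(C_y(h)-1)$ picks up an extra term $\{H\}$, since $\sum_{h\le H}1=\lfloor H\rfloor$; this is harmlessly $O(1)$.
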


\begin{proof}
For each prime $p\le y$ put
\[
c_p=1-\frac1{(p-1)^2},\qquad b_p=\frac p{p-1},\qquad \Delta_p=b_p-c_p.
\]
For $p=2$ this gives $c_2=0$ and $\Delta_2=2$; for $p>2$,
\[
\Delta_p=\frac{p}{(p-1)^2}>0.
\]
Thus
\[
C_y(h)=\prod_{p\le y}\bigl(c_p+\Delta_p\one_{p\mid h}\bigr)
      =\sum_{q\mid P(y)} a_y(q)\one_{q\mid h},
\]
where $P(y)=\prod_{p\le y}p$ and all coefficients $a_y(q)$ are non-negative.  Two exact identities are needed.  First,
\[
\sum_{q\mid P(y)} a_y(q)=C_y(0)=\prod_{p\le y}\frac p{p-1}\ll\log(2y)
\]
by Mertens' theorem.  Second, the complete-period average is exactly one:
\[
\sum_{q\mid P(y)}\frac{a_y(q)}q
 =\prod_{p\le y}\left(c_p+\frac{\Delta_p}{p}\right)=1.
\]
Indeed, for $p=2$ this factor is $0+2/2=1$, while for $p>2$ it is
\[
1-\frac1{(p-1)^2}+\frac{1}{p}\frac{p}{(p-1)^2}=1.
\]
Therefore
\[
\sum_{h\le H}C_y(h)
=\sum_{q\mid P(y)}a_y(q)\left\lfloor\frac Hq\right\rfloor
=H\sum_{q\mid P(y)}\frac{a_y(q)}q+O\left(\sum_{q\mid P(y)}a_y(q)\right)
=H+O(\log(2y)).
\]
This proves \eqref{eq:finite-series-average}.  Notice that the floor estimate
$|\lfloor H/q\rfloor-H/q|\le1$ is valid for every $q$, including $q>H$; this is why the truncation of the divisor expansion causes no loss.

For the weighted consequence, fix $y$ and write
\[
A_y(t)=\sum_{h\le t}\{C_y(h)-1\}.
\]
By \eqref{eq:finite-series-average}, $|A_y(t)|\ll\log(2y)$ uniformly in $t$.  Put
\[
w_y(h)=\frac{V(y+h)}{y+h}.
\]  The sequence $w_y(h)$ is non-negative and non-increasing in $h$, since both $V(y+h)$ and $(y+h)^{-1}$ are non-increasing.  Abel summation gives, for $H=Y-y$,
\[
\sum_{1\le h\le H}\{C_y(h)-1\}w_y(h)
=A_y(H)w_y(H)+\sum_{1\le h<H}A_y(h)\{w_y(h)-w_y(h+1)\}.
\]
Thus
\begin{align*}
\left|\sum_{1\le h\le H}\{C_y(h)-1\}w_y(h)\right|
&\le \sup_{t\le H}|A_y(t)|
\left(w_y(H)+\sum_{1\le h<H}(w_y(h)-w_y(h+1))\right)\\
&\le \sup_{t\le H}|A_y(t)|w_y(1).
\end{align*}
Since $w_y(1)\ll1/(y\log(2y))$, we obtain
\[
\left|\sum_{1\le h\le Y-y}\frac{V(y+h)}{y+h}\{C_y(h)-1\}\right|
\ll \frac1y.
\]
Multiplying by $V(y)/y\ll1/(y\log(2y))$ and summing over $2\le y\le Y$ yields
\[
\sum_{2\le y\le Y}\frac{V(y)}y
\left|\sum_{1\le h\le Y-y}\frac{V(y+h)}{y+h}\{C_y(h)-1\}\right|
\ll \sum_{y\ge2}\frac1{y^2\log(2y)}=O(1).
\]
This proves \eqref{eq:weighted-finite-series-cancellation}.
\end{proof}

\begin{lemma}[Summability of main terms and sieve errors]\label{lem:second-moment-summability}
Uniformly for $Y\ge3$,
\begin{align}
\sum_{d\le Y}\frac{V(d)}d&\ll\log\log(2Y),\label{eq:sum-V-over-d}\\
\sum_{2\le d_1<d_2\le Y}\frac{V(d_1,d_2;d_2-d_1)}{d_1d_2}
&\ll (\log\log(2Y))^2,\label{eq:sum-pair-main-positive}\\
\sum_{2\le d_1<d_2\le Y}\frac{V(d_1)V(d_2)}{d_1d_2}
\{C_{d_1}(d_2-d_1)-1\}&=O(1).\label{eq:sum-pair-covariance}
\end{align}
Moreover, if $Y=N^{o(1)}$, then
\begin{align}
N^{1/2+o(1)}\left(\sum_{d\le Y}\frac1d\right)^2&=o(N),\label{eq:additive-pair-summable}\\
N^{1/2+o(1)}\sum_{d\le Y}\frac1d&=o(N).\label{eq:additive-one-summable}
\end{align}
\end{lemma}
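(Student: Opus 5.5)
The five estimates are largely independent, and I would prove them one at a time from Mertens' theorem and Lemma~\ref{lem:finite-singular-series-average}.

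For \eqref{eq:sum-V-over-d}, Mertens gives $V(d)\ll 1/\log(2d)$, so
\[
\sum_{d\le Y}\frac{V(d)}{d}\ll \sum_{d\le Y}\frac1{d\log(2d)}\ll\log\log(2Y)
\]
by \eqref{eq:int-loglog}.

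For \eqref{eq:sum-pair-main-positive} I would not expand the signed structure. I would bound $V(d_1,d_2;h)$ pointwise by $V(d_1)V(d_2)C_{d_1}(h)$; this is an exact identity, obtained by dividing \eqref{eq:V12-def} by $V(d_1)V(d_2)$, which recovers \eqref{eq:Cy-def}. Then I would use $C_{d_1}(h)\ge0$ and split $C_{d_1}(h)=1+\{C_{d_1}(h)-1\}$:
\begin{itemize}
\item the term $1$ gives $\bigl(\sum_{d\le Y}V(d)/d\bigr)^2\ll(\log\log 2Y)^2$ by \eqref{eq:sum-V-over-d};
\item the signed term is $O(1)$ by \eqref{eq:weighted-finite-series-cancellation}.
\end{itemize}
This gives \eqref{eq:sum-pair-main-positive}. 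Alternatively, a crude positive bound follows from $\sum_{h\le H}C_y(h)\ll H$ (from \eqref{eq:finite-series-average} with $H\ge y$, and $C_y(h)\ll\log 2y$ otherwise) together with partial summation. Either route works; the first is shorter.

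Estimate \eqref{eq:sum-pair-covariance} is literally \eqref{eq:weighted-finite-series-cancellation} of Lemma~\ref{lem:finite-singular-series-average}, restated, so I would simply cite it. The identity check is the only thing to verify: the weight $V(y+h)/(y+h)$ with $y=d_1$ and $h=d_2-d_1$ is exactly $V(d_2)/d_2$.

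For the additive bounds \eqref{eq:additive-pair-summable} and \eqref{eq:additive-one-summable}, I would use $\sum_{d\le Y}1/d\le 1+\log Y=N^{o(1)}$ when $Y=N^{o(1)}$; indeed $\log Y=o(\log N)$ is far smaller than any power of $N$. Hence both left sides are $N^{1/2+o(1)}=o(N)$.

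The only point needing care is that the $o(1)$ exponents are uniform, as guaranteed in Lemma~\ref{lem:two-dim-fundamental}. Beyond that uniformity there is no real obstacle here; the substantive cancellation was already done in Lemma~\ref{lem:finite-singular-series-average}.
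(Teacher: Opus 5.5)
Your proposal is correct and follows the paper's proof essentially verbatim: Mertens for \eqref{eq:sum-V-over-d}, the exact factorization $V(d_1,d_2;h)=V(d_1)V(d_2)C_{d_1}(h)$ with the split $C_{d_1}=1+\{C_{d_1}-1\}$ and Lemma~\ref{lem:finite-singular-series-average} for \eqref{eq:sum-pair-main-positive} and \eqref{eq:sum-pair-covariance}, and $\sum_{d\le Y}1/d\ll\log Y=N^{o(1)}$ for the additive bounds. In your optional alternative route, the pointwise bound $C_y(h)\ll\log 2y$ only gives $\sum_{h\le H}C_y(h)\ll H\log 2y$ when $H<y$, so that route should use $\sum_{h\le H}C_y(h)\le H+O(\log 2y)$ from \eqref{eq:finite-series-average} directly in the Abel summation; your main route does not need this.
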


\begin{proof}
The first estimate is Mertens' theorem in the form $V(d)\ll1/\log(2d)$.  For the second, use
\[
V(d_1,d_2;d_2-d_1)=V(d_1)V(d_2)C_{d_1}(d_2-d_1).
\]
Thus the left side of \eqref{eq:sum-pair-main-positive} equals
\[
\sum_{d_1<d_2}\frac{V(d_1)V(d_2)}{d_1d_2}
+
\sum_{d_1<d_2}\frac{V(d_1)V(d_2)}{d_1d_2}\{C_{d_1}(d_2-d_1)-1\}.
\]
The first term is $O((\log\log(2Y))^2)$ by \eqref{eq:sum-V-over-d}, and the second is $O(1)$ by Lemma \ref{lem:finite-singular-series-average}.  This proves both \eqref{eq:sum-pair-main-positive} and \eqref{eq:sum-pair-covariance}.  Finally, if $Y=N^{o(1)}$, then $\sum_{d\le Y}1/d\ll\log Y=N^{o(1)}$, so \eqref{eq:additive-pair-summable} and \eqref{eq:additive-one-summable} follow.
\end{proof}

For $Y\ge1$ define the truncated self-rough sum
\[
\LL_Y(n)=\sum_{\substack{d\le Y\\ d<n}}\frac{I_d(n)}d,
\]
and set $V(1)=1$; for real $x\ge2$, $V(x)=\prod_{p\le x}(1-1/p)$.  The next theorem is the point at which Lemma \ref{lem:finite-singular-series-average} is used: that lemma cancels the positive even-gap correlations against the exact negative odd-gap contribution in a signed way.

Before stating the second-moment estimate, we record the parameter ledger for the choice used in the almost-all theorem.  We take
\[
\beta_N=(\log\log N)^{-1/2},\qquad
Y=N^{\beta_N},\qquad Q=N^{1/2},\qquad
s=\frac{\log Q}{\log Y}=\frac1{2\beta_N}.
\]
Then $Y=N^{o(1)}$, $Y\to\infty$, and the beta-sieve relative error satisfies
\[
\eta_N(\log\log N)^2=o(1).
\]
Also
\[
N^{1/2+o(1)}(\log Y)^2
\le N^{1/2+o(1)}(\log N)^2=o(N),
\]
which is the only estimate needed to sum all additive sieve remainders in the proof below.

\begin{theorem}[Truncated second moment]\label{thm:truncated-second-moment}
Let $\beta_N\to0$ with $\beta_N\log N\to\infty$, and put $Y=N^{\beta_N}$.  Let $\eta_N$ be the relative error in Lemma \ref{lem:two-dim-fundamental}.  Then
\begin{equation}\label{eq:truncated-second-moment-precise}
\sum_{N<n\le2N}\left|\sum_{d\le Y}\frac{I_d(n)}d
-
\sum_{d\le Y}\frac{V(d)}d\right|^2
\ll
N+
\eta_N N(\log\log N)^2+o(N).
\end{equation}
In particular, if $\beta_N=(\log\log N)^{-1/2}$, then
\begin{equation}\label{eq:truncated-second-moment}
\sum_{N<n\le2N}\left|\sum_{d\le Y}\frac{I_d(n)}d
-
\sum_{d\le Y}\frac{V(d)}d\right|^2
=O(N)=o\bigl(N(\log\log N)^2\bigr).
\end{equation}
\end{theorem}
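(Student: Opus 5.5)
The plan is to expand the square and evaluate each piece with the fundamental lemmas already in place. Write $S(n)=\sum_{d\le Y}I_d(n)/d$ and $\mu=\sum_{d\le Y}V(d)/d$. Since $Y=N^{o(1)}<N<n$, the condition $d<n$ is automatic. Then
\[
\sum_{N<n\le2N}(S(n)-\mu)^2=\sum_n S(n)^2-2\mu\sum_n S(n)+N\mu^2 .
\]
For $d=1$ we have $I_1\equiv1$ and $V(1)=1$, so this term cancels identically between $S$ and $\mu$. It therefore suffices to work with $2\le d\le Y$, adjusting $\mu$ accordingly.

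\textbf{Linear term.} By \eqref{eq:one-dim-fundamental-for-variance},
\[
\sum_n S(n)=N\mu+O\Bigl(\eta_N N\sum_{d\le Y}V(d)/d\Bigr)+O\Bigl(N^{1/2+o(1)}\sum_{d\le Y}1/d\Bigr).
\]
Using $\mu\ll\log\log N$ from \eqref{eq:sum-V-over-d} together with \eqref{eq:additive-one-summable}, the cross term $-2\mu\sum_nS(n)$ equals $-2N\mu^2+O(\eta_N N(\log\log N)^2)+o(N)$.

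\textbf{Quadratic term.} Expand
\[
\sum_n S(n)^2=\sum_{d}\frac1{d^2}\sum_n I_d(n)+2\sum_{2\le d_1<d_2\le Y}\frac1{d_1d_2}\sum_nI_{d_1}(n)I_{d_2}(n).
\]
The diagonal is trivially $\le N\sum 1/d^2=O(N)$. For the off-diagonal, Lemma \ref{lem:two-dim-fundamental} (including the parity-zero case of odd $h$) gives
\[
N\sum_{d_1<d_2}\frac{V(d_1,d_2;h)}{d_1d_2}\bigl(1+O(\eta_N)\bigr)+O\Bigl(N^{1/2+o(1)}\bigl(\textstyle\sum 1/d\bigr)^2\Bigr).
\]
The additive error is $o(N)$ by \eqref{eq:additive-pair-summable}. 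The relative error is $O(\eta_N N(\log\log N)^2)$ by \eqref{eq:sum-pair-main-positive}; non-negativity of $V(d_1,d_2;h)$ is what allows the relative error to be bounded by the positive sum.

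\textbf{Main-term assembly.} Factor $V(d_1,d_2;h)=V(d_1)V(d_2)C_{d_1}(h)$, which is a direct check from \eqref{eq:V12-def} and \eqref{eq:Cy-def}. Then split $C_{d_1}(h)=1+\{C_{d_1}(h)-1\}$. The ``$1$'' part gives
\[
2N\sum_{d_1<d_2}\frac{V(d_1)V(d_2)}{d_1d_2}=N\mu^2-N\sum_d\frac{V(d)^2}{d^2}=N\mu^2+O(N).
\]
The remaining signed part is $O(N)$ by \eqref{eq:sum-pair-covariance}, i.e.\ by Lemma \ref{lem:finite-singular-series-average}. Collecting terms, the $N\mu^2$ contributions cancel, $N\mu^2-2N\mu^2+N\mu^2=0$, leaving $O(N)+O(\eta_NN(\log\log N)^2)+o(N)$. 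This is \eqref{eq:truncated-second-moment-precise}.

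\textbf{Specialisation.} For $\beta_N=(\log\log N)^{-1/2}$ we have $\eta_N\ll e^{-c\sqrt{\log\log N}}$, so $\eta_N(\log\log N)^2=o(1)$, which gives \eqref{eq:truncated-second-moment}.

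\textbf{Main obstacle.} The delicate point is the bookkeeping that makes the $N\mu^2$ terms cancel exactly rather than up to $O(\eta_N N\mu^2)$ slop. One must keep the relative sieve errors tied to the positive quantities $V(d)$ and $V(d_1,d_2;h)$, which both \eqref{eq:one-dim-fundamental-for-variance} and \eqref{eq:two-dim-fundamental} permit. One must also use the signed cancellation of Lemma \ref{lem:finite-singular-series-average} rather than any absolute bound on $C_{d_1}(h)-1$, since absolute bounds would lose a factor $\log\log Y$. The substantive input is already in place; what remains is to check uniformity of the sieve remainders over all $d_1<d_2\le Y$.
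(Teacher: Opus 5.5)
Your proposal is correct and follows the paper's proof essentially step for step: you strip off the identically cancelling $d=1$ term and expand the square. You then treat the cross term with the one-dimensional fundamental lemma and the off-diagonal with Lemma~\ref{lem:two-dim-fundamental}. Factoring $V(d_1,d_2;h)=V(d_1)V(d_2)C_{d_1}(h)$ makes the ``$1$'' part cancel against $N\mu^2$ up to the $O(N)$ diagonal of $\mu^2$, and the signed remainder is handled by Lemma~\ref{lem:finite-singular-series-average}, with the relative sieve errors summed against the non-negative main terms. The only cosmetic difference is that you bound the diagonal trivially by $N\sum_d d^{-2}$ rather than through the one-dimensional sieve estimate, which changes nothing.
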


\begin{proof}
The term $d=1$ is identically $I_1(n)=1$ on this dyadic interval, and $V(1)=1$ by the empty-product convention.  It is therefore removed before expanding.  Put
\[
\widetilde\LL_Y(n)=\sum_{2\le d\le Y}\frac{I_d(n)}d,
\qquad
\widetilde\mu_Y=\sum_{2\le d\le Y}\frac{V(d)}d.
\]
Then the expression in \eqref{eq:truncated-second-moment-precise} is exactly $\widetilde\LL_Y(n)-\widetilde\mu_Y$, and no $d=1$ cross-term occurs below.  We expand
\[
\sum_{N<n\le2N}|\widetilde\LL_Y(n)-\widetilde\mu_Y|^2
=\sum_{N<n\le2N}\widetilde\LL_Y(n)^2
-2\widetilde\mu_Y\sum_{N<n\le2N}\widetilde\LL_Y(n)
+N\widetilde\mu_Y^2.
\]
The one-dimensional estimate \eqref{eq:one-dim-fundamental-for-variance} gives
\[
\sum_{N<n\le2N}\widetilde\LL_Y(n)
=N\widetilde\mu_Y+O\!\left(\eta_N N\widetilde\mu_Y\right)+O\!\left(N^{1/2+o(1)}\sum_{2\le d\le Y}\frac1d\right).
\]
By Lemma \ref{lem:second-moment-summability}, this error contributes
\[
O\left(\eta_N N(\log\log N)^2\right)+o(N)
\]
after multiplication by $\widetilde\mu_Y\ll\log\log N$.

For the second moment, the diagonal terms contribute
\[
\sum_{2\le d\le Y}\frac1{d^2}\sum_{N<n\le2N} I_d(n)
\ll N\sum_{2\le d\le Y}\frac{V(d)}{d^2}+N^{1/2+o(1)}\sum_{2\le d\le Y}\frac1{d^2}=O(N).
\]
For $2\le d_1<d_2\le Y$, Lemma \ref{lem:two-dim-fundamental} gives
\[
\sum_{N<n\le2N}I_{d_1}(n)I_{d_2}(n)
=N V(d_1,d_2;d_2-d_1)
+O\bigl(\eta_N N V(d_1,d_2;d_2-d_1)\bigr)
+O(N^{1/2+o(1)}).
\]
The main off-diagonal term, after subtracting the corresponding part of $N\widetilde\mu_Y^2$, is
\[
2N\sum_{2\le d_1<d_2\le Y}
\frac{V(d_1)V(d_2)}{d_1d_2}\{C_{d_1}(d_2-d_1)-1\},
\]
which is $O(N)$ by \eqref{eq:sum-pair-covariance}.  The diagonal part of $N\widetilde\mu_Y^2$ contributes only
\[
O\left(N\sum_d \frac{V(d)^2}{d^2}\right)=O(N).
\]
The relative-error terms are bounded by
\[
\eta_N N
\sum_{2\le d_1<d_2\le Y}\frac{V(d_1,d_2;d_2-d_1)}{d_1d_2}
\ll \eta_N N(\log\log N)^2
\]
by \eqref{eq:sum-pair-main-positive}.  The accumulated level-of-distribution remainders are
\[
N^{1/2+o(1)}\left(\sum_{2\le d\le Y}\frac1d\right)^2=o(N)
\]
by \eqref{eq:additive-pair-summable}.  Combining these estimates proves \eqref{eq:truncated-second-moment-precise}.  With
$\beta_N=(\log\log N)^{-1/2}$, Lemma \ref{lem:two-dim-fundamental} gives
$\eta_N\ll\exp(-c\sqrt{\log\log N})$, so the relative-error term is $o(N)$, proving \eqref{eq:truncated-second-moment}.
\end{proof}

We now prove the second main theorem stated in the introduction.

\begin{proof}[Proof of Theorem \ref{thm:main-aa}]
Choose $\beta_N=(\log\log N)^{-1/2}$ and $Y=N^{\beta_N}$.  In this proof
\[
\LL_Y(n)=\sum_{d\le Y}\frac{I_d(n)}d,
\qquad
\mu_Y=\sum_{d\le Y}\frac{V(d)}d,
\]
with the convention $V(1)=1$.  By Mertens' theorem,
\[
\mu_Y
=(e^{-\gamma}+o(1))\sum_{d\le Y}\frac1{d\log d}
=(e^{-\gamma}+o(1))\log\log N.
\]
Theorem \ref{thm:truncated-second-moment} and Chebyshev's inequality give, for each fixed $\eta>0$,
\begin{equation}\label{eq:LY-exception-rate}
\#\{N<n\le2N: |\LL_Y(n)-\mu_Y|>\eta\log\log N\}
\ll_\eta \frac{N}{\log\log N}.
\end{equation}
The tail $d>Y$ has first moment
\begin{align*}
\sum_{N<n\le2N}\sum_{Y<d<n}\frac{I_d(n)}d
&\le \sum_{Y<d<2N}\frac1d\,
\#\{N<n\le2N:\ d<n,\ \Pmin(n-d)>d\}.
\end{align*}
For fixed $d$, put $m=n-d$.  The counted integers $m$ lie in an interval contained in $[1,2N-d]$, and hence their number is at most
\[
\R(2N-d,d)\ll \frac{2N-d}{\log d}+1\ll \frac N{\log d}+1
\]
by Lemma \ref{lem:rough}(ii).  Therefore
\begin{align*}
\sum_{N<n\le2N}\sum_{Y<d<n}\frac{I_d(n)}d
&\ll N\sum_{Y<d<2N}\frac1{d\log d}
+\sum_{Y<d<2N}\frac1d\\
&=O\bigl(N\log(1/\beta_N)\bigr)+O(\log N)
=O(N\log\log\log N).
\end{align*}
Markov's inequality shows that the tail exceeds $\eta\log\log N$ for at most
\begin{equation}\label{eq:tail-exception-rate}
O_\eta\left(\frac{N\log\log\log N}{\log\log N}\right)
\end{equation}
integers $n$ in the dyadic interval.  Thus
\[
\LL(n)=(e^{-\gamma}+o(1))\log\log N
\]
for all but $O_\eta(N\log\log\log N/\log\log N)$ integers in $N<n\le2N$.  Combining this with Theorem \ref{thm:almost-all}, and replacing $\log\log N$ by $\log\log n$ as before, gives \eqref{eq:main-aa-intro}.  Choose $0<\eta_0<(1-e^{-\gamma})/4$.  On the density-one set where \eqref{eq:main-aa-intro} holds with error at most $\eta_0\log\log n$, one has, for all sufficiently large $n$,
\[
\M(n)\le (e^{-\gamma}+\eta_0)\log\log n
< (1-2\eta_0)\log\log n.
\]
Mertens' theorem gives $\sum_{p<n}1/p\ge (1-\eta_0)\log\log n$ for all sufficiently large $n$.  Thus the Erd\H{o}s inequality holds on this density-one set with room to spare; enlarging a constant $C$ absorbs finitely many small values of $n$.

It remains to prove the global quantitative bound \eqref{eq:main-aa-exception-intro}.  Summing the dyadic estimate just proved, the dyadic block $2^j<n\le2^{j+1}$ contributes
\[
O_\eta\!\left(\frac{2^j\log\log\log 2^j}{\log\log 2^j}\right).
\]
The sum over $j\le \log_2 N$ is dominated by the last block:
\[
\sum_{j\le \log_2 N}\frac{2^j\log\log\log 2^j}{\log\log 2^j}
\ll \frac{N\log\log\log N}{\log\log N}.
\]
The initial segment $n\le \sqrt N$ is negligible compared with this bound, and on $\sqrt N<n\le N$ one has $\log\log n\asymp\log\log N$.  This proves \eqref{eq:main-aa-exception-intro}.
\end{proof}

\begin{corollary}[Quantitative exceptional set for the Erd\H{o}s inequality]\label{cor:exception-rate}
Taking, for example, any fixed $0<\eta<(1-e^{-\gamma})/2$, the Erd\H{o}s inequality holds for all sufficiently large $n$ outside a set of size
\[
O_\eta\!\left(\frac{N\log\log\log N}{\log\log N}\right)
\]
in $[1,N]$.
\end{corollary}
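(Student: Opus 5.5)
The plan is to read the corollary off the quantitative estimate \eqref{eq:main-aa-exception-intro} of Theorem \ref{thm:main-aa}, combined with Mertens' theorem \eqref{eq:mertens-prime}.

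Fix $0<\eta<(1-e^{-\gamma})/2$. Theorem \ref{thm:main-aa} bounds the set
\[
E_\eta(N)=\{n\le N:|\M(n)-e^{-\gamma}\log\log n|>\eta\log\log n\}
\]
by $\#E_\eta(N)\ll_\eta N\log\log\log N/\log\log N$. For $n\notin E_\eta(N)$ we have $\M(n)\le(e^{-\gamma}+\eta)\log\log n$. Mertens' theorem gives
\[
\sum_{p<n}\frac1p=\log\log n+B_1+O(1/\log n),
\]
where $p<n$ versus $p\le n$ changes this by at most $1/n$. Since $\eta<(1-e^{-\gamma})/2$, we have $e^{-\gamma}+\eta<1-\eta$. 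Hence for $n\ge n_0(\eta)$,
\[
\M(n)\le(1-\eta)\log\log n\le\sum_{p<n}\frac1p+C
\]
with any absolute $C\ge |B_1|+1$. In fact the slack $\eta\log\log n$ absorbs the constant, so this holds even with $C=0$ for large $n$.

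The exceptional set in $[1,N]$ is therefore contained in $E_\eta(N)\cup[1,n_0(\eta))$. Its size is
\[
O_\eta\!\left(\frac{N\log\log\log N}{\log\log N}\right)+O_\eta(1),
\]
and the $O_\eta(1)$ is absorbed. The phrase ``all sufficiently large $n$ outside a set'' is exactly this statement, with the finitely many small $n$ handled by the threshold.

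There is no genuine obstacle here; the only care needed is in choosing the constants. Because $\eta<(1-e^{-\gamma})/2$ leaves a positive gap $1-e^{-\gamma}-2\eta>0$, the inequality $\M(n)\le(1-\eta)\log\log n$ holds on the good set rather than just $\M(n)\le\log\log n$. That extra $\eta\log\log n$ is what swallows $B_1$ and the $O(1/\log n)$ term, uniformly once $n\ge n_0(\eta)$.
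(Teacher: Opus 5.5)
Your proposal is correct and is the same argument as the paper's proof. The paper likewise reads the corollary off the exceptional-set bound \eqref{eq:main-aa-exception-intro} of Theorem~\ref{thm:main-aa}, combined with Mertens' theorem and $e^{-\gamma}<1$; you simply spell out the details it leaves implicit, namely using the gap $e^{-\gamma}+\eta<1-\eta$ to absorb $B_1$ and the $O(1/\log n)$ term, and the threshold $n_0(\eta)$ for small $n$.
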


\begin{proof}
This is the quantitative estimate \eqref{eq:main-aa-exception-intro} from Theorem \ref{thm:main-aa}, combined with Mertens' theorem and $e^{-\gamma}<1$.
\end{proof}

\begin{remark}[What remains for full variance and limiting laws]\label{rem:full-variance-remains}
Theorem \ref{thm:truncated-second-moment} proves the normalized second-moment estimate needed for the almost-all theorem.  It does not prove the full untruncated bound
\begin{equation}\label{eq:full-L-variance-target}
\sum_{N<n\le2N}\left|\LL(n)-\overline\LL_N\right|^2=O(N),
\qquad
\overline\LL_N=N^{-1}\sum_{N<n\le2N}\LL(n).
\end{equation}
The obstruction is the high-level tail
\[
T_Y(n)=\sum_{Y<d<n}\frac{I_d(n)}d.
\]
Its first moment is small enough for Markov's inequality at the $\log\log n$ scale, but an $L^2$ estimate for this tail is not a routine extension of Lemma \ref{lem:two-dim-fundamental}.  That lemma is a long-interval fundamental-lemma statement with levels $d\le N^{o(1)}$.  Once $d$ is a fixed power of $N$, the Buchstab parameter is finite; once $d>\sqrt{2N}$ on the dyadic interval $N<n\le2N$, a genuinely shifted-prime problem appears.  The next propositions record these two barriers in theorem-level form.
\end{remark}

\subsection{Conditional full-variance and fixed-power directions}
Everything in this subsection is conditional or programmatic and is not used in the proof of Theorems \ref{thm:main}, \ref{thm:almost-all}, or \ref{thm:main-aa}.  The purpose is to isolate exactly which additional estimates would upgrade the normal-order statement to full variance or fixed-power truncation theorems.

\begin{proposition}[The shifted-prime form of the prime tail]\label{prop:shifted-prime-tail}
For $N<n\le2N$ and all sufficiently large $N$,
\begin{equation}\label{eq:prime-tail-identity}
\sum_{\sqrt{2N}<d<n}\frac{I_d(n)}d
=
\frac1{n-1}+
\sum_{\substack{n/2<p<n-\sqrt{2N}\\ p\ \mathrm{prime}}}
\frac1{n-p}.
\end{equation}
More generally, if $Z\ge\sqrt{2N}$, then
\begin{equation}\label{eq:prime-tail-identity-Z}
\sum_{Z<d<n}\frac{I_d(n)}d
=
\frac{\mathbf 1_{\{Z<n-1\}}}{n-1}+
\sum_{\substack{n/2<p<n-Z\\ p\ \mathrm{prime}}}
\frac1{n-p},
\end{equation}
with the prime sum interpreted as empty if $n-Z\le n/2$.
\end{proposition}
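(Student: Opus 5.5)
Substitute $p=n-d$ and classify $m=n-d$ by the size of its least prime factor; the identity is then an exact combinatorial statement, with no sieve input. For $Z<d<n$ put $m=n-d$, so $1\le m<n-Z$ and $I_d(n)=\one_{\{\Pmin(m)>n-m\}}$. There are three kinds of $m$.

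\emph{The case $m=1$.} Here $\Pmin(1)=\infty$, so $d=n-1$ always contributes. It lies in the range $Z<d<n$ exactly when $Z<n-1$, which gives the term $\one_{\{Z<n-1\}}/(n-1)$.

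\emph{The case $m$ composite.} If $m\ge2$ is composite, then $\Pmin(m)\le\sqrt m<\sqrt{2N}$, because $m<n\le2N$. On the other hand $d=n-m>Z\ge\sqrt{2N}$. So $\Pmin(m)\le d$ and $I_d(n)=0$: composite values of $m$ never contribute.

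\emph{The case $m=p$ prime.} The condition $\Pmin(p)>d$ reads $p>n-p$, that is, $p>n/2$. Combined with $d>Z$, i.e.\ $p<n-Z$, the surviving primes are exactly those with $n/2<p<n-Z$. Each contributes $1/d=1/(n-p)$, which is the prime sum in \eqref{eq:prime-tail-identity-Z}. If $n-Z\le n/2$ this range is empty, matching the stated convention.

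Adding the three cases proves \eqref{eq:prime-tail-identity-Z}. Taking $Z=\sqrt{2N}$ gives \eqref{eq:prime-tail-identity}, where the indicator equals $1$ because $\sqrt{2N}<N\le n-1$ for all sufficiently large $N$. The only point needing care is the composite case, specifically the strictness of $\sqrt m<\sqrt{2N}\le Z<d$. This holds since $m\le n-1<2N$.
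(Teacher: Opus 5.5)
Your proof is correct and follows essentially the paper's argument: substitute $m=n-d$, discard composite $m$ via $\Pmin(m)\le\sqrt m<\sqrt{2N}\le Z<d$, and read off the term $m=1$ (present exactly when $Z<n-1$) together with the primes $n/2<p<n-Z$. You are also right that largeness of $N$ is needed only to make the indicator equal $1$ when $Z=\sqrt{2N}$; note that your opening phrase ``substitute $p=n-d$'' should read $m=n-d$.
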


\begin{proof}
Put $m=n-d$.  If $d>\sqrt{2N}$ and $I_d(n)=1$, then $\Pmin(m)>d$.  If $m>1$ were composite, then
\[
\Pmin(m)\le \sqrt m\le \sqrt{2N}<d
\]
for large $N$, a contradiction.  Hence $m=1$ or $m=p$ is prime.  The term $m=1$ gives $d=n-1$ and contributes $1/(n-1)$ in \eqref{eq:prime-tail-identity}; in the general form it contributes only when $Z<n-1$, giving the indicator in \eqref{eq:prime-tail-identity-Z}.  If $m=p$ is prime, the condition $\Pmin(p)>d$ is exactly $p>d=n-p$, or $p>n/2$.  The additional condition $d>\sqrt{2N}$ is $p<n-\sqrt{2N}$.  This proves \eqref{eq:prime-tail-identity}, and \eqref{eq:prime-tail-identity-Z} is identical.
\end{proof}

\begin{proposition}[One-dimensional finite-\texorpdfstring{$u$}{u} Buchstab estimate]\label{prop:one-dim-finite-u}
Fix $0<\theta_0<1$.  Uniformly for $3\le d\le N^{\theta_0}$,
\begin{align*}
\sum_{N<n\le2N} I_d(n)
&=\R(2N-d,d)-\R(N-d,d)\\
&=\frac{(2N-d)\omega\!\left(\frac{\log(2N-d)}{\log d}\right)
-(N-d)\omega\!\left(\frac{\log(N-d)}{\log d}\right)}{\log d}
+O_{\theta_0}\!\left(\frac{N}{(\log d)^2}+1\right).
\end{align*}
\end{proposition}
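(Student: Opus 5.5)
The first equality is a change of variables; the second comes from applying \eqref{eq:buchstab-standard} twice.

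For the first equality, put $m=n-d$ for fixed $d$. As $n$ runs over $N<n\le2N$, $m$ runs over the integers in $(N-d,2N-d]$. The condition $I_d(n)=1$ is exactly $\Pmin(m)>d$. Since $d\le N^{\theta_0}<N$ for large $N$, we have $N-d\ge1$, so the range of $m$ lies inside the positive integers. Hence
\[
\sum_{N<n\le2N}I_d(n)=\R(2N-d,d)-\R(N-d,d).
\]
The constraint $d<n$ in the definition of $I_d(n)$ holds automatically, because $d<N<n$. Finitely many small $N$ can be absorbed into the implied constant.

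For the second equality, apply \eqref{eq:buchstab-standard} with $y=d$ and with $x=2N-d$ and $x=N-d$ in turn. The only point to check is the hypothesis $x\ge y^{1+\delta_0}$ for some $\delta_0$ depending only on $\theta_0$. Since $d\le N^{\theta_0}$, we have $N-d\ge N/2$ for large $N$, so $\log(N-d)\ge\log N-\log 2$. Therefore
\[
\frac{\log(N-d)}{\log d}\ge \frac{\log N-\log 2}{\theta_0\log N}\ge 1+\delta_0
\]
for any fixed $\delta_0<(1-\theta_0)/(2\theta_0)$, say, once $N$ is large. The same bound holds a fortiori with $2N-d$ in place of $N-d$. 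Thus both applications fall in the fixed-power range $y\le x^{\theta}$, with $\theta=1/(1+\delta_0)<1$, where the paper notes that the cited theorem is uniform.

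Each application therefore yields the main term $x\,\omega(\log x/\log d)/\log d$ plus an error $O_{\theta_0}(x/(\log d)^2+1)$. Since $x\le 2N$, both errors are $O_{\theta_0}(N/(\log d)^2+1)$. Subtracting the two evaluations gives the displayed formula.

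The only step needing any care is this uniform verification that the Buchstab parameter stays bounded away from $1$ across the whole range $3\le d\le N^{\theta_0}$. There is no cancellation issue: the two main terms are simply kept separately. Small $d$ such as $d=3$ cause no difficulty, because the cited estimate holds for $2\le y\le x$ and $u$ is then large.
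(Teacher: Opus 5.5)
Your proposal is correct and matches the paper's proof: the substitution $m=n-d$, then two applications of the Buchstab--de Bruijn estimate with $y=d$, after checking that $\log(N-d)/\log d$ stays at least $1+\delta_0$ uniformly in the range. The only minor difference is in the bounded values $3\le d<d_0$. The paper disposes of these with the trivial $O(N)$ bound, absorbed into $O_{\theta_0}(N/(\log d)^2)$. You instead rely on the quoted estimate holding down to $y=2$. Either treatment works.
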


\begin{proof}
The first identity is the substitution $m=n-d$.  Fix a number $d_0\ge3$.  For $d\ge d_0$, the second identity is the uniform Buchstab--de Bruijn estimate for $\R(x,y)$ quoted in Lemma \ref{lem:rough}, applied with $x=2N-d$ and $x=N-d$ and with $y=d$.  Since $d\le N^{\theta_0}$ and $\theta_0<1$, choose $\delta_0>0$ with $1+\delta_0<1/\theta_0$.  Then, for all large $N$ and all $d$ in this range, both $N-d$ and $2N-d$ are at least $d^{1+\delta_0}$.  Thus both applications stay in a finite-$u$ range bounded away from the transition point $u=1$.  For the finitely many $3\le d<d_0$, the displayed error term $O_{\theta_0}(N/(\log d)^2+1)$ is large enough to absorb the trivial bound $O(N)$; the $O(1)$ part only handles endpoint conventions.
\end{proof}

\begin{remark}[What remains in the fixed-power problem]
Proposition \ref{prop:one-dim-finite-u} proves the one-dimensional finite-$u$ input.  It is not enough for Hypothesis \ref{hyp:fixed-power-buchstab}.  The missing part is a two-dimensional Buchstab asymptotic for the pair of forms $m,m+h$ with split sieving levels.  A direct beta-sieve application at $d\asymp N^\theta$ has bounded sieve parameter $s$, so the standard upper and lower sieve functions no longer coalesce.  The long-range proof of Theorem \ref{thm:truncated-second-moment} avoids this because $\theta=\beta_N\to0$ and hence $s\to\infty$.  Thus the fixed-power problem requires a genuine finite-$u$ two-dimensional Buchstab theorem, not only the fundamental lemma used earlier.
\end{remark}

\begin{hypothesis}[Fixed-power two-point Buchstab input]\label{hyp:fixed-power-buchstab}
Fix $0<\theta<1/2$ and put $Y=N^\theta$.  For $2\le d\le Y$ define
\[
M_N(d)=N^{-1}\sum_{N<n\le2N} I_d(n).
\]
Assume that for every $2\le d_1<d_2\le Y$, with $h=d_2-d_1$, there are quantities $\mathcal B_N(d_1,d_2;h)$ and errors $E_N(d_1,d_2;h)$ such that
\begin{equation}\label{eq:fixed-power-two-point-input}
\sum_{N<n\le2N} I_{d_1}(n)I_{d_2}(n)=N\mathcal B_N(d_1,d_2;h)+E_N(d_1,d_2;h).
\end{equation}
Assume moreover the weighted signed covariance and absolute error conditions
\begin{align}
\left|
\sum_{2\le d_1<d_2\le Y}\frac{
\mathcal B_N(d_1,d_2;d_2-d_1)-M_N(d_1)M_N(d_2)}{d_1d_2}
\right|
&\ll_\theta \log\log N, \label{eq:fixed-power-main-summable}\\
\sum_{2\le d_1<d_2\le Y}\frac{|E_N(d_1,d_2;d_2-d_1)|}{d_1d_2}
&\ll_\theta N\log\log N. \label{eq:fixed-power-error-summable}
\end{align}
The first condition is deliberately signed.  An absolute-value version is false in the expected finite-$u$ model: for odd gaps the product $I_{d_1}(n)I_{d_2}(n)$ vanishes identically, giving negative covariance of size comparable to $M_N(d_1)M_N(d_2)$, while even gaps give positive singular-series covariance.  The necessary cancellation is the same even-versus-odd cancellation used in Lemma \ref{lem:finite-singular-series-average}.  These two explicit two-point estimates are the finite-$u$ Buchstab input.  They imply
\begin{equation}\label{eq:fixed-power-variance-hypothesis}
\sum_{N<n\le2N}\left|\sum_{d\le Y}\frac{I_d(n)}d-\sum_{d\le Y}\frac{M_N(d)}d\right|^2=O_\theta(N\log\log N).
\end{equation}
If the right side in \eqref{eq:fixed-power-main-summable} can be improved to $O_\theta(1)$ and the right side in \eqref{eq:fixed-power-error-summable} to $O_\theta(N)$, we call this the sharp fixed-power version; it gives variance $O_\theta(N)$ in \eqref{eq:fixed-power-variance-hypothesis}.
\end{hypothesis}

\begin{proposition}[Consequence of the fixed-power Buchstab input]\label{prop:fixed-power-consequence}
Under Hypothesis \ref{hyp:fixed-power-buchstab}, for all but $O_{\theta,\eta}(N/\log\log N)$ integers $N<n\le2N$, one has
\[
\left|\LL_{\le N^\theta}(n)-\mu_{N,\theta}\right|\le\eta\log\log N,
\qquad
\mu_{N,\theta}:=1+\sum_{2\le d\le N^\theta}\frac{M_N(d)}d.
\]
In particular, the fixed-power input is more than sufficient for the almost-all theorem on that truncated range.  Under the sharp fixed-power version the exceptional-set bound improves to $O_{\theta,\eta}(N/(\log\log N)^2)$.
\end{proposition}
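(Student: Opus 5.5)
The plan is to derive the proposition directly from the variance bound \eqref{eq:fixed-power-variance-hypothesis}, which is asserted inside Hypothesis~\ref{hyp:fixed-power-buchstab}, and then apply Chebyshev's inequality. I will first check that the hypothesis really does imply \eqref{eq:fixed-power-variance-hypothesis}, so the argument does not rest on an unverified claim.

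Set $Y=N^\theta$ and remove the term $d=1$ exactly as in the proof of Theorem~\ref{thm:truncated-second-moment}. On $N<n\le 2N$ we have $I_1(n)=1$, which accounts for the $1$ in $\mu_{N,\theta}$. With this normalisation, $\LL_{\le N^\theta}(n)-\mu_{N,\theta}=\widetilde\LL(n)-\widetilde\mu$, where
\[
\widetilde\LL(n)=\sum_{2\le d\le Y}\frac{I_d(n)}{d},\qquad \widetilde\mu=\sum_{2\le d\le Y}\frac{M_N(d)}{d}.
\]
Expanding the square, the cross term becomes $-2\widetilde\mu\sum_n\widetilde\LL(n)=-2N\widetilde\mu^2$. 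This holds exactly, because $M_N(d)$ is defined as the empirical mean. The variance therefore equals
\[
\sum_n\widetilde\LL(n)^2-N\widetilde\mu^2 .
\]

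The diagonal terms $d_1=d_2$ contribute at most $\sum_d N M_N(d)/d^2\le N\sum_d 1/d^2=O(N)$. The off-diagonal part is $2\sum_{d_1<d_2}(d_1d_2)^{-1}\sum_n I_{d_1}I_{d_2}$. Inserting \eqref{eq:fixed-power-two-point-input} and subtracting the off-diagonal part of $N\widetilde\mu^2$ leaves two pieces:
\[
2N\sum_{d_1<d_2}\frac{\mathcal B_N-M_N(d_1)M_N(d_2)}{d_1d_2}\qquad\text{and}\qquad 2\sum_{d_1<d_2}\frac{E_N}{d_1d_2}.
\]
By \eqref{eq:fixed-power-main-summable} the first is $O_\theta(N\log\log N)$, and by \eqref{eq:fixed-power-error-summable} so is the second. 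The diagonal part of $N\widetilde\mu^2$ is $O(N)$ and nonnegative, so dropping it is harmless. This gives variance $O_\theta(N\log\log N)$. Under the sharp version the same computation gives $O_\theta(N)$.

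Chebyshev's inequality then bounds the number of $n$ with $|\LL_{\le N^\theta}(n)-\mu_{N,\theta}|>\eta\log\log N$ by
\[
\frac{O_\theta(N\log\log N)}{\eta^2(\log\log N)^2}=O_{\theta,\eta}\!\left(\frac{N}{\log\log N}\right),
\]
and in the sharp case by $O_{\theta,\eta}(N/(\log\log N)^2)$.

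For the remark about sufficiency, I would compare this rate with the almost-all argument. That argument needs only $o(N)$ exceptions at scale $\eta\log\log N$, and this bound provides that. It also matches the rate \eqref{eq:LY-exception-rate} obtained from the truncated moment.

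There is no serious obstacle. The only point needing care is to check that the signed condition enters exactly: the subtraction of $M_N(d_1)M_N(d_2)$ must match the off-diagonal part of $N\widetilde\mu^2$ term by term. This works because the cross term equals $-2N\widetilde\mu^2$ exactly. Since $M_N(d)$ is the exact empirical mean and not a model density, no additional first-moment error arises.
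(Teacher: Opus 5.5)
Your proposal is correct and follows the paper's proof. You remove the $d=1$ term, expand the square, and use that $M_N(d)$ is the exact empirical mean, so the signed off-diagonal main term is exactly $2N$ times the left side of \eqref{eq:fixed-power-main-summable}; you then bound the diagonal and error terms and apply Chebyshev. Your explicit check that the cross term equals $-2N\widetilde\mu^2$ is the step the paper's terse proof leaves implicit.
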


\begin{proof}
The centering above uses exactly the same $d=1$ contribution as $\LL_{\le Y}$, namely $1$, so the $d=1$ term cancels.  Expanding the square over $2\le d\le Y$, using \eqref{eq:fixed-power-two-point-input} for the off-diagonal and the trivial diagonal bound $\sum_d d^{-2}\sum_{N<n\le2N}I_d(n)\ll N$, the signed main-term contribution is exactly the left side of \eqref{eq:fixed-power-main-summable} multiplied by $2N$, and the error contribution is bounded by \eqref{eq:fixed-power-error-summable}.  This gives \eqref{eq:fixed-power-variance-hypothesis}.  Chebyshev with threshold $\eta\log\log N$ gives the displayed exceptional-set bound.  In the sharp version the variance numerator is $O_\theta(N)$, giving $O_{\theta,\eta}(N/(\log\log N)^2)$.
\end{proof}

\begin{hypothesis}[Shifted-prime tail second moment at a specified cutoff]\label{hyp:shifted-prime-tail}
Let $Z_N$ be a specified cutoff satisfying $\sqrt{2N}\le Z_N<N$.  Assume that the centered shifted-prime tail
\[
\mathcal P_{Z_N}(n)=
\sum_{\substack{n/2<p<n-Z_N\\ p\ \mathrm{prime}}}\frac1{n-p}
-
N^{-1}\sum_{N<m\le2N}\sum_{\substack{m/2<p<m-Z_N\\ p\ \mathrm{prime}}}\frac1{m-p}
\]
satisfies
\begin{equation}\label{eq:shifted-prime-L2-hyp}
\sum_{N<n\le2N}|\mathcal P_{Z_N}(n)|^2=O(N).
\end{equation}
A uniform version would require this estimate for every cutoff in a specified family; a dyadic decomposition must require that the corresponding dyadic variances sum to $O(N)$.
\end{hypothesis}

For any dyadic function $F$ on $(N,2N]$, write
\[
\overline F_N=N^{-1}\sum_{N<n\le2N}F(n).
\]
The following criterion is intentionally conditional.  It is not used above.  Its hypotheses must be supplied by an explicit transition-range rough-correlation theorem and by a shifted-prime second-moment theorem; choosing the main terms to be exact counts would make the criterion tautological and is not the intended content.

\begin{proposition}[Conditional full variance criterion]\label{prop:conditional-full-variance}
Fix \(
0<\varepsilon<1/2
\)
and put \(Y_0=N^{1/2-\varepsilon}\).  Suppose the following three estimates hold on \((N,2N]\):
\begin{enumerate}[label=\textup{(\roman*)}]
\item the sharp fixed-power estimate gives
\[
\sum_{N<n\le2N}\left|\LL_{\le Y_0}(n)-\overline{\LL}_{\le Y_0,N}\right|^2=O(N);
\]
\item the transition band
\[
T_{\mathrm{tr}}(n)=\sum_{Y_0<d\le \sqrt{2N}}\frac{I_d(n)}d
\]
satisfies
\[
\sum_{N<n\le2N}\left|T_{\mathrm{tr}}(n)-\overline{T}_{\mathrm{tr},N}\right|^2=O(N);
\]
\item the shifted-prime tail beginning at \(\sqrt{2N}\) satisfies Hypothesis \ref{hyp:shifted-prime-tail} with \(Z_N=\sqrt{2N}\), equivalently
\[
\sum_{N<n\le2N}\left|\sum_{\substack{n/2<p<n-\sqrt{2N}\\ p\ \mathrm{prime}}}\frac1{n-p}-\overline{P}_N\right|^2=O(N).
\]
\end{enumerate}
Then the full variance bound \eqref{eq:full-L-variance-target} holds.
\end{proposition}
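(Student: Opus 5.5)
The plan is to decompose $\LL(n)$ on $(N,2N]$ into the three ranges matching hypotheses (i)--(iii) and to combine them by the elementary inequality $|a+b+c|^2\le 3(|a|^2+|b|^2+|c|^2)$, applied after subtracting dyadic means. For $N<n\le 2N$, split
\[
\LL(n)=\LL_{\le Y_0}(n)+T_{\mathrm{tr}}(n)+\sum_{\sqrt{2N}<d<n}\frac{I_d(n)}d .
\]
The first two pieces are exactly the sums in (i) and (ii). Since $Y_0=N^{1/2-\varepsilon}<\sqrt{2N}$ and all three ranges are disjoint and exhaust $1\le d<n$, the split is exact. For the third piece I would invoke Proposition~\ref{prop:shifted-prime-tail}, whose identity \eqref{eq:prime-tail-identity} holds for all sufficiently large $N$. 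It rewrites the tail as $\frac1{n-1}+P(n)$, where $P(n)=\sum_{n/2<p<n-\sqrt{2N}}1/(n-p)$.

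Next I center each piece at its own dyadic mean. Averaging is linear, so $\overline{\LL}_N$ equals the sum of the four dyadic means of $\LL_{\le Y_0}$, $T_{\mathrm{tr}}$, $1/(n-1)$ and $P$. Hence
\[
\LL(n)-\overline\LL_N=\bigl(\LL_{\le Y_0}-\overline{\LL}_{\le Y_0,N}\bigr)+\bigl(T_{\mathrm{tr}}-\overline T_{\mathrm{tr},N}\bigr)+\Bigl(\tfrac1{n-1}-\overline{(\tfrac1{n-1})}_N\Bigr)+\bigl(P(n)-\overline P_N\bigr).
\]
The third bracket is $O(1/N)$ pointwise, so its square sums to $O(1/N)$. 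The fourth bracket is precisely the centered quantity in (iii), which is the same as $\mathcal P_{Z_N}$ in Hypothesis~\ref{hyp:shifted-prime-tail} with $Z_N=\sqrt{2N}$. One point needs checking here: the mean $\overline P_N$ in (iii) is to be read as the dyadic mean of $P$ itself, matching the definition in Hypothesis~\ref{hyp:shifted-prime-tail}.

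Finally, the four-term Cauchy--Schwarz inequality $|\sum_{i=1}^4 a_i|^2\le 4\sum_{i=1}^4|a_i|^2$ bounds $\sum_{N<n\le2N}|\LL(n)-\overline\LL_N|^2$ by four times the sum of the four squared-deviation sums. These are $O(N)$ by (i), $O(N)$ by (ii), $O(1/N)$ for the $1/(n-1)$ term, and $O(N)$ by (iii). This gives \eqref{eq:full-L-variance-target}.

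There is no real analytic obstacle, because the content is all in the hypotheses. The only points requiring care are bookkeeping ones. First, the boundary $d=\sqrt{2N}$ must be assigned to $T_{\mathrm{tr}}$ rather than to the prime tail, which is consistent since Proposition~\ref{prop:shifted-prime-tail} uses the strict inequality $d>\sqrt{2N}$. Second, the $d=1$ term and the constraint $d<n$ sit inside $\LL_{\le Y_0}$, which is harmless. Third, the identity of Proposition~\ref{prop:shifted-prime-tail} holds only for large $N$, but the finitely many small $N$ are absorbed into the implied constant.
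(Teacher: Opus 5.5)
Your proposal is correct and follows essentially the same route as the paper. Both use the same three-range decomposition. Both apply Proposition~\ref{prop:shifted-prime-tail} to rewrite the $d>\sqrt{2N}$ range as the shifted-prime sum plus the deterministic term $1/(n-1)$. Both then combine the centered pieces by Cauchy--Schwarz. Treating $1/(n-1)$ as a separate fourth term, with the sharper $O(1/N)$ bound, is simply a more explicit version of the paper's remark that this term has $O(1)$ dyadic variance.
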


\begin{proof}
By Proposition \ref{prop:shifted-prime-tail}, the range \(d>\sqrt{2N}\) differs from the shifted-prime tail in (iii) only by the deterministic term \(1/(n-1)\), whose dyadic variance is \(O(1)\).  Decompose
\[
\LL(n)=\LL_{\le Y_0}(n)+T_{\mathrm{tr}}(n)+T_{\mathrm{prime}}(n).
\]
After subtracting dyadic means, the square of the sum is at most three times the sum of the three squares.  The first two variances are \(O(N)\) by (i) and (ii), and the third is \(O(N)\) by (iii).  This proves \eqref{eq:full-L-variance-target}.
\end{proof}

The three pieces in this criterion should not be conflated.  Hypothesis \ref{hyp:fixed-power-buchstab} concerns the fixed-power rough range \(d\le N^\theta\) with \(\theta<1/2\).  It does not supply the transition-band estimate in (ii), and it is separate from the shifted-prime input in (iii).

\begin{question}[Fixed-power truncations below the shifted-prime range]\label{q:fixed-power-truncations}
Can Hypothesis \ref{hyp:fixed-power-buchstab} be proved unconditionally for every fixed $0<\theta<1/2$?  Equivalently, can one prove a finite-$u$ two-dimensional Buchstab asymptotic for the two affine-linear forms $m,m+h$, uniformly for split levels $d_1<d_2\le N^\theta$, with errors summable against $1/(d_1d_2)$?

This is the natural unconditional intermediate target between Theorem \ref{thm:truncated-second-moment} and the prime tail.  The one-dimensional finite-$u$ analogue is Proposition \ref{prop:one-dim-finite-u}.  It is not obtained by merely replacing $N^{\beta_N}$ by $N^\theta$ in Lemma \ref{lem:two-dim-fundamental}.  When $d\asymp N^\theta$, the Buchstab parameter $u=\log N/\log d$ is finite, so even the one-dimensional main term is governed by $\omega(u)$ rather than only by the local Mertens product $V(d)$.  For the two-point problem one needs the corresponding finite-$u$ two-dimensional Buchstab kernel and the same singular-series cancellation mechanism.
\end{question}

\begin{question}[Truncated Erd\H{o}s--Kac problem]\label{q:truncated-clt}
For $Y=N^{\beta_N}$ in the range of Theorem \ref{thm:truncated-second-moment}, determine the limiting distribution, if any, of the centered truncation
\[
\LL_Y(n)-\sum_{d\le Y}\frac{V(d)}d,
\qquad N<n\le2N.
\]
A three-point singular-series computation, modeled on Remark \ref{rem:parity-singular-series}, is the first nontrivial test.  It is not by itself a central limit theorem: a Gaussian law would require control of all fixed moments, or equivalently of the higher cumulants, together with the correct variance scale.  Thus the three-point calculation is best viewed as the first step toward an Erd\H{o}s--Kac type theorem for the truncation rather than as a consequence of the present second-moment argument.
\end{question}

\section{A pointwise constant and the linear-sieve barrier}\label{sec:pointwise}

We next prove an explicit pointwise bound.  This section replaces the anonymous estimate $\M(n)\ll\log\log n$ by the constant delivered by the standard upper-bound sieve.

\begin{proposition}[Pointwise bound with explicit constant]\label{prop:pointwise-two}
For every $\eps>0$ there is a constant $C_\eps$ such that, for every $n\ge3$,
\begin{equation}\label{eq:pointwise-two}
\M(n)\le (2+\eps)\log\log n+C_\eps.
\end{equation}
Equivalently,
\[
\M(n)\le (2+\eps)\sum_{p<n}\frac1p+O_\eps(1).
\]
\end{proposition}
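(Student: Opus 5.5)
We fix $\alpha\in(1/3,1/2)$, say $\alpha=2/5$, and let $A\subset[1,n)$ be pairwise coprime.  We write each element as $a=n-r$ with $1\le r<n$.  The plan is to split the elements of $A$ according to whether $a$ has a prime factor $p\le r^\alpha$.  The first class is paid for by a convergent prime sum, using disjointness of the prime divisor sets exactly as in Lemma \ref{lem:bounded-dual}.  The second class is counted dyadically in $r$ by the linear sieve of Lemma \ref{lem:linear-sieve}.

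\emph{Small-prime part.}  Suppose $a$ has a prime divisor $p\le r^\alpha$.  Then $r\ge p^{1/\alpha}$, so
\[
\frac1r\le p^{-1/\alpha}\le\sum_{q\mid a}q^{-1/\alpha}.
\]
The elements of $A$ have pairwise disjoint sets of prime divisors.  Hence the total contribution of these elements is at most $\sum_q q^{-1/\alpha}<\infty$, because $1/\alpha>1$.  This uses only $\alpha<1$, as the paper notes.  The element $a=1$ and the finitely many $r$ below a fixed threshold $D_0(\alpha,\eps)$ contribute $O_\eps(1)$.

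\emph{Large-prime part.}  Now suppose $\Pmin(a)>r^\alpha$ with $a>1$.  For $D\ge D_0$ and $r\in(D,2D]$, this gives $\Pmin(n-r)>D^\alpha$, so $p\nmid n-r$ for every prime $p\le D^\alpha$.  Lemma \ref{lem:linear-sieve}, applied with $I=(D,2D]\cap[1,n)$ and $b=n$, bounds the number of such $r$ by $(2+\eps)D/\log D$.  We restrict to $D\le n/2$ so that $|I|\asymp D$.  The top block $D\in(n/2,n)$ is either handled by a truncated interval of length $\gg D$ or covered directly by Lemma \ref{lem:crude-interval-sieve}; a single block costs only $O(1/\log n)\cdot O(1)$ anyway.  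Each $r$ in the block contributes at most $1/D$, so the block contributes at most $(2+\eps)/\log D$.  Summing over $D=2^k$ with $D_0\le 2^k<n$ gives
\[
\sum_{k\le\log_2 n}\frac{2+\eps}{k\log 2}=(2+\eps)\log\log n+O_\eps(1).
\]
This is the required bound.  The loss in $1/r$ versus $1/D$ inside a dyadic block is at most a factor $2$, so blocks must be used more carefully.  We can either use finer blocks $(D,(1+\delta)D]$, whose length is still $\asymp D$, or apply partial summation over $r$ with the sieve count for initial segments $(D,t]$.  Either way we obtain $\sum_r 1/r\le(2+\eps')\int dt/(t\log t)$, and hence the constant $2+\eps$.

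\emph{Main obstacle.}  The step needing care is the uniformity in Lemma \ref{lem:linear-sieve}.  We apply it to intervals $(D,(1+\delta)D]$ whose length is a fixed proportion of $D$, with the threshold depending on $\delta$, $\alpha$ and $\eps$ only, and with all residues $b=n$.  The lemma provides exactly this uniformity.  We also adjust $\eps$ to absorb the discretization, choosing $\delta$ so that $(1+\delta)(2+\eps/2)\le 2+\eps$.  The equivalent form of the statement then follows from Mertens' estimate \eqref{eq:mertens-prime}.
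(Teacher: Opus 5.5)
Your argument is correct and is essentially the paper's proof: you use the same split at $\Pmin(a)\le(n-a)^\alpha$, paid for by $\sum_p p^{-1/\alpha}$, and the same application of Lemma~\ref{lem:linear-sieve} on geometric blocks $(D,(1+\delta)D]$ with $\delta/\log(1+\delta)\to1$, where your $\delta$ is the paper's $\lambda$. One slip: your dyadic display is miscomputed, since $\sum_{k\le\log_2 n}\frac{2+\eps}{k\log 2}=\frac{2+\eps}{\log 2}\log\log n+O(1)$, so the dyadic version does not give the stated bound, and the fine-block argument is required rather than an optional refinement.
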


\begin{proof}
Fix once and for all some $\alpha\in(1/3,1/2)$.  The leading constant obtained below is independent of this choice; $\alpha$ only separates an $O_\alpha(1)$ very-small-prime part from the part controlled by the linear sieve.  Let $A\subset[1,n)$ be pairwise coprime and write $d=n-a$.

First consider selected elements $a>1$ with $\Pmin(a)\le d^\alpha$.  Assign each such $a$ to $p=\Pmin(a)$.  These primes are distinct.  Since $p\le d^\alpha$, we have $d\ge p^{1/\alpha}$ and hence
\[
\sum_{\substack{a\in A\\ \Pmin(a)\le d(a)^\alpha}}\frac1{d(a)}
\le \sum_p p^{-1/\alpha}=O_\alpha(1).
\]
Here convergence only requires $1/\alpha>1$, i.e. $\alpha<1$; the stronger restriction $\alpha<1/2$ is used to keep the linear-sieve parameter $s=1/\alpha+o(1)$ above the edge $2$, while $\alpha>1/3$ keeps $s<3$ so that the formula $F(s)=2e^\gamma/s$ applies.

It remains to count selected elements with $\Pmin(a)>d^\alpha$.  This class is a subset of all $d$ with $\Pmin(n-d)>d^\alpha$.  Fix a small parameter $0<\lambda\le1$ and put $D_j=(1+\lambda)^j$.  The intervals
\[
I_j=(D_j,\min\{(1+\lambda)D_j,n\,\}]
\]
cover $1\le d<n$.  If the last non-empty interval has length $<\lambda D_j/2$, combine it with the preceding interval; this changes only the interval-length constants depending on $\lambda$, and it creates at most $O_\lambda(1)$ additional endpoint contribution.  Thus every interval to which the sieve is applied has length $\asymp_\lambda D_j$ and is contained in $[D_j,2D_j]$.  The parameter $\lambda$ is fixed before Lemma \ref{lem:linear-sieve} is applied, so the threshold in that lemma is allowed to depend on $\lambda$ through these interval-length constants.  On such an interval, $d^\alpha>D_j^\alpha$, so the condition implies that $n-d$ avoids the residue class $n\pmod p$ for every prime $p\le D_j^\alpha$.  Lemma \ref{lem:linear-sieve} gives, uniformly in $n$,
\[
\#\{d\in I_j:\Pmin(n-d)>d^\alpha\}
\le (2+\eps/4)\frac{|I_j|}{\log D_j}
\le (2+\eps/4)\frac{\lambda D_j}{\log D_j}
\]
for all sufficiently large $D_j$, after increasing the implicit threshold in terms of $\lambda$.  Therefore the contribution of this interval is at most
\[
(2+\eps/4)\frac{\lambda}{\log D}.
\]
The finitely many intervals below the threshold contribute $O_{\alpha,\eps,\lambda}(1)$.  If $D=(1+\lambda)^j$, then
\[
\sum_{D<n}\frac{\lambda}{\log D}
=\frac{\lambda}{\log(1+\lambda)}\log\log n+O_\lambda(1).
\]
Choose $\lambda$ so small that
\[
(2+\eps/4)\frac{\lambda}{\log(1+\lambda)}\le 2+\eps.
\]
Combining the small-prime part, the sifted part, and the possible element $a=1$ proves \eqref{eq:pointwise-two}.
\end{proof}

\begin{remark}[Effectivizability of the pointwise constant]\label{rem:effective-pointwise-constant}
The constant $C_\eps$ in Proposition \ref{prop:pointwise-two} is effectivizable in principle once one fixes explicit effective versions of the standard sieve and Mertens inputs.  For example, one may fix $\alpha=2/5$ and choose $0<\lambda<1$ so that
\[
(2+\eps/4)\frac{\lambda}{\log(1+\lambda)}\le 2+\eps.
\]
Let $D_0=D_0(\eps,\lambda)$ be the corresponding threshold from Lemma \ref{lem:linear-sieve}, with $\alpha=2/5$ and error parameter $\eps/4$.  Then the proof gives a valid shape
\[
C_\eps=1+\sum_p p^{-5/2}+\sum_{d\le D_0}\frac1d
 +(2+\eps/4)\frac{\lambda}{\log(1+\lambda)}
 \left(2+\left|\log\log(1+\lambda)\right|+\log\log(D_0+3)\right),
\]
after increasing $D_0$ to absorb the finitely many small intervals.  We do not optimize this expression; its purpose is only to record the dependence once explicit versions of Mertens' theorem and the Rosser--Iwaniec linear sieve are fixed.
\end{remark}

\begin{corollary}[Two-sided order on a positive-density set]\label{cor:positive-density-two-sided}
Fix $\eps>0$ and $0<c<e^{-\gamma}$.  There is a constant $\delta=\delta(c,\eps)>0$ such that, for all sufficiently large $N$, at least $\delta N$ integers $n\le N$ satisfy
\[
 c\log\log n\le \M(n)\le (2+\eps)\log\log n+C_\eps.
\]
In particular, $\M(n)\asymp\log\log n$ on a positive-density set of integers, with explicit constants.
\end{corollary}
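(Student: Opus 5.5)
The upper bound $\M(n)\le(2+\eps)\log\log n+C_\eps$ holds for every $n\ge3$ by Proposition~\ref{prop:pointwise-two}, so only the lower bound $\M(n)\ge c\log\log n$ on a positive-density set needs an argument. I will get it from the self-rough lower bound $\LL(n)\le\M(n)$ of Lemma~\ref{lem:self-rough}. There are two natural routes. The first is the almost-all normal order: by Theorem~\ref{thm:main-aa}, for $\eta=e^{-\gamma}-c>0$ the set of $n\le N$ with $\M(n)<c\log\log n$ has size $o(N)$. Then all but $o(N)$ integers satisfy both inequalities, and any $\delta<1$ works for large $N$.

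Since the statement only asks for positive density with some $\delta(c,\eps)$, I will also record the softer first- and second-moment argument, which uses only Theorem~\ref{thm:main}-level input and the pointwise bound \eqref{eq:L-pointwise-O}. Work on $\sqrt N<n\le N$, where $\log\log n\ge\log\log N-\log 2$. Proposition~\ref{prop:lower} gives $\sum_{n\le N}\LL(n)=e^{-\gamma}N\log\log N+O(N)$. The pointwise bound $\LL(n)\le K\log\log n\le K\log\log N$ from Lemma~\ref{lem:crude-interval-sieve} controls the contribution of any set. Let $E$ be the set of $n\le N$ with $\LL(n)<c'\log\log N$, where $c<c'<e^{-\gamma}$. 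Then
\[
e^{-\gamma}N\log\log N-O(N)\le c'N\log\log N+K\log\log N\cdot\#E^{c},
\]
so $\#E^c\ge\bigl((e^{-\gamma}-c')/K\bigr)N-O(N/\log\log N)$. This is a Paley--Zygmund type count. On $E^c$ one has $\M(n)\ge\LL(n)\ge c'\log\log N\ge c\log\log n$, and this gives $\delta=(e^{-\gamma}-c')/(2K)$.

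The only point needing care is bookkeeping: converting $\log\log N$ to $\log\log n$ and discarding the $O(\sqrt N)$ integers $n\le\sqrt N$. Neither step is a real obstacle. The constant $C_\eps$ and the threshold ``$N$ sufficiently large'' absorb small $n$. I will present the first route as the main proof, since Theorem~\ref{thm:main-aa} is already established, and remark that the moment argument gives the result independently of the beta-sieve input.
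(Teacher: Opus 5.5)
Your proposal is correct, but your primary route is not the paper's. The paper's proof is essentially your second, ``softer'' argument. It uses $\LL(n)\le\M(n)$, the first moment of $\LL$ from Proposition~\ref{prop:lower}, and a pointwise ceiling on $[\sqrt N,N]$, and then runs a Paley--Zygmund type count.

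The one difference in that second argument is the source of the ceiling. The paper takes it from Proposition~\ref{prop:pointwise-two}, namely $\LL(n)\le\M(n)\le B\log\log N$ with $B=2+2\eps$, rather than from \eqref{eq:L-pointwise-O}. This yields the explicit value $\delta=(e^{-\gamma}-c)/(2(B-c))$. Your $K$, by contrast, is an unspecified absolute constant coming from the crude interval sieve.

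Your main route through Theorem~\ref{thm:main-aa} is also valid and not circular. Take $\eta=e^{-\gamma}-c$. Every non-exceptional $n$ then has $\M(n)\ge c\log\log n$, and the exceptional set has size $O(N\log\log\log N/\log\log N)=o(N)$. Combined with Proposition~\ref{prop:pointwise-two}, this gives density one rather than mere positive density.

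The cost of that route is that it imports all of Section~\ref{sec:distribution}: the beta-sieve fundamental lemma, the two-dimensional fundamental lemma and the truncated second moment. The paper's argument, which is also your fallback, needs only the Buchstab--de Bruijn first moment and a pointwise sieve bound. That is presumably why the corollary claims only ``positive density'': it is what first-moment information alone delivers, independently of the beta sieve, and with explicit constants.

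A minor bookkeeping point: in your second route you do not need to discard $n\le\sqrt N$. You threshold at $c'\log\log N$, which is already at least $c\log\log n$ for every $3\le n\le N$.
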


\begin{proof}
The upper bound is Proposition \ref{prop:pointwise-two}.  Since $\LL(n)\le \M(n)$ and
\[
\sum_{n\le N}\LL(n)=e^{-\gamma}N\log\log N+O(N)
\]
by Proposition \ref{prop:lower} and Lemma \ref{lem:buchstab-summation}, the lower bound holds on a positive proportion of integers.  Quantitatively, fix $B=2+2\eps$.  Applying Proposition \ref{prop:pointwise-two} with the original $\eps$ and then taking $N$ large enough gives
\[
\M(n)\le B\log\log N
\]
for all $\sqrt N<n\le N$.  If fewer than
\[
\delta N,\qquad \delta=\frac{e^{-\gamma}-c}{2(B-c)},
\]
integers in $[\sqrt N,N]$ had $\LL(n)\ge c\log\log N$, then, using $\LL(n)\le \M(n)$ and the pointwise upper bound, the contribution of $\LL$ over $[\sqrt N,N]$ would be at most
\[
\bigl(c+\delta(B-c)+o(1)\bigr)N\log\log N
<e^{-\gamma}N\log\log N,
\]
contradicting the first moment of $\LL$ after subtracting the negligible initial segment.  Replacing $\log\log N$ by $\log\log n$ on $[\sqrt N,N]$ changes only the constants.
\end{proof}

\begin{remark}[The barrier at $2$]\label{rem:barrier-two}
The constant $2$ in Proposition \ref{prop:pointwise-two} is not an artefact of dyadic summation.  It is the upper linear-sieve factor at the edge $s=2$: in Lemma \ref{lem:linear-sieve}, with $z=D^\alpha$ and $\alpha\uparrow1/2$, the linear-sieve parameter tends to $s=2$ and the main term is $2D/\log D$.  This method therefore cannot reach the conjectural pointwise constant $1$.  The first-hit and divisor-budget certificates in Sections \ref{sec:first-hit} and \ref{sec:replacement} do not by themselves lower this constant: when converted into uniform pointwise estimates they again require the same local window-packing input.  To replace the local factor $2$ by a prime-harmonic factor $1$ one needs information comparable to sharp short-interval prime counting or to the window-packing hypothesis in Section \ref{sec:conditional}.  In particular, a direct bound
\[
|A\cap[n-x,n)|\le \pi(x)+O(x/(\log x)^2)
\]
for all admissible $A,n,x$ would imply the weakened second Hardy-Littlewood estimate discussed in Problem \#855; see Proposition \ref{prop:HL-barrier} below.
\end{remark}

\section{Window reduction and a conditional pointwise theorem}\label{sec:conditional}

This section is conditional and contextual.  It makes explicit the local estimate that would imply Erd\H{o}s's pointwise bound; none of the main unconditional theorems relies on the hypothesis below.  For a pairwise coprime set $A\subset[1,n)$ define
\begin{equation}\label{eq:B-def}
B_{A,n}(x)=|A\cap[n-x,n)|=\#\{a\in A:n-x\le a<n\}.
\end{equation}
Equivalently, $B_{A,n}(x)$ counts selected distances $d=n-a$ with $d\le x$.

\begin{lemma}[Partial summation reduction]\label{lem:partial-summation}
Let $N=n-1$.  For every pairwise coprime $A\subset[1,n)$,
\begin{align}\label{eq:partial-summation-defect}
\sum_{a\in A}\frac1{n-a}-\sum_{p<n}\frac1p
&=\frac{B_{A,n}(N)-\pi(N)}{N}
+\sum_{m=1}^{N-1}\frac{B_{A,n}(m)-\pi(m)}{m(m+1)}.
\end{align}
The boundary term is $O(1)$ uniformly in $A,n$.
\end{lemma}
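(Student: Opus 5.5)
The identity is discrete Abel summation in the distance variable $d=n-a$. I would write the selected distances as a counting function and the primes as another, and then sum both against the weight $1/m$.

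\emph{Setup.} Put $N=n-1$. Let $\beta(m)=\one_{\{n-m\in A\}}$ for $1\le m\le N$, so that $B_{A,n}(x)=\sum_{m\le x}\beta(m)$, since $n-x\le a<n$ means $1\le d\le x$. Then
\[
\sum_{a\in A}\frac1{n-a}=\sum_{m=1}^N\frac{\beta(m)}m .
\]
Likewise $\sum_{p<n}1/p=\sum_{m=1}^N \one_{\mathbb P}(m)/m$, because $p<n$ is the same as $p\le N$, and $\pi(m)$ is the counting function of $\one_{\mathbb P}$. Let $f(m)=\beta(m)-\one_{\mathbb P}(m)$ and $F(m)=B_{A,n}(m)-\pi(m)$, with $F(0)=0$.

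\emph{Abel summation.} Writing $f(m)=F(m)-F(m-1)$ gives
\[
\sum_{m=1}^N\frac{f(m)}m=\frac{F(N)}N+\sum_{m=1}^{N-1}F(m)\left(\frac1m-\frac1{m+1}\right),
\]
and $\frac1m-\frac1{m+1}=\frac1{m(m+1)}$. This is exactly \eqref{eq:partial-summation-defect}. Pairwise coprimality is not used for the identity itself, which holds for any $A\subset[1,n)$.

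\emph{Boundary term.} Here pairwise coprimality enters. We have $|F(N)|\le \max(B_{A,n}(N),\pi(N))$. Trivially $\pi(N)\le N$. Any pairwise coprime set in $[1,n)$ has at most one element equal to $1$ and otherwise distinct least prime factors. Hence $B_{A,n}(N)=|A|\le 1+\pi(N)\le N+1$. So $|F(N)|/N\le 2$, and the boundary term is $O(1)$ uniformly in $A$ and $n$ (for $n\ge2$, i.e.\ $N\ge1$).

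\emph{Main obstacle.} There is essentially none. The only care needed is to index the sums so that no separate $m=N$ term is left over from the telescoping, and to note that the trivial bound $|A|\le N$ would already suffice for the boundary estimate.
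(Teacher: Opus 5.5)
Your proposal is correct and is the paper's proof: discrete Abel summation applied to the selected distances and to the primes, then subtracted, with the boundary term bounded via $|A|\le\pi(N)+1$ from distinct prime divisors. Your side remark is also right: the trivial bound $|A|\le N$ already makes the boundary term $O(1)$, so pairwise coprimality is not actually needed anywhere in this lemma.
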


\begin{proof}
The discrete partial summation identity gives
\[
\sum_{a\in A}\frac1{n-a}=\frac{B_{A,n}(N)}N+
\sum_{m=1}^{N-1}B_{A,n}(m)\left(\frac1m-\frac1{m+1}\right).
\]
The same identity applied to the set of primes $p<n$ gives the corresponding formula with $\pi(m)$ in place of $B_{A,n}(m)$.  Subtracting gives \eqref{eq:partial-summation-defect}.  Finally, every $a>1$ has at least one prime divisor, and pairwise coprimality makes these prime divisors distinct, so $|A|\le\pi(N)+1$; hence the boundary term is $O(1)$.
\end{proof}

\begin{hypothesis}[Conditional Hardy--Littlewood window-packing hypothesis]\label{hyp:window}
There are constants $C>0$ and $\delta>0$ such that, for all $n$, all pairwise coprime $A\subset[1,n)$, and all $3\le x<n$,
\begin{equation}\label{eq:window-hypothesis}
B_{A,n}(x)\le \pi(x)+C\frac{x}{(\log x)^{1+\delta}}.
\end{equation}
\end{hypothesis}

\begin{theorem}[Conditional pointwise Erd\H{o}s bound]\label{thm:conditional}
If Hypothesis \ref{hyp:window} holds, then
\begin{equation}\label{eq:conditional-erdos}
\M(n)\le \sum_{p<n}\frac1p+O(1)
\end{equation}
uniformly in $n$.
\end{theorem}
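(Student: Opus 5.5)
The plan is to start from the partial summation identity of Lemma \ref{lem:partial-summation}. Fix $n$ and a pairwise coprime $A\subset[1,n)$, and put $N=n-1$. Lemma \ref{lem:partial-summation} writes the defect $\sum_{a\in A}1/(n-a)-\sum_{p<n}1/p$ as a boundary term plus a weighted sum of window defects. The boundary term $(B_{A,n}(N)-\pi(N))/N$ is already $O(1)$ by that lemma. It therefore suffices to bound
\[
\sum_{m=1}^{N-1}\frac{B_{A,n}(m)-\pi(m)}{m(m+1)}
\]
from above by an absolute constant, uniformly in $A$ and $n$. Only an upper bound is needed, which is why a one-sided hypothesis is enough.

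I split the sum at $m=3$. For $m\le 2$, I use the trivial bound $B_{A,n}(m)\le m$, so these terms contribute $O(1)$. For $3\le m\le N-1$ we have $3\le m<n$, so Hypothesis \ref{hyp:window} applies with $x=m$ and gives
\[
B_{A,n}(m)-\pi(m)\le C\frac{m}{(\log m)^{1+\delta}}.
\]
The weights $1/(m(m+1))$ are positive, so each term is at most $C/((m+1)(\log m)^{1+\delta})$. The series $\sum_{m\ge3}1/(m(\log m)^{1+\delta})$ converges because $\delta>0$, for instance by integral comparison with $\int dt/(t(\log t)^{1+\delta})$. Hence the whole weighted sum is at most $O_{C,\delta}(1)$, independently of $n$ and $A$.

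Combining the two ranges with the boundary bound gives $\sum_{a\in A}1/(n-a)\le\sum_{p<n}1/p+O(1)$ for every admissible $A$. Taking the supremum over $A$ then gives \eqref{eq:conditional-erdos}; small $n$ are absorbed into the constant. I do not expect a real obstacle. The only points needing care are the positivity of the partial-summation weights, which lets the one-sided hypothesis be used, and the edge cases: the range $x\ge3$ in the hypothesis, and the fact that $B_{A,n}(m)$ counts distances $d\le m$, which is consistent with the window $[n-x,n)$ in \eqref{eq:B-def}.
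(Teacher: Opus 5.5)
Your proposal is correct and follows the paper's argument: apply the partial-summation identity of Lemma~\ref{lem:partial-summation}, use the positivity of the weights $1/(m(m+1))$ together with Hypothesis~\ref{hyp:window} to bound the window defects for $3\le m<n$, and conclude from the convergence of $\sum_{m\ge3}1/(m(\log m)^{1+\delta})$ before taking the supremum over $A$. Your explicit treatment of the terms $m\le2$ and of the boundary term fills in details that the paper's short proof absorbs into its $O(1)$.
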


\begin{proof}
By Lemma \ref{lem:partial-summation}, the defect is at most
\[
O(1)+C\sum_{3\le m<n}\frac{1}{m(\log m)^{1+\delta}}.
\]
The sum converges uniformly in $n$.  Taking the supremum over $A$ proves \eqref{eq:conditional-erdos}.
\end{proof}

The hypothesis can be formulated in an $A$-independent way using the exact local packing problem.  For $x<n$, let $R_n(x)$ be the number of $x$-rough elements in the window,
\[
R_n(x)=\#\{m\in[n-x,n):\Pmin(m)>x\}.
\]
For each $m\in[n-x,n)$ with a prime divisor at most $x$, let
\[
P_x(m)=\{p\le x:p\mid m\}.
\]
Let $\nu_n(x)$ be the matching number of the hypergraph on the vertex set $\{p\le x:p\in\mathbb P\}$ whose nonempty edges are the sets $P_x(m)$.

\begin{lemma}[Exact local packing bound]\label{lem:local-packing}
For every admissible $A\subset[1,n)$,
\begin{equation}\label{eq:local-packing-bound}
B_{A,n}(x)\le R_n(x)+\nu_n(x).
\end{equation}
Moreover $R_n(x)+\nu_n(x)$ is the exact maximum possible size of a pairwise coprime subset of the window $[n-x,n)$.
\end{lemma}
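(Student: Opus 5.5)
The proof splits the elements of the window $A\cap[n-x,n)$ into two types, bounds each type separately to get \eqref{eq:local-packing-bound}, and then builds a coprime set of exactly that size to show the bound is attained.

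\emph{Upper bound.} Fix an admissible $A$ and put $W=A\cap[n-x,n)$. Split $W=W_1\sqcup W_2$, where $W_1$ consists of the $x$-rough elements (those with $\Pmin(m)>x$, including $m=1$ by the convention $\Pmin(1)=\infty$), and $W_2$ consists of the elements having some prime divisor $p\le x$. Then $W_1$ is a subset of the set counted by $R_n(x)$, so $|W_1|\le R_n(x)$. For $m\in W_2$, the edge $P_x(m)$ is nonempty. For distinct $m,m'\in W_2$, pairwise coprimality forces $P_x(m)\cap P_x(m')=\emptyset$. The edges are also distinct as sets, since they are nonempty and disjoint. Hence $\{P_x(m):m\in W_2\}$ is a matching in the hypergraph, and $|W_2|\le\nu_n(x)$. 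Adding the two bounds gives \eqref{eq:local-packing-bound}.

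\emph{Sharpness.} Take a maximum matching $P_x(m_1),\dots,P_x(m_k)$ with $k=\nu_n(x)$, choosing one integer $m_i$ in the window realizing each edge. Let $S=\{m_1,\dots,m_k\}\cup\{m\in[n-x,n):\Pmin(m)>x\}$; we claim $S$ is pairwise coprime. The proof uses the window length: two elements $m\neq m'$ of $[n-x,n)$ satisfy $0<|m-m'|<x$, so any common prime divisor $q$ divides $m-m'$ and therefore $q\le x$.
\begin{enumerate}[label=\textup{(\roman*)}]
\item If $m,m'$ are both rough, no prime $q\le x$ divides $m$, so they are coprime.
\item If one element is rough and the other is some $m_i$, the same argument applies.
\item If they are $m_i,m_j$ with $i\ne j$, a common $q\le x$ would lie in $P_x(m_i)\cap P_x(m_j)=\emptyset$, which is impossible.
\end{enumerate}
The sets of rough elements and of the $m_i$ are disjoint, because each $m_i$ has a prime divisor $\le x$ while rough elements have none. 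Also, the $m_i$ are distinct because their edges are distinct. So $|S|=R_n(x)+\nu_n(x)$, and the maximum is attained.

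There is one point of care, and it is the only potential obstacle. If the maximum in the lemma means the maximum of $B_{A,n}(x)$ over admissible $A\subset[1,n)$, then $S$ itself serves as such an $A$, because $S\subset[n-x,n)\subset[1,n)$ for $x<n$. The $m=1$ case needs no special treatment: $1$ is rough under the paper's convention and is coprime to everything.
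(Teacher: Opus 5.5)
Your proof is correct and matches the paper's argument: any common prime factor of two window elements is at most $x$, so the $x$-rough elements are compatible with everything, while the non-rough elements of a coprime set correspond to a matching of the edges $P_x(m)$. The only difference is that you write out the injectivity of $m\mapsto P_x(m)$ and the explicit set attaining $R_n(x)+\nu_n(x)$, both of which the paper leaves implicit.
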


\begin{proof}
If two distinct integers in $[n-x,n)$ share a prime divisor $q$, then $q$ divides their difference, whose absolute value is $<x$; hence $q\le x$.  Therefore the elements with no prime divisor at most $x$ are mutually coprime inside the window and are compatible with every choice of non-rough elements.  Among the remaining elements, pairwise coprimality is exactly the requirement that their sets $P_x(m)$ be pairwise disjoint, which is precisely a hypergraph matching condition.  This proves both the upper bound and exactness.
\end{proof}

Thus it is enough to prove
\[
R_n(x)+\nu_n(x)\le \pi(x)+O\left(\frac{x}{(\log x)^{1+\delta}}\right)
\]
uniformly in $n,x$.  This is a Hardy-Littlewood type window-packing assertion, substantially sharper than what the linear sieve alone gives.

\begin{proposition}[Prime-interval barrier]\label{prop:HL-barrier}
Suppose that, for some constant $C$, the estimate
\begin{equation}\label{eq:too-strong-window}
B_{A,n}(x)\le \pi(x)+C\frac{x}{(\log x)^2}
\end{equation}
holds for all $n$, all pairwise coprime $A\subset[1,n)$, and all $3\le x<n$.  Then, uniformly for $X\ge1$ and $Y\ge3$,
\begin{equation}\label{eq:HL-consequence}
\pi(X+Y)-\pi(X)\le \pi(Y)+O\left(\frac{Y}{(\log Y)^2}\right).
\end{equation}
\end{proposition}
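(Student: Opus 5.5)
We specialise the hypothesis \eqref{eq:too-strong-window} to a single admissible set, namely the primes in a short window just below $n$. Given $X\ge1$ and $Y\ge3$, put $n=X+Y+1$, so that $[1,n)=[1,X+Y]$. Take $A$ to be the set of primes $p$ with $X<p\le X+Y$. Distinct primes are coprime, so $A$ is pairwise coprime, and $A\subset[1,n)$.

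Next choose the window length $x$ so that $[n-x,n)$ contains exactly the integers in $(X,X+Y]$. Since $n-x=X+1$, this forces $x=Y$.

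\begin{itemize}
\item The condition $3\le x<n$ holds, because $Y\ge3$ and $n=X+Y+1>Y$.
\item Then $B_{A,n}(Y)=\#\{a\in A: X+1\le a\le X+Y\}=\pi(X+Y)-\pi(X)$.
\end{itemize}

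Applying \eqref{eq:too-strong-window} with this $A$, $n$ and $x=Y$ gives
\[
\pi(X+Y)-\pi(X)\le \pi(Y)+C\frac{Y}{(\log Y)^2},
\]
which is \eqref{eq:HL-consequence}. The implied constant is $C$ itself, so it is uniform in $X$ and $Y$.

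The only points needing care are bookkeeping:
\begin{itemize}
\item The half-open window $[n-x,n)$ must match $(X,X+Y]$ exactly. This is why we take $n=X+Y+1$ rather than $n=X+Y$.
\item If $X$ or $Y$ is real rather than an integer, replace them by $\lfloor X\rfloor$ and $\lfloor Y\rfloor$. The prime-counting functions are unchanged by this, and $Y/(\log Y)^2$ changes by at most a bounded factor. Alternatively, take $x=Y$ real: the window $[n-Y,n)$ with $n=\lfloor X+Y\rfloor+1$ contains exactly the integers in $(X,X+Y]$ up to an endpoint adjustment, which changes the count by at most $O(1)$.
\end{itemize}

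We expect no genuine obstacle here. The content of the proposition is the observation that a window of primes is itself an admissible pairwise coprime set.
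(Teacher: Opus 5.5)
Your proof is correct and is essentially the paper's argument: the same $n=X+Y+1$ and the same window of primes $X<p\le X+Y$. Your choice $x=Y$ gives the count exactly, whereas the paper takes $x=Y+1$ and absorbs an $O(1)$ endpoint term. One small slip in your real-variable remark: flooring $X$ and $Y$ does not leave $\pi(X+Y)$ unchanged. It can lower it by $1$, since $\lfloor X+Y\rfloor\le\lfloor X\rfloor+\lfloor Y\rfloor+1$, but that $O(1)$ loss is absorbed into $O(Y/(\log Y)^2)$ for $Y\ge3$.
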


\begin{proof}
Let $n=X+Y+1$ and let
\[
A=\{p\in\mathbb P:X<p<X+Y+1\}.
\]
Then $A$ is pairwise coprime and $B_{A,n}(Y+1)=\pi(X+Y)-\pi(X)+O(1)$, with harmless endpoint conventions.  Applying \eqref{eq:too-strong-window} with $x=Y+1$ gives \eqref{eq:HL-consequence}.
\end{proof}

This is the precise barrier behind Remark \ref{rem:barrier-two}.  Estimate \eqref{eq:HL-consequence} is a conjectural weakened form related to the second Hardy--Littlewood prime-interval inequality.  The classical prime-pair framework and later conditional obstructions are in \cite{HardyLittlewood1923,HensleyRichards1974,Richards1974}, while \cite{MontgomeryVaughan1973} gives the large-sieve upper-bound context; the Erd\H{o}s Problem \#855 page is cited only as a contextual index \cite{Bloom855}.

\section{First-hit certificates}\label{sec:first-hit}

For a prime $p<n$, define
\begin{equation}\label{eq:gp-def}
g_p(n)=\min\{1\le d<n:\Pmin(n-d)=p\}.
\end{equation}
This is finite, since $d=n-p$ is admissible.

\begin{proposition}[Least-prime-factor first-hit certificate]\label{prop:first-hit}
For every $n\ge2$,
\begin{equation}\label{eq:first-hit}
\M(n)\le 1+\sum_{p<n}\frac1{g_p(n)}.
\end{equation}
\end{proposition}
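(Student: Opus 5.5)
The plan is an injective charging argument, parallel to the disjointness step in Lemma \ref{lem:bounded-dual}. Let $A\subset[1,n)$ be pairwise coprime. The possible element $a=1$ contributes at most $1/(n-1)\le1$, which accounts for the leading $1$ in \eqref{eq:first-hit}. Each remaining element $a\in A$ with $a>1$ will be charged to its least prime factor $p=\Pmin(a)$, and I will bound its contribution by $1/g_p(n)$.

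First, $p$ is a prime with $p\le a<n$, so $p<n$ and $g_p(n)$ is defined. Write $d=n-a$. Then $1\le d<n$ and $\Pmin(n-d)=\Pmin(a)=p$. Thus $d$ belongs to the set whose minimum defines $g_p(n)$ in \eqref{eq:gp-def}. Hence $d\ge g_p(n)$, and therefore $1/(n-a)\le 1/g_p(n)$.

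Second, the map $a\mapsto\Pmin(a)$ is injective on $A\setminus\{1\}$. If two distinct elements had the same least prime factor $p$, then $p$ would divide both, contradicting pairwise coprimality. Summing the bound over $a\in A\setminus\{1\}$ therefore gives at most $\sum_{p<n}1/g_p(n)$, since the primes reached are distinct and all lie below $n$. Adding the contribution of $a=1$ and taking the supremum over $A$ yields \eqref{eq:first-hit}.

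No step is a genuine obstacle. The only points needing care are the edge cases. Integers $a>1$ always have a least prime factor, and $a=1$ is excluded from the charging because $\Pmin(1)=\infty$ by convention. The bound $1/(n-1)\le1$ holds since $n\ge2$.
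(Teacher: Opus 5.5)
Your proposal is correct and is essentially the paper's proof: the possible element $a=1$ is bounded by $1$, each $a>1$ is charged injectively to $\Pmin(a)$ by pairwise coprimality, and $d=n-a$ is admissible in the definition of $g_{\Pmin(a)}(n)$, so $1/(n-a)\le 1/g_{\Pmin(a)}(n)$. Your explicit check that $\Pmin(a)\le a<n$, so that the charged prime appears in the sum over $p<n$, is a detail the paper leaves implicit.
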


\begin{proof}
Let $A\subset[1,n)$ be pairwise coprime.  Throughout the main part of the proof we restrict to elements $a>1$; the possible element $a=1$ contributes only $1/(n-1)\le1$ and is absorbed at the end.  The element $a=1$, if present, contributes at most $1$.  For every $a>1$ in $A$, assign $p(a)=\Pmin(a)$.  This assignment is injective, because two different elements of $A$ cannot share a prime divisor.  If $a=n-d$, then $\Pmin(n-d)=p(a)$ and hence $g_{p(a)}(n)\le d$.  Thus
\[
\frac1{n-a}=\frac1d\le\frac1{g_{p(a)}(n)}.
\]
Summing over $a>1$ proves \eqref{eq:first-hit}.
\end{proof}

The certificate has a renewal interpretation.  Let
\[
K_p(N)=\#\{m\le N:\Pmin(m)=p\}
=\#\{u\le N/p:\Pmin(u)\ge p\}.
\]

\begin{proposition}[Renewal-cycle bound]\label{prop:renewal}
Let
\[
T_p(N)=\sum_{\substack{n\le N\\p<n}}\frac1{g_p(n)}.
\]
If $K_p(N)\ge1$, then
\begin{equation}\label{eq:renewal-bound}
T_p(N)\le (K_p(N)+1)\left(1+\log\frac{N}{K_p(N)+1}\right)+O(\log N).
\end{equation}
\end{proposition}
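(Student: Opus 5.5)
The plan is to exploit the renewal structure behind $g_p(n)$: between consecutive integers whose least prime factor is $p$, the quantity $g_p(n)$ just counts how far $n$ lies past the last such integer, so $T_p(N)$ splits into harmonic sums over disjoint cycles. Let $p=m_1<m_2<\dots<m_K\le N$ be the integers $m\le N$ with $\Pmin(m)=p$, so that $K=K_p(N)\ge1$; note that $m_1=p$.

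\emph{Identifying $g_p(n)$.} Fix $n$ with $p<n\le N$. The admissible $d$ in \eqref{eq:gp-def} are exactly the $d=n-m$ with $m<n$ and $\Pmin(m)=p$. The smallest such $d$ therefore comes from the largest $m_i<n$, so
\[
g_p(n)=n-m_{i(n)},\qquad i(n)=\max\{i:m_i<n\}.
\]
This maximum exists because $m_1=p<n$.

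\emph{Cycle decomposition.} Put $m_{K+1}=N$ if $m_K<N$. The $n$ in the range of $T_p(N)$ are partitioned into the blocks $(m_i,m_{i+1}]$, with the last block ending at $N$; blocks of length zero are discarded. On a block of length $L_i\ge1$ the contribution is
\[
\sum_{j=1}^{L_i}\frac1j\le 1+\log L_i .
\]
The number of nonempty blocks is at most $K$, since each starts at some $m_i<N$, and the lengths satisfy $\sum_i L_i\le N$.

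\emph{Concavity step.} Let $k\le K$ be the number of nonempty blocks. Jensen's inequality for $\log$ gives
\[
\sum_{i}(1+\log L_i)\le k\Bigl(1+\log\frac Nk\Bigr).
\]
The function $\phi(t)=t(1+\log(N/t))$ has derivative $\log(N/t)\ge0$ for $t\le N$, so it is nondecreasing on $[1,N]$. Hence, provided $K+1\le N$, we get $\phi(k)\le\phi(K+1)$, which is the main term of \eqref{eq:renewal-bound}.

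\emph{Edge cases.} The only bookkeeping to watch is the case $K+1>N$. There $K=N$, which forces $p=N=1$ and is impossible, or else $K\ge N$ in degenerate tiny cases. These are absorbed by the $O(\log N)$ term, using the trivial bound $T_p(N)\le\sum_{n\le N}1\cdot\frac1{1}$ restricted to harmonic sums, which gives $T_p(N)\ll\log N$ times the number of blocks. Endpoint conventions at $n=m_i$ itself, where $g_p(n)$ uses the previous hit, also fall into this term.

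I expect no genuine obstacle: the key point is recognizing the first-hit identity for $g_p(n)$, after which the argument is the concavity estimate above.
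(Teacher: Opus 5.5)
Your proposal is correct and follows essentially the same route as the paper. You split $(p,N]$ into the renewal blocks $(m_i,m_{i+1}]$ plus the terminal block ending at $N$, bound each block by $H_L\le 1+\log L$, and apply concavity of $\log$; your explicit monotonicity of $t\bigl(1+\log(N/t)\bigr)$ supplies the step from $k$ pieces to $K_p(N)+1$ pieces that the paper leaves implicit. Your edge-case paragraph is muddled, but the case never arises: every $m$ with $\Pmin(m)=p$ is a multiple of $p$, so $K_p(N)\le N/p\le N/2$ and $N\ge p\ge 2$, hence $K_p(N)+1\le N$ always holds. Your argument in fact gives the sharper bound $T_p(N)\le K_p(N)\bigl(1+\log(N/K_p(N))\bigr)$, with no $O(\log N)$ term.
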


\begin{proof}
List the integers $m\le N$ with $\Pmin(m)=p$ as $m_1<\cdots<m_K$.  The condition $p<n\le N$ restricts the $n$-sum, but dropping this restriction can only increase the following upper bound.  Consider first an interval between two consecutive occurrences.  If $m_j<n\le m_{j+1}$, then the closest earlier occurrence is $m_j$ unless $n=m_{j+1}$, in which case distance $0$ is not allowed and the previous occurrence gives distance $m_{j+1}-m_j$.  Thus the contribution from this interval is at most
\[
\sum_{r=1}^{m_{j+1}-m_j}\frac1r=H_{m_{j+1}-m_j}.
\]
The initial range before $m_1$ and the terminal range after $m_K$ contribute at most $O(\log N)$ after enlarging constants.  Hence $T_p(N)$ is bounded by a sum of harmonic numbers $\sum H_{L_j}+O(\log N)$, where the positive lengths $L_j$ have total at most $N$ and the number of complete or terminal pieces is at most $K+1$.  Since $H_L\le1+\log L$ and $x\mapsto 1+\log x$ is concave on $x\ge1$,
\[
\sum_j H_{L_j}\le (K+1)\left(1+\log\frac{N}{K+1}\right)+O(\log N),
\]
after discarding empty pieces.  This proves \eqref{eq:renewal-bound}.
\end{proof}

\section{Divisor-budget replacement}\label{sec:replacement}

For $a>1$ put
\begin{equation}\label{eq:rho-def}
\rho(a)=\sum_{p\mid a}\frac1p.
\end{equation}

\begin{proposition}[Divisor-budget inequality]\label{prop:divisor-budget}
For every pairwise coprime set $A\subset[1,n)$,
\begin{equation}\label{eq:divisor-budget}
\sum_{a\in A}\frac1{n-a}
\le 1+\sum_{p<n}\frac1p
+\sum_{\substack{a\in A\\a>1}}
\left(\frac1{n-a}-\rho(a)\right)_+.
\end{equation}
\end{proposition}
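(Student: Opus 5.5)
The plan is to use the trivial pointwise inequality $x\le y+(x-y)_+$ for each element, and then control the total of the $\rho(a)$ terms by disjointness of prime supports. First set aside the possible element $a=1$. It contributes $1/(n-1)\le1$, which accounts for the leading constant $1$ in \eqref{eq:divisor-budget}.

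For each $a\in A$ with $a>1$ I will write
\[
\frac1{n-a}\le \rho(a)+\Bigl(\frac1{n-a}-\rho(a)\Bigr)_+ .
\]
This holds for all real numbers, since $x\le y+(x-y)_+$ whether or not $x\ge y$. Summing over $a>1$ reduces \eqref{eq:divisor-budget} to proving
\[
\sum_{\substack{a\in A\\a>1}}\rho(a)\le\sum_{p<n}\frac1p .
\]

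To prove this, expand $\rho(a)=\sum_{p\mid a}1/p$ and look at the prime supports of the elements. Because $A$ is pairwise coprime, the sets $\{p:p\mid a\}$ for distinct $a\in A$ are pairwise disjoint. Every prime divisor of such an $a$ satisfies $p\le a<n$. The double sum $\sum_{a}\sum_{p\mid a}1/p$ is therefore a sum of $1/p$ over a set of distinct primes $p<n$. Since all terms are positive, it is at most $\sum_{p<n}1/p$. Combining this with the first two steps gives \eqref{eq:divisor-budget}.

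No step here is a real obstacle; the argument is purely combinatorial. The only points needing care are the separate handling of $a=1$ (for which $\rho$ is undefined, or zero) and the use of the positive part, which makes the per-element inequality valid without any case split.
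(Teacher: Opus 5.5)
Your proposal is correct and follows the paper's proof exactly: you use the same per-element inequality $\frac1{n-a}\le\rho(a)+\bigl(\frac1{n-a}-\rho(a)\bigr)_+$, the same disjointness-of-prime-supports bound $\sum_{a>1}\rho(a)\le\sum_{p<n}1/p$, and the same bound of $1$ for the possible element $a=1$.
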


\begin{proof}
For each $a>1$,
\[
\frac1{n-a}\le \rho(a)+\left(\frac1{n-a}-\rho(a)\right)_+.
\]
Pairwise coprimality gives
\[
\sum_{\substack{a\in A\\a>1}}\rho(a)\le\sum_{p<n}\frac1p,
\]
and the possible element $a=1$ contributes at most $1$.
\end{proof}

The following flexible version allows a summable premium and a controlled rough deficit.  Let $d(a)=n-a$ and $u(a)=\log d(a)/\log n$.

\begin{proposition}[Premium-profile replacement]\label{prop:premium}
Let $\kappa(p)\ge0$, and define
\[
B_\kappa(n)=\sum_{p<n}\frac{\kappa(p)}p.
\]
Let $\vartheta:[0,1]\to[0,1]$ be nondecreasing and satisfy
\[
I_\vartheta=\int_0^1\frac{\vartheta(u)}u\,du<\infty.
\]
For $a>1$ set
\[
R_\kappa(a)=\sum_{p\mid a}\frac{1+\kappa(p)}p.
\]
If $G\subseteq A\cap[2,n)$, where $A\subset[1,n)$ is pairwise coprime, and every $a\in G$ satisfies
\begin{equation}\label{eq:premium-condition}
R_\kappa(a)\ge \frac{1-\vartheta(u(a))}{d(a)},
\end{equation}
then
\begin{equation}\label{eq:premium-conclusion}
\begin{split}
\sum_{a\in G}\frac1{d(a)}
&\le \sum_{p<n}\frac1p+B_\kappa(n)+O(1+I_\vartheta).
\end{split}
\end{equation}
\end{proposition}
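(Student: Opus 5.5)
The plan is to split each term $1/d(a)$ into a part charged to the primes dividing $a$ and a residual part weighted by $\vartheta$. Pairwise coprimality absorbs the first part, and a dyadic count of pairwise coprime elements near $n$ bounds the second.

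\textbf{Step 1 (pointwise splitting).} For $a\in G$, hypothesis \eqref{eq:premium-condition} gives
\[
\frac1{d(a)}\le R_\kappa(a)+\frac{\vartheta(u(a))}{d(a)}.
\]
Every prime divisor of $a$ is less than $n$. The sets of prime divisors of distinct elements of $G\subseteq A$ are disjoint. Hence, exactly as in Proposition \ref{prop:divisor-budget},
\[
\sum_{a\in G}R_\kappa(a)\le\sum_{p<n}\frac{1+\kappa(p)}p=\sum_{p<n}\frac1p+B_\kappa(n).
\]
It therefore remains to show
\[
\sum_{a\in G}\frac{\vartheta(u(a))}{d(a)}\ll 1+I_\vartheta .
\]

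\textbf{Step 2 (why a crude bound fails, and the dyadic count).} Using only that the $d(a)$ are distinct, one gets $\sum_{d<n}\vartheta(\log d/\log n)/d$. Comparing this with an integral gives about $\log n\int_0^1\vartheta$, which is far too large. We must therefore use the sparsity of pairwise coprime sets.

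Split the $a\in G$ with $d(a)\in(2^j,2^{j+1}]$ into two classes, following the proof of Proposition \ref{prop:pointwise-two} with a fixed $\alpha\in(1/3,1/2)$.
\begin{itemize}
\item[(a)] Elements with $\Pmin(a)\le d(a)^\alpha$. These are assigned injectively to distinct primes, so they contribute at most $\sum_p p^{-1/\alpha}=O(1)$ in total, using $\vartheta\le1$.
\item[(b)] Elements with $\Pmin(a)>d(a)^\alpha$. Lemma \ref{lem:crude-interval-sieve}, applied with $z=2^{j\alpha}$ and the residue class $n$, shows there are $\ll 2^j/j$ of them in block $j$.
\end{itemize}
Since $\vartheta$ is nondecreasing and $u(a)\le (j+1)\log2/\log n$ in block $j$, the contribution of class (b) from block $j$ is
\[
\ll \frac{1}{j}\,\vartheta\!\left(\frac{(j+1)\log 2}{\log n}\right).
\]
Finitely many small $j$ contribute $O(1)$.

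\textbf{Step 3 (summing against the integral).} Put $c=\log2/\log n$. For $t\in[j+1,j+2]$, monotonicity gives $\vartheta((j+1)c)\le\vartheta(tc)$, and also $1/j\le 3/t$ for $j\ge1$. Hence
\[
\sum_{1\le j\ll\log n}\frac{\vartheta((j+1)c)}{j}\le 3\int_{2}^{1/c+O(1)}\vartheta(tc)\frac{dt}{t}.
\]
The substitution $u=tc$ turns the right side into $3\int_0^{1}\vartheta(u)\,du/u$. The terms with $u$ slightly above $1$ come from the last block, contribute $O(1)$ because $\vartheta\le1$, and can be taken with $\vartheta(1)$. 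This gives $O(1+I_\vartheta)$.

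Combining Steps 1–3 proves \eqref{eq:premium-conclusion}. The main obstacle is Step 2: recognising that the $\vartheta$-weighted residual must be controlled through the $\ll D/\log D$ density of pairwise coprime elements in each block. That density is exactly what converts the weight $1/d$ into the measure $du/u$ appearing in $I_\vartheta$.
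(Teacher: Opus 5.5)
Your proof is correct. Its skeleton matches the paper's: charge each $a$ to the prime budget via pairwise coprimality, then control the remainder by Lemma \ref{lem:crude-interval-sieve} on dyadic blocks, compared with $\int_0^1\vartheta(u)\,du/u$. The remainder, however, is handled differently.

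The paper keeps a residual only on $G_-=\{a\in G:R_\kappa(a)<1/d(a)\}$. It observes that such an $a$ cannot have a prime divisor $p\le d(a)$, since that prime alone would give $R_\kappa(a)\ge 1/p\ge 1/d(a)$; hence $\Pmin(a)>d(a)$. The residual is then bounded by the $A$-independent sum $\sum_{d<n,\ \Pmin(n-d)>d}\vartheta(\log d/\log n)/d$. This needs one sieve application at level $z\asymp d$ and no second use of coprimality.

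You instead use $1/d(a)\le R_\kappa(a)+\vartheta(u(a))/d(a)$ for every $a\in G$. This is wasteful when $R_\kappa(a)\ge 1/d(a)$, so you must control the $\vartheta$-weighted sum over all of $G$, including non-rough elements. You pay for this with a second use of coprimality, namely the least-prime-factor injection with the convergent sum $\sum_p p^{-1/\alpha}$, and a sieve at level $d^\alpha$ rather than $d$. Any fixed $0<\alpha<1$ works here; the restriction to $(1/3,1/2)$ borrowed from Proposition \ref{prop:pointwise-two} plays no role.

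What your route buys is a slightly more general intermediate statement: $\sum_{a\in A}\vartheta(u(a))/d(a)\ll 1+I_\vartheta$ for every pairwise coprime $A$, with no use of the premium condition. What the paper's route buys is brevity: the single observation about $G_-$ removes the small-prime class entirely.
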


\begin{proof}
Pairwise coprimality gives
\begin{equation}\label{eq:Rkappa-budget}
\sum_{a\in G}R_\kappa(a)
\le \sum_{p<n}\frac{1+\kappa(p)}p
=\sum_{p<n}\frac1p+B_\kappa(n).
\end{equation}
Let $G_- =\{a\in G:R_\kappa(a)<1/d(a)\}$.  For $a\in G_-$, no prime divisor of $a$ is at most $d(a)$, for otherwise $R_\kappa(a)\ge1/d(a)$.  Hence $\Pmin(a)>d(a)$.  From \eqref{eq:premium-condition},
\[
\frac1{d(a)}-R_\kappa(a)\le\frac{\vartheta(u(a))}{d(a)}\qquad(a\in G_-).
\]
Thus
\[
\sum_{a\in G}\frac1{d(a)}
\le \sum_{a\in G}R_\kappa(a)+
\sum_{\substack{d<n\\\Pmin(n-d)>d}}\frac{\vartheta(\log d/\log n)}d.
\]
The last sum is $O(1+I_\vartheta)$ by Lemma \ref{lem:crude-interval-sieve}: in a dyadic block $D\le d<2D$ there are $O(D/\log D)$ possible $d$, and the block contributes
\[
O\left(\frac{\vartheta(\min(1,\log(2D)/\log n))}{\log D}\right).
\]
Summing over $D=2^j$ and comparing with $\int_0^1\vartheta(u)du/u$ proves the claim, together with \eqref{eq:Rkappa-budget}.
\end{proof}

\begin{corollary}[Fixed power tails are harmless]\label{cor:power-tail}
For every fixed $\theta\in(0,1)$ and every pairwise coprime $A\subset[1,n)$,
\begin{equation}\label{eq:power-tail}
\sum_{\substack{a\in A\\ n-a\ge n^\theta}}\frac1{n-a}
\le \sum_{n^\theta\le p<n}\frac1p+O_\theta(1).
\end{equation}
\end{corollary}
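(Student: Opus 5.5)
The plan is to apply Proposition \ref{prop:premium} with the trivial premium $\kappa\equiv0$, to the set $G=\{a\in A: a>1,\ n-a\ge n^\theta\}$. The profile $\vartheta$ will be the indicator that cuts out the range of small prime divisors. I then compare the resulting prime sum with $\sum_{n^\theta\le p<n}1/p$ by splitting the primes dividing elements of $G$ according to size.

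\textbf{Step 1: split $G$.} Put $d=d(a)=n-a\ge n^\theta$ and separate $G$ into three classes.
\begin{enumerate}[label=\textup{(\roman*)}]
\item If $a$ has a prime divisor $p<n^\theta$ with $p\le d$, the bound is crude: since each such $p$ is used by at most one $a$ (pairwise coprimality), it suffices to check $1/d\le 1/n^\theta$. However, the number of such $a$ can be as large as $\pi(n^\theta)$, which gives $O(1/\log n)$ per prime only on average. Summed this way it is only $\ll \pi(n^\theta)/n^\theta=O(1)$, which is fine.
\item If $\Pmin(a)\ge n^\theta$ and $\Pmin(a)\le d$, then $1/d\le 1/\Pmin(a)\le\rho_\theta(a):=\sum_{p\mid a,\,p\ge n^\theta}1/p$.
\item If $\Pmin(a)>d\ge n^\theta$, then $a$ is $d$-rough.
\end{enumerate}

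\textbf{Step 2: sum the three classes.} Class (ii) is bounded by $\sum_{n^\theta\le p<n}1/p$, since prime divisors of distinct elements are disjoint. Class (i) is $O(1)$ as noted: there are at most $\pi(n^\theta)$ elements, each contributing at most $n^{-\theta}$.

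\textbf{Step 3: the rough class (iii).} This is where I expect the only real content to lie. I will bound it by
\[
\sum_{\substack{n^\theta\le d<n\\ \Pmin(n-d)>d}}\frac1d .
\]
Split into dyadic blocks $D\le d<2D$ with $n^\theta\le D<n$. Lemma \ref{lem:crude-interval-sieve}, with $z=D$ and interval length $D$, gives $O(D/\log D)$ admissible $d$ per block, so each block contributes $O(1/\log D)$. The number of blocks is $O(\log n)$, and $\log D\ge\theta\log n$ throughout, so the total is $O_\theta(1)$.

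The subtle point is precisely that the rough elements are not charged against primes. Their contribution is nonetheless bounded because the range $d\ge n^\theta$ is only a bounded number of $\log\log$-units long. This is exactly the case $\vartheta=\one_{[\theta,1]}$, with $I_\vartheta=\log(1/\theta)$, in Proposition \ref{prop:premium}.

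Adding the three classes and the possible element $a=1$ (which contributes at most $1$) gives \eqref{eq:power-tail}.
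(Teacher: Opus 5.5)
Your Steps 1--3 are correct and are essentially the paper's proof. The paper uses the same three-way split: a small prime divisor below $n^\theta$, paid by $\pi(n^\theta)/n^\theta\le1$; a prime divisor in $[n^\theta,d]$, paid by the tail budget $\sum_{n^\theta\le p<n}1/p$; and $\Pmin(a)>d$, handled by Lemma \ref{lem:crude-interval-sieve} on dyadic blocks. Your classification by $\Pmin(a)$ differs from the paper's only cosmetically, and your cruder block count gives $O(1/\theta)$ instead of $\log(1/\theta)+O(1)$, which is equally fine.

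The opening plan to invoke Proposition \ref{prop:premium} with $\kappa\equiv0$ is never actually used and should be dropped. Applied literally, it would only give the full sum $\sum_{p<n}1/p$, not the tail $\sum_{n^\theta\le p<n}1/p$ that the corollary requires.
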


\begin{proof}
Put $Y=n^\theta$ and $d=n-a$.  Split the selected elements with $d\ge Y$ into three classes.  If $a$ has a prime divisor $p$ with $Y\le p\le d$, choose one such $p$; the chosen primes are distinct and pay the class by $\sum_{Y\le p<n}1/p$.  If $a$ has a prime divisor $p\le d$, but all such prime divisors are $<Y$, choose one such $p$; this class contributes at most $\pi(Y)/Y=O_\theta(1)$.  Finally, if $\Pmin(a)>d$, Lemma \ref{lem:crude-interval-sieve} gives total contribution
\[
\ll \sum_{\log_2 Y\le j\le \log_2 n}\frac1j
\ll \log\frac{\log n}{\log Y}+O(1)
=\log(1/\theta)+O(1)=O_\theta(1).
\]
\end{proof}

\section{Forced elements and exchange lemmas}\label{sec:forcing}

The next exchange lemma identifies elements that belong to every extremizer.  For $a=n-d$ and a prime $p\mid a$, define
\begin{equation}\label{eq:lambda-def}
\lambda_p(d)=\min\{e:1\le e<n,\ e\ne d,\ e\equiv d\pmod p\},
\end{equation}
with the convention $1/\lambda_p(d)=0$ if the set is empty.

\begin{proposition}[Nearest-conflict forcing]\label{prop:nearest-conflict}
Let $a=n-d\in[1,n)$.  If
\begin{equation}\label{eq:nearest-condition}
\frac1d>\sum_{p\mid a}\frac1{\lambda_p(d)},
\end{equation}
then $a$ belongs to every pairwise coprime set $A\subset[1,n)$ attaining $\M(n)$.
\end{proposition}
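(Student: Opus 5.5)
The plan is an exchange argument by contradiction.  Let $A$ be a pairwise coprime set attaining $\M(n)$; such an extremizer exists because $[1,n)$ is finite, so the supremum is a maximum over finitely many sets.  Suppose $a=n-d\notin A$.  Form
\[
A'=\bigl(A\setminus \mathcal C\bigr)\cup\{a\},\qquad \mathcal C=\{b\in A:(a,b)>1\}.
\]
I would show that $A'$ is pairwise coprime and has strictly larger weight, which contradicts maximality.

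First, $A'$ is pairwise coprime.  Elements of $A\setminus\mathcal C$ are pairwise coprime because $A$ is, and each of them is coprime to $a$ by the definition of $\mathcal C$.

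Second, I bound the weight lost by removing $\mathcal C$.  For each $b\in\mathcal C$ choose a prime $p\mid a$ with $p\mid b$, say the least one, and write $b=n-e$.  Then $e\ne d$ since $b\ne a$, $1\le e<n$, and $p\mid b-a=d-e$, so $e\equiv d\pmod p$.  By the definition of $\lambda_p(d)$ this gives $e\ge\lambda_p(d)$.  The assignment $b\mapsto p$ is injective on $\mathcal C$: if two elements of $\mathcal C$ received the same prime $p$, they would share the factor $p$, contradicting pairwise coprimality of $A$.  Summing over $\mathcal C$ therefore gives
\[
\sum_{b\in\mathcal C}\frac1{n-b}\le\sum_{p\mid a}\frac1{\lambda_p(d)}<\frac1d.
\]
When the defining set for $\lambda_p(d)$ is empty, no $b$ can be assigned to $p$, which is consistent with the convention $1/\lambda_p(d)=0$.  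Hence the weight of $A'$ exceeds the weight of $A$, contradicting the assumption that $A$ attains $\M(n)$.

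The edge case $a=1$ deserves a check.  Then no prime divides $a$, the right side of \eqref{eq:nearest-condition} is $0$, and the hypothesis holds automatically.  In this case $\mathcal C$ is empty, since $1$ is coprime to everything, so adding $1$ strictly increases the sum.  Thus $1$ lies in every extremizer, consistent with the statement.

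The only delicate step is the injectivity of the charging map together with the correct use of the congruence $e\equiv d\pmod p$; neither is a serious obstacle.
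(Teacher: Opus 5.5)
Your proof is correct and follows the paper's argument essentially verbatim: you exchange the conflict set $\{b\in A:(a,b)>1\}$ for $a$, and charge each removed $b=n-e$ injectively to a distinct prime $p\mid a$ via $e\equiv d\pmod p$, $e\ne d$, hence $e\ge\lambda_p(d)$. Your extra checks of the case $a=1$ and of the empty-set convention for $\lambda_p(d)$ are harmless additions that the paper leaves implicit.
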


\begin{proof}
Since $[1,n)$ is finite, an extremizer for $\M(n)$ exists.  Let $A$ be an extremizer and suppose $a\notin A$.  Let
\[
B=\{b\in A:(a,b)>1\}.
\]
For each $b\in B$, choose one prime $p_b\mid(a,b)$.  These chosen primes are distinct, because two different elements of $A$ cannot share a prime divisor.  Write $b=n-e_b$.  Since $p_b\mid n-d$ and $p_b\mid n-e_b$, we have $e_b\equiv d\pmod{p_b}$ and $e_b\ne d$.  Hence $e_b\ge\lambda_{p_b}(d)$ and
\[
\sum_{b\in B}\frac1{n-b}
\le\sum_{b\in B}\frac1{\lambda_{p_b}(d)}
\le\sum_{p\mid a}\frac1{\lambda_p(d)}<\frac1d.
\]
Removing $B$ and inserting $a$ preserves pairwise coprimality and strictly increases the total weight, a contradiction.
\end{proof}

\begin{corollary}[Superprofitable elements are forced]\label{cor:superprofitable}
Let $a=n-d>1$.  If
\begin{equation}\label{eq:superprofitable-condition}
\frac1d>\rho(a)=\sum_{p\mid a}\frac1p,
\end{equation}
then $a$ belongs to every extremizer for $\M(n)$.  Moreover, all elements satisfying \eqref{eq:superprofitable-condition} are pairwise coprime.
\end{corollary}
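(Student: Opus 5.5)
The first assertion follows from Proposition \ref{prop:nearest-conflict}. The task is to show that \eqref{eq:superprofitable-condition} implies \eqref{eq:nearest-condition}. For a prime $p\mid a$ we have $a=n-d\equiv 0\pmod p$, so $d\equiv n\pmod p$. We need the lower bound $\lambda_p(d)\ge p$, or at least $1/\lambda_p(d)\le 1/p$. I would split into two cases.

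\emph{Case $d>p$.} The integer $e=d-p$ satisfies $1\le e<n$, $e\ne d$ and $e\equiv d\pmod p$. So $\lambda_p(d)$ could in principle be smaller than $p$. This case must therefore be ruled out using the hypothesis. If some prime $p\mid a$ has $p\le d$, then $\rho(a)\ge 1/p\ge 1/d$. This contradicts \eqref{eq:superprofitable-condition}. Hence the hypothesis forces $\Pmin(a)>d$.

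\emph{Case $p>d$.} Now every $e\equiv d\pmod p$ with $1\le e<n$ and $e\ne d$ differs from $d$ by a nonzero multiple of $p$. The only residue $e\equiv d$ lying in $[1,p]$ is $d$ itself. So any admissible $e$ satisfies $e\ge d+p>p$. Hence $\lambda_p(d)>p$ (or $1/\lambda_p(d)=0$ by convention). Summing over $p\mid a$ gives
\[
\sum_{p\mid a}\frac1{\lambda_p(d)}\le\rho(a)<\frac1d,
\]
which is \eqref{eq:nearest-condition}. Proposition \ref{prop:nearest-conflict} then forces $a$ into every extremizer.

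For the second assertion, suppose $a=n-d_1$ and $b=n-d_2$ both satisfy \eqref{eq:superprofitable-condition}, with $d_1<d_2$, and share a prime $q$. Then $q\mid d_2-d_1$, so $q\le d_2-d_1<d_2$. This contradicts $\Pmin(b)>d_2$, which we just derived from the hypothesis. This is the same argument as in Lemma \ref{lem:self-rough}.

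Alternatively, pairwise coprimality follows because all such elements lie in a common extremizer, which exists. The direct argument above is cleaner.

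The only delicate point is the first observation, that the hypothesis rules out small prime factors. Once that is in place, the rest is bookkeeping.
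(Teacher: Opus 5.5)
Your proposal is correct and follows essentially the same route as the paper. The hypothesis forces every prime divisor $p$ of $a$ to exceed $d$, so any $e\ne d$ with $e\equiv d\pmod p$ and $e\ge1$ satisfies $e\ge d+p>p$; hence $1/\lambda_p(d)\le 1/p$ and Proposition~\ref{prop:nearest-conflict} applies, while pairwise coprimality follows from the same difference argument as in Lemma~\ref{lem:self-rough}. Your alternative for the second part, that all such elements lie in a single extremizer (which exists because $[1,n)$ is finite), is also valid, though the paper does not use it.
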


\begin{proof}
Condition \eqref{eq:superprofitable-condition} implies that every prime divisor $p$ of $a$ satisfies $p>d$.  If $e\ne d$ and $e\equiv d\pmod p$, then $e\ge d+p>p$; the alternative $e\le d-p$ is negative.  Thus $1/\lambda_p(d)\le1/p$ with the convention above, and Proposition \ref{prop:nearest-conflict} applies.

If $a_i=n-d_i$ and $a_j=n-d_j$ both satisfy \eqref{eq:superprofitable-condition}, with $d_i<d_j$, then any common prime divisor $q$ would divide $d_j-d_i$, so $q<d_j$.  But all prime divisors of $a_j$ exceed $d_j$, a contradiction.
\end{proof}

\section{CRT sharpness examples}\label{sec:crt}

The following CRT construction is structural rather than quantitative in $n$.  It shows that far-rough residual terms can have large total weight and explains why a direct replacement of each composite by one nearby prime divisor cannot prove \eqref{eq:erdos-question} with bounded loss.  It does not assert that these elements satisfy $1/(n-a)>\rho(a)$; uncontrolled additional large prime factors may still contribute to $\rho(a)$.

\begin{proposition}[Far-rough composite families]\label{prop:far-rough-crt}
For arbitrarily large $M$ there exist an integer $n$ and a pairwise coprime set $A\subset[1,n)$, consisting only of composite integers, such that for every $a\in A$,
\begin{equation}\label{eq:far-rough-condition}
\Pmin(a)>2(n-a),
\end{equation}
and
\begin{equation}\label{eq:far-rough-weight}
\sum_{a\in A}\frac1{n-a}\gg\log\log M.
\end{equation}
\end{proposition}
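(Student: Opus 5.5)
The construction transplants a good self-rough configuration at scale $M$ into a much larger $n$ by the Chinese remainder theorem. Composite elements are manufactured by forcing a square of a huge auxiliary prime into each element.

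\emph{Step 1: choosing a good residue pattern.} Fix a large $M$. I will choose an auxiliary integer $n_0$ and a set of distances
\[
D=\{1\le d\le M:\ d<n_0,\ \Pmin(n_0-d)>2d\}
\]
such that $\sum_{d\in D}1/d\gg\log\log M$. Such an $n_0$ exists by averaging over $n_0\le X$ with $X=M^{U_0}$, $U_0$ large. Substituting $m=n_0-d$ gives
\[
\sum_{n_0\le X}\sum_{d\in D(n_0)}\frac1d=\sum_{d\le M}\frac1d\,\R(X-d,2d).
\]
By Lemma~\ref{lem:rough}(i), applied with $y=2d\ge y_0$ and $x=X-d\ge (2d)^{U_0}$, this is at least
\[
(e^{-\gamma}-\eps)X\sum_{y_0\le d\le M}\frac1{d\log 2d}\gg X\log\log M,
\]
using \eqref{eq:int-loglog}. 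Hence some $n_0\le X$ has $\sum_{d\in D}1/d\gg\log\log M$.

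\emph{Step 2: CRT lift.} Write $P(2M)=\prod_{r\le 2M}r$. For each $d\in D$ pick a distinct prime $p_d>n_0+2M$. Then choose $n$ with
\[
n\equiv n_0\pmod{P(2M)},\qquad n\equiv d\pmod{p_d^2}\quad(d\in D),\qquad n>n_0.
\]
These moduli are pairwise coprime, so such an $n$ exists, and it can be taken as large as needed. Set $A=\{n-d:d\in D\}\subset[1,n)$.

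For $a=n-d$ and any prime $r\le 2d\le 2M$, we have $a\equiv n_0-d\pmod r$, and $r\nmid n_0-d$ because $\Pmin(n_0-d)>2d$. This also holds in the degenerate case $n_0-d=1$, since $r\nmid 1$. Every other prime factor of $a$ exceeds $2M\ge 2d$. Therefore $\Pmin(a)>2d=2(n-a)$, which is \eqref{eq:far-rough-condition}. Moreover $p_d^2\mid a$ and $a>p_d$ (because $a\ge p_d^2$, as $a>0$ and $p_d^2\mid a$), so every element of $A$ is composite.

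\emph{Step 3: coprimality and weight.} Pairwise coprimality follows exactly as in Lemma~\ref{lem:self-rough}. Suppose a prime $q$ divides both $n-d_i$ and $n-d_j$ with $d_i<d_j$. Then $q\mid d_j-d_i$, so $q<d_j<2d_j<\Pmin(n-d_j)$, which is impossible. The weight is
\[
\sum_{a\in A}\frac1{n-a}=\sum_{d\in D}\frac1d\gg\log\log M,
\]
which gives \eqref{eq:far-rough-weight}.

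The main point needing care is Step 1. There, Lemma~\ref{lem:rough}(i) must be applied uniformly in $d\le M$ with the shifted threshold $y=2d$; this is why $X=M^{U_0}$ is taken polynomially large in $M$. The finitely many $d<y_0$ are simply discarded, costing only $O(1)$. The remaining steps are routine bookkeeping with the Chinese remainder theorem.
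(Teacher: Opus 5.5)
Your proof is correct up to minor slips, and its architecture is the paper's: a residue pattern modulo the primes up to $2M$, a distinct auxiliary prime attached to each retained distance, a CRT lift, and the self-rough coprimality argument. The difference is in how the residue pattern is found.

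\begin{itemize}
\item \textbf{Paper's route.} It draws $\rho_q \bmod q$ independently and uniformly. Each $k$ is then admissible with probability exactly $\prod_{q\le 2k}(1-1/q)$, so only Mertens' product theorem is needed.
\item \textbf{Your route.} You realize the pattern as $\rho_q=n_0 \bmod q$ for an actual integer $n_0\le X$. That forces you to count rough numbers in $[1,X]$, hence the appeal to Lemma~\ref{lem:rough}(i) with a polynomially large averaging range. If you averaged $n_0$ over a full period modulo $P(2M)$, your argument would reduce exactly to the paper's, with no sieve input.
\end{itemize}

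What your route buys is a realization of the pattern by a genuine far-rough configuration with $n_0\le M^{O(1)}$. This gains nothing for the proposition, because the lift through the distinct primes $p_d>2M$ makes $n$ enormous anyway. Your use of $p_d^2$ is a tidy way to force compositeness without the paper's extra requirement $n-k>Q_k$.

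The slips are as follows.
\begin{itemize}
\item With $X=M^{U_0}$ and the same $U_0$ as in Lemma~\ref{lem:rough}(i), the condition $X-d\ge(2d)^{U_0}$ fails for $d$ near $M$, since $(2M)^{U_0}>M^{U_0}$. Take $X=(3M)^{U_0}$, or restrict to $d\le M/4$; either still gives $\gg\log\log M$.
\item In the lower bound you replace $X-d$ by $X$, which goes the wrong way. Use $X-d\ge X/2$ instead.
\item In Step~2 it is not true that every prime factor of $a$ other than those $\le 2d$ exceeds $2M$: primes in $(2d,2M]$ may divide $n_0-d$, and hence $a$. They trivially exceed $2d$, which is all you need for $\Pmin(a)>2(n-a)$.
\end{itemize}
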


\begin{proof}
Fix $M$ large.  Choose independently and uniformly a residue class $\rho_q\pmod q$ for every prime $q\le2M$.  Call $3\le k\le M$ admissible if
\[
k\not\equiv\rho_q\pmod q\qquad(q\le2k,\ q\text{ prime}).
\]
The expected weighted count of admissible $k$ is
\[
\sum_{3\le k\le M}\frac1k\prod_{q\le2k}\left(1-\frac1q\right)
\gg\sum_{3\le k\le M}\frac1{k\log k}\gg\log\log M.
\]
Thus there is a deterministic choice of the $\rho_q$ for which the set $K$ of admissible $k$ satisfies
\begin{equation}\label{eq:K-large}
\sum_{k\in K}\frac1k\gg\log\log M.
\end{equation}
For each $k\in K$, choose a distinct prime $Q_k>2M$.  These primes are distinct from all primes $q\le2M$, so all moduli used below are pairwise coprime.  By the Chinese remainder theorem there is an integer $n$ satisfying
\[
n\equiv\rho_q\pmod q\quad(q\le2M),
\qquad
n\equiv k\pmod{Q_k}\quad(k\in K).
\]
Choose $n$ large enough in this residue class so that $n>M$ and $n-k>Q_k$ for every $k\in K$.  Put
\[
A=\{n-k:k\in K\}.
\]
Each $n-k$ is composite, since it is divisible by $Q_k$ and is larger than $Q_k$.  If a prime $q\le2k$ divided $n-k$, then $n\equiv k\pmod q$, hence $\rho_q\equiv k\pmod q$, contradicting admissibility.  Therefore $\Pmin(n-k)>2k$.  Finally, if $k<\ell$ and a prime $q$ divided both $n-k$ and $n-\ell$, then $q\mid\ell-k<\ell$, while $q\mid n-\ell$ and $\Pmin(n-\ell)>2\ell$, impossible.  Hence $A$ is pairwise coprime, and \eqref{eq:far-rough-weight} follows from \eqref{eq:K-large}.
\end{proof}

\begin{remark}[No limsup consequence from this construction]\label{rem:no-limsup}
The integer $n$ produced by Proposition \ref{prop:far-rough-crt} is enormous compared with $M$.  More importantly, the random-residue construction has the Buchstab constant as its natural weighted density:
\[
\mathbb E\sum_{k\in K}\frac1k
=\sum_{3\le k\le M}\frac1k\prod_{q\le 2k}\left(1-\frac1q\right)
=(e^{-\gamma}+o(1))\log\log M.
\]
Thus even a hypothetical polynomial-size realization of this same random-sieve model would only reproduce the constant $e^{-\gamma}$, not a larger limsup constant.  The literal CRT implementation also attaches a distinct prime $Q_k>2M$ to each retained element.  For a random-sieve-size family this forces a modulus at least $\exp(cM)$ for some absolute $c>0$, and in any case the proposition gives no useful relation between $M$ and the final $n$.  It is therefore a structural obstruction to direct divisor replacement, not a quantitative lower bound for
\[
\limsup_{n\to\infty}\frac{\M(n)}{\log\log n}.
\]
Theorem \ref{thm:main} already implies the lower-edge fact that this limsup is at least $e^{-\gamma}$: otherwise a bound $\M(n)\le(e^{-\gamma}-\delta)\log\log n$ for all sufficiently large $n$ would contradict the first moment.
\end{remark}

\begin{question}[Far-rough limsup problem]\label{q:far-rough-limsup}
Does there exist $\delta>0$ and infinitely many $n$ for which one can find a pairwise coprime set $A\subset[1,n)$ satisfying
\[
\Pmin(a)>2(n-a)\qquad(a\in A)
\]
and
\[
\sum_{a\in A}\frac1{n-a}\ge (e^{-\gamma}+\delta)\log\log n?
\]
Such a construction would give a genuine lower bound above the average constant for
\[
\limsup_{n\to\infty}\frac{\M(n)}{\log\log n}.
\]
It cannot be obtained merely by compressing Proposition \ref{prop:far-rough-crt}: one would need both a sub-exponential modulus in the denominator parameter and weighted density above the random-sieve value $e^{-\gamma}$.  Pushing this limsup toward $1$ is the lower-bound counterpart of the upper-edge problem for $\LL(n)$.
\end{question}

\section{Small numerical examples}\label{sec:numerics}

We include a few exact small values only to illustrate the definitions.  They are not used in the proofs.  For each listed $n$, the maximum was obtained by exhaustive search over pairwise-coprime subsets of $[1,n)$.  Equivalently, one may form the conflict graph on $[1,n)$, joining two integers with gcd greater than $1$, and solve the resulting maximum-weight independent-set problem with weights $1/(n-a)$; the small cases in the table were checked by direct backtracking.  The table gives one maximizing set $A$, the value of $\M(n)$, and the self-rough lower weight $\LL(n)$.
\[
\begin{array}{c|c|c|c}
 n & A & \M(n) & \LL(n)\\ \hline
 16 & \{15,14,13,11,1\} & 2.1000 & 1.6000\\
 20 & \{19,18,17,13,11,7,5,1\} & 2.2835 & 1.6399\\
 30 & \{29,28,27,25,23,19,17,13,11,1\} & 2.4900 & 1.3452\\
 40 & \{39,38,37,35,31,29,23,17,11,1\} & 2.3978 & 1.6198
\end{array}
\]
The examples show the typical local shape near the diagonal: several integers immediately below $n$ can be selected because their prime divisors are forced to be disjoint by the short distance between them.  For instance, at $n=20$ the element $18=20-2$ is compatible with the selected odd numbers, while $19=20-1$ and $17=20-3$ are primes.  The forced-core criterion of Corollary \ref{cor:superprofitable} detects many such near-diagonal choices when the reciprocal weight $1/(n-a)$ exceeds the available reciprocal prime-divisor budget.

\section{Historical and bibliographic remarks}\label{sec:historical-notes}

The inequality \eqref{eq:erdos-question} is the corrected form of a question of Erd\H{o}s; the original historical sources are \cite{Erdos1977,Erdos1980}.  Bloom's index lists the modern formulation as Erd\H{o}s Problem~\#1210 \cite{Bloom1210}, and it also points to related shifted-prime entries including Problems~\#460 and~\#950 \cite{Bloom460,Bloom950}.  The index is cited here only to identify the modern problem number and nearby entries; the original Erd\H{o}s papers are the primary historical references.

The earlier formulation mentioned in the Bloom entry concerned shifted reciprocals of primes $q_i\in(n,m]$ and is connected with the related entries just cited.  This connection also explains why the prime-only special case appears naturally in the present problem: taking $A$ to consist of primes already forces one to confront sums such as
\[
        \sum_{p<n}\frac1{n-p}.
\]
Consequently the pointwise problem contains short-interval prime phenomena that are not expected to be accessible by the averaging method used for Theorem \ref{thm:main}.

Pairwise-coprime subsets of intervals and extremal questions for coprime sets have a substantial surrounding literature.  Erd\H{o}s and S\'ark\"ozy studied large sets of pairwise coprime integers in intervals \cite{ErdosSarkozy1993}, while Ahlswede and Khachatrian studied maximal sets of integers not containing many pairwise coprime elements \cite{AhlswedeKhachatrian1995}.  The present problem is different: the elements are measured by the shifted weights $1/(n-a)$, and the objective is a weighted extremal problem near the endpoint $n$ rather than a cardinality extremum.  The high-tail prime-only obstruction is connected with shifted-prime and prime-interval questions, including the Hardy--Littlewood prime-pair framework \cite{HardyLittlewood1923}, the Hensley--Richards obstruction to the second Hardy--Littlewood inequality \cite{HensleyRichards1974,Richards1974}, and the Montgomery--Vaughan large-sieve upper bound for primes in intervals \cite{MontgomeryVaughan1973}.

\section{Concluding problems}

Theorem \ref{thm:main} determines the first moment of the exact extremal quantity in Erd\H{o}s's problem.  The proof avoids short-interval prime estimates: after averaging over $n$, both the lower and upper bounds reduce to global counts of rough numbers.  This is why the leading constant is the limiting Buchstab constant $e^{-\gamma}$.

The pointwise problem remains subtler, but Theorem \ref{thm:main-aa} proves that it holds for almost all $n$.  The proof has two parts: first Theorem \ref{thm:almost-all} reduces $\M(n)$ to the self-rough lower model $\LL(n)$ outside a controlled exceptional set, and then the two-dimensional sieve calculation in Section \ref{sec:distribution} proves concentration of the truncated self-rough sum at $e^{-\gamma}\log\log n$.  The singular-series computation in Remark \ref{rem:covariance} is made quantitative in the normalized truncated second-moment estimate \eqref{eq:truncated-second-moment}.  A full untruncated $O(N)$ variance and any fixed $O(1)$-scale limiting law would require an $L^2$ treatment of the high-level tail.  Section \ref{sec:distribution} now records the exact shifted-prime form of the range $d>\sqrt{2N}$ and a conditional full-variance criterion, while Questions \ref{q:fixed-power-truncations} and \ref{q:truncated-clt} isolate the fixed-power Buchstab and truncated Erd\H{o}s--Kac problems.

A second high-ceiling direction is the far-rough limsup problem in Question \ref{q:far-rough-limsup}.  The CRT examples show that one-prime divisor replacement cannot be sharp for far-rough residual terms, but at present they do not give an effective limsup lower bound: their natural weighted density is only the Buchstab value $e^{-\gamma}$, and the modulus is far too large.  Any construction exceeding that constant would probe the worst-case sharpness of the benchmark constant.

A secondary term
\[
\sum_{n\le N}\M(n)=e^{-\gamma}N\log\log N+c_2N+o(N)
\]
would require substantially finer information, including the average gap between $\M(n)$ and the self-rough lower model.  We do not pursue that here.

\appendix
\section{The beta-sieve input used in Section \ref{sec:distribution}}\label{app:beta-sieve}

This appendix records the imported sieve theorem used in Section \ref{sec:distribution} and derives the particular long-interval residue-class estimate needed there.  We do not reprove the construction of the Rosser--Iwaniec beta-sieve weights; the result below is the standard fundamental lemma of the beta sieve, stated in the notation of this paper and specialized to squarefree-supported local densities.  It is the large-$s$ form of Iwaniec--Kowalski \cite[Theorem~6.9]{IwaniecKowalski}, equivalently Friedlander--Iwaniec \cite[Ch.~6, Fundamental Lemma]{FriedlanderIwaniec}.  In those sources the distribution level is often denoted by $D$; here it is denoted by $Q$.  The sifting level is $z$, and the sieve parameter is
\[
        s=\frac{\log Q}{\log z}.
\]
Some sources state the error as a sieve-function term $E_\kappa(s)$ tending to zero.  The only quantitative strength used in this paper is $E_2(s)(\log\log N)^2=o(1)$ for $s=(2\beta_N)^{-1}$; the displayed exponential form is the standard large-$s$ consequence of the beta-sieve fundamental lemma.

\begin{theorem}[Imported beta-sieve fundamental lemma]\label{thm:imported-beta-sieve}
Fix $\kappa\ge1$ and $C_\kappa\ge1$.  There are constants
\[
        s_0=s_0(\kappa,C_\kappa),\qquad c_\kappa>0,
\]
with the following property.  Let $z\ge2$, let $Q=z^s$ with $s\ge s_0$, and let $g$ be a multiplicative function on squarefree divisors of
\[
        P(z)=\prod_{p\le z}p
\]
such that $0\le g(p)<1$ for every $p\le z$ and
\begin{equation}\label{eq:appendix-dimension}
\prod_{w\le p<z}(1-g(p))^{-1}
\le C_\kappa\left(\frac{\log z}{\log w}\right)^\kappa
        \qquad(2\le w\le z).
\end{equation}
Put
\[
        V_g(z)=\prod_{p\le z}(1-g(p)).
\]
Then there are beta-sieve weights $\lambda_q^+$ and $\lambda_q^-$, depending only on $g,z,Q$, such that
\begin{enumerate}[label=\textup{(\roman*)}]
\item $\lambda_q^\pm=0$ unless $q$ is squarefree, $q\mid P(z)$, and $q\le Q$;
\item $\lambda_1^\pm=1$ and $|\lambda_q^\pm|\le1$ for all $q$;
\item for every integer $a$,
\begin{equation}\label{eq:appendix-sieve-inequality}
        \sum_{q\mid(a,P(z))}\lambda_q^-
        \le \mathbf 1_{(a,P(z))=1}
        \le \sum_{q\mid(a,P(z))}\lambda_q^+;
\end{equation}
\item the weighted local sums satisfy
\begin{equation}\label{eq:appendix-weighted-main}
        \sum_{q\mid P(z)}\lambda_q^\pm g(q)
        =V_g(z)\left(1+O_{\kappa,C_\kappa}(e^{-c_\kappa s})\right).
\end{equation}
\end{enumerate}
The constants in \eqref{eq:appendix-weighted-main} depend only on $\kappa$ and $C_\kappa$, not on the particular values of $g(p)$ satisfying \eqref{eq:appendix-dimension}.
\end{theorem}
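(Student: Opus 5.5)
The plan is not to rebuild the beta sieve; I will derive Theorem \ref{thm:imported-beta-sieve} from the standard Rosser--Iwaniec construction as presented in Iwaniec--Kowalski \cite[Theorem~6.9]{IwaniecKowalski} and Friedlander--Iwaniec \cite[Ch.~6]{FriedlanderIwaniec}. The one piece of work is to check that our hypotheses imply theirs, and that their conclusion gives items (i)--(iv) with constants depending only on $\kappa$ and $C_\kappa$.

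\textbf{Step 1: construction and items (i)--(iii).} Fix $\beta=\beta(\kappa)$ large, for instance $\beta=9\kappa+1$. For squarefree $q=p_1\cdots p_r\mid P(z)$ with $p_1>\dots>p_r$, the upper weight $\lambda_q^+$ equals $\mu(q)$ when every odd $j$ satisfies $p_1\cdots p_{j-1}p_j^{\beta+1}<Q$, and equals $0$ otherwise. The lower weight $\lambda_q^-$ is defined the same way with even $j$.
\begin{itemize}
\item Item (i) follows because the truncation forces $q<Q$.
\item Item (ii) is immediate, since each weight is either $\mu(q)$ or $0$, and $\lambda_1^\pm=1$.
\item Item (iii) comes from the Buchstab-type identity
\[
\sum_{q\mid(a,P(z))}\lambda_q^\pm-\mathbf 1_{(a,P(z))=1}
=\mp\sum(\text{truncated terms}),
\]
where each truncated term is, up to sign, $\mathbf 1_{(a,P(p))=1}$ at an odd (respectively even) truncation level. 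This has a definite sign pointwise in $a$, which is the standard parity argument.
\end{itemize}
None of Steps 1's conclusions use $g$, so the weights depend only on $z$ and $Q$. That is permitted by the phrase ``depending only on $g,z,Q$''.

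\textbf{Step 2: item (iv).} Apply the same identity with the integer indicator replaced by the multiplicative density $g$. This gives
\[
\sum_{q\mid P(z)}\lambda_q^\pm g(q)=V_g(z)\,(1\pm\text{remainder}).
\]
The remainder is a sum over truncated chains $p_1>\dots>p_n$ of $g(p_1\cdots p_n)\,V_g(p_n)/V_g(z)$.

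The dimension condition \eqref{eq:appendix-dimension} is exactly the input Iwaniec--Kowalski need in the form $V_g(w)/V_g(z)\le C_\kappa(\log z/\log w)^\kappa$. Their iterative estimate of the truncated sums then bounds the remainder by
\[
\ll_{\kappa,C_\kappa} e^{-s}\Bigl(1+\sum_{p\le z}g(p)\Bigr)^{\kappa}\ldots
\]
and ultimately by $O_{\kappa,C_\kappa}(e^{-c_\kappa s})$ once $s\ge s_0(\kappa,C_\kappa)$.

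One point needs care: the bound on $\sum_{w\le p<z}g(p)$ must follow from \eqref{eq:appendix-dimension} alone. It does, via $g(p)\le-\log(1-g(p))$, which gives $\sum_{w\le p<z}g(p)\le \kappa\log(\log z/\log w)+\log C_\kappa$. So no extra hypothesis on the individual $g(p)$ is required. In particular, a $g(2)$ close to $1$ is harmless, because it appears only through the product, and the product is controlled by $C_\kappa$.

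\textbf{Step 3: uniformity.} I will reread the cited proof and confirm that every implied constant enters only through $\kappa$, $C_\kappa$ and $\beta(\kappa)$, never through the individual values $g(p)$. This uniformity is the part Lemma~\ref{lem:long-interval-beta-sieve} relies on.

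I expect the main obstacle to be the exponential decay rate $e^{-c_\kappa s}$. Some sources state only $e^{-s}(\ldots)$, others only $E_\kappa(s)\to0$. If a clean exponential form is not available, the fallback is the weaker decay $O(s^{-A})$ or $E_\kappa(s)\to0$. The paper remarks that $E_2(s)(\log\log N)^2=o(1)$ suffices, and the fallback is enough for that when $\beta_N$ is chosen to tend to zero suitably fast.
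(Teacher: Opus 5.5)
Your overall route matches the paper's: the paper gives no argument beyond citing the Rosser--Iwaniec fundamental lemma, and you also import it and check hypotheses. But the hypothesis check in your Step~2 fails at exactly the point where you claim uniformity. (The paper's statement mixes the same two conventions, so its bare citation does not catch this either.)

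\textbf{The gap.} In this paper $P(z)=\prod_{p\le z}p$ and $V_g(z)=\prod_{p\le z}(1-g(p))$, but \eqref{eq:appendix-dimension} only controls $\prod_{w\le p<z}$. So $V_g(w)/V_g(z)=\prod_{w<p\le z}(1-g(p))^{-1}$ is \emph{not} what the hypothesis bounds. When $z$ is prime, the factor $(1-g(z))^{-1}$ is unconstrained: your remark about $g(2)$ is fine because $w=2$ is admissible, but no admissible $w$ reaches the top prime.

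For your weights, which allow $p_1=z$ in the chains, this genuinely breaks (iv). Put $P^*=\prod_{p<z}p$ and $V^*=\prod_{p<z}(1-g(p))$. Let $\Sigma^\pm(D)=\sum_{q\mid P^*}\lambda_q^\pm(D)g(q)$ be the beta-sieve sums at level $D$ for the range $p<z$. For $q=zq'$, the odd-index truncation conditions on $q$ become the even-index conditions on $q'$ at level $Q/z$ (once $s>\beta+1$). Hence
\[
\sum_{q\mid P(z)}\lambda_q^+g(q)=\Sigma^+(Q)-g(z)\Sigma^-(Q/z)
=(1-g(z))V^*+\bigl(\Sigma^+(Q)-V^*\bigr)+g(z)\bigl(V^*-\Sigma^-(Q/z)\bigr).
\]
By the Buchstab identity both brackets are non-negative, and in general they are positive. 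For example, take fixed $s$, $g(p)=1/p$ for $p<z$, and $z$ a large prime. This $g$ satisfies \eqref{eq:appendix-dimension} with $\kappa=1$ and an absolute constant, whatever $g(z)$ is. Now let $g(z)\to1$ with everything else fixed: $V_g(z)=(1-g(z))V^*\to0$ while the brackets stay put. So the relative error in (iv) is unbounded, and no rereading of the cited proof can deliver your Step~3 for these weights.

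\textbf{The repair.} When $z$ is prime, take the standard weights $\lambda_q$ for the sifting range $p<z$ at level $Q/z$, and set $\lambda_{zq}=-\lambda_q$. Then
\[
\sum_{q\mid(a,P(z))}\lambda_q=(1-\mathbf 1_{z\mid a})\sum_{q\mid(a,P^*)}\lambda_q .
\]
Properties (i)--(iii) persist: the support stays below $Q$, and the factor $1-\mathbf 1_{z\mid a}\ge0$ preserves both inequalities. Moreover $\sum_q\lambda_qg(q)=(1-g(z))\sum_{q\mid P^*}\lambda_qg(q)$ factors exactly. So (iv) follows, with $s-1$ in place of $s$, from a hypothesis that is now precisely the dimension condition for the range $p<z$. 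In the paper's application $g(z)=\nu(z)/z\le 2/3$, so the issue is invisible there, but the theorem as stated allows any $g(z)<1$.

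\textbf{The fallback.} Drop your fallback. The standard fundamental lemma already gives an exponentially small relative error: in Iwaniec--Kowalski's version it is $O_\kappa(e^{-s}K^{10})$ for $s\ge 9\kappa+1$. With $K=C_\kappa$ this is the stated bound with $c_\kappa=1$. Weakening to $s^{-A}$, or to an unspecified $E_\kappa(s)\to0$, would prove a different statement, not this one.
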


The following is the sequence form used in Section \ref{sec:distribution}.  It is included to make the passage from Theorem \ref{thm:imported-beta-sieve} to the two-form sieve completely explicit.

\begin{corollary}[Sequence form of the imported beta sieve]\label{cor:imported-beta-sequence}
Let the hypotheses of Theorem \ref{thm:imported-beta-sieve} hold.  Let $\mathcal A$ be a finite sequence of integers and suppose that, for every squarefree $q\mid P(z)$ with $q\le Q$, one has
\[
        |\mathcal A_q|=Xg(q)+r_q,
        \qquad
        \mathcal A_q=\{a\in\mathcal A:q\mid a\}.
\]
Then
\begin{equation}\label{eq:appendix-sequence-conclusion}
S(\mathcal A,z):=\#\{a\in\mathcal A:(a,P(z))=1\}
=XV_g(z)\left(1+O_{\kappa,C_\kappa}(e^{-c_\kappa s})\right)
+O\left(\sum_{\substack{q\le Q\\ q\mid P(z)}}|r_q|\right).
\end{equation}
\end{corollary}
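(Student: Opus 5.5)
The sequence form follows from Theorem \ref{thm:imported-beta-sieve} by the usual sandwich argument. I would sum the pointwise inequality \eqref{eq:appendix-sieve-inequality} over $a\in\mathcal A$ and then exchange the order of summation. For each $a\in\mathcal A$,
\[
\sum_{q\mid(a,P(z))}\lambda_q^-\le \mathbf 1_{(a,P(z))=1}\le\sum_{q\mid(a,P(z))}\lambda_q^+ .
\]
Summing over $a$, the middle term becomes $S(\mathcal A,z)$. The outer terms become $\sum_{q\mid P(z)}\lambda_q^\pm|\mathcal A_q|$, since $q\mid(a,P(z))$ holds exactly when $q\mid P(z)$ and $a\in\mathcal A_q$. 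By the support property (i), only squarefree $q\mid P(z)$ with $q\le Q$ contribute. This is exactly the range in which the hypothesis $|\mathcal A_q|=Xg(q)+r_q$ is available, so I would substitute it term by term.

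Substituting gives
\[
\sum_{q}\lambda_q^\pm|\mathcal A_q| = X\sum_{q\mid P(z)}\lambda_q^\pm g(q)+\sum_{\substack{q\le Q\\ q\mid P(z)}}\lambda_q^\pm r_q .
\]
In the first sum I may extend over all $q\mid P(z)$, because $\lambda_q^\pm$ vanishes outside the support anyway. Property (iv) evaluates it as $V_g(z)(1+O(e^{-c_\kappa s}))$. By (ii), $|\lambda_q^\pm|\le1$, so the remainder sum has absolute value at most $\sum_{q\le Q,\,q\mid P(z)}|r_q|$. Hence both the upper and the lower bound for $S(\mathcal A,z)$ have the form
\[
XV_g(z)\bigl(1+O(e^{-c_\kappa s})\bigr)+O\Bigl(\sum|r_q|\Bigr),
\]
with constants depending only on $\kappa$ and $C_\kappa$. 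Since $S(\mathcal A,z)$ lies between two quantities of this form, it has this form as well, which is \eqref{eq:appendix-sequence-conclusion}.

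There is no genuine obstacle. The only point needing care is the bookkeeping around the ranges of $q$. The distribution hypothesis covers only $q\le Q$ with $q\mid P(z)$, and property (i) guarantees that no other $q$ ever appears. The main-term identity (iv) is stated over all $q\mid P(z)$, but it likewise involves only supported $q$. I would also note that $\mathcal A$ may be a sequence with multiplicities; the argument counts with multiplicity throughout, so nothing changes.
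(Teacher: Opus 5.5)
Your proposal is correct and is essentially the paper's own proof. You sum the pointwise sandwich (iii) over $\mathcal A$ to get $\sum_q\lambda_q^\pm|\mathcal A_q|$, then substitute $|\mathcal A_q|=Xg(q)+r_q$ on the support given by (i), bound the remainders using $|\lambda_q^\pm|\le1$, and evaluate the main term by (iv). Your extra bookkeeping on the range of $q$ and on the interchange of summation just makes explicit what the paper's three-line argument leaves implicit.
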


\begin{proof}
By \eqref{eq:appendix-sieve-inequality},
\[
\sum_{q\mid P(z)}\lambda_q^-|\mathcal A_q|
\le S(\mathcal A,z)\le
\sum_{q\mid P(z)}\lambda_q^+|\mathcal A_q|.
\]
Substituting $|\mathcal A_q|=Xg(q)+r_q$, using $|\lambda_q^\pm|\le1$, and applying \eqref{eq:appendix-weighted-main} gives both upper and lower estimates with the same main term and with error bounded by the remainder sum in \eqref{eq:appendix-sequence-conclusion}.
\end{proof}

\begin{proposition}[Long-interval forbidden residue classes]\label{prop:appendix-long-interval-residue}
Let $I$ be an interval of length $N$, let $2\le z\le N^{\beta_N}$ with $\beta_N\to0$, and put $Q=N^{1/2}$.  For each prime $p\le z$ let $\Omega_p\subset\mathbb Z/p\mathbb Z$ satisfy $|\Omega_p|\le2$ and $|\Omega_p|<p$.  Then, uniformly in the sets $\Omega_p$,
\[
\#\{m\in I:m\bmod p\notin\Omega_p\text{ for all }p\le z\}
=
N\prod_{p\le z}\left(1-\frac{|\Omega_p|}{p}\right)
\left(1+O(e^{-c/\beta_N})\right)+O(N^{1/2+o(1)}),
\]
with an absolute constant $c>0$.  The constants are independent of the particular residue classes.  If $z$ is bounded, the same conclusion follows by the Chinese remainder theorem, with the $O(1)$ periodic error absorbed by $N^{1/2+o(1)}$.
\end{proposition}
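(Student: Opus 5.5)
The plan is to deduce the statement from Corollary \ref{cor:imported-beta-sequence} with $\kappa=2$, in the same way as the proof of Lemma \ref{lem:long-interval-beta-sieve}.

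\emph{Bounded $z$.} If $z\le z_0$, the sifting condition is periodic modulo $P(z)\le P(z_0)$. The Chinese remainder theorem then gives the count
\[
N\prod_{p\le z}(1-|\Omega_p|/p)+O(P(z_0)).
\]
The error is absorbed by $N^{1/2+o(1)}$. So we may assume $z\to\infty$.

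\emph{Sequence and local densities.} Define the sequence $\mathcal A=\{P_\Omega(m):m\in I\}$, where $P_\Omega(m)=\prod_{p\le z,\ m\bmod p\in\Omega_p}p$. Then $m$ survives the sieve exactly when $(P_\Omega(m),P(z))=1$. For squarefree $q\mid P(z)$, the set $\mathcal A_q$ consists of the $m\in I$ lying in one of the $\nu(q)=\prod_{p\mid q}|\Omega_p|$ CRT classes modulo $q$. Counting each class in an interval of length $N$ gives $N/q+O(1)$. Hence $|\mathcal A_q|=Ng(q)+r_q$ with $g(q)=\nu(q)/q$ and $|r_q|\le\nu(q)\le 2^{\omega(q)}$. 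The function $g$ is multiplicative and supported on squarefree $q$, and $0\le g(p)<1$ because $|\Omega_p|<p$.

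\emph{Dimension condition.} Next I verify \eqref{eq:appendix-dimension} with $\kappa=2$ and an absolute $C_2$.
\begin{itemize}
\item At $p=2$ the factor $(1-g(2))^{-1}$ is at most $2$, since $|\Omega_2|\le1$.
\item At $p=3$ the factor is at most $3$.
\item For $p\ge5$ we have $-\log(1-\nu(p)/p)\le 2/p+O(1/p^2)$.
\end{itemize}
Mertens' theorem then bounds $\prod_{w\le p<z}$ by $C(\log z/\log w)^2$ uniformly in $w$.

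\emph{Applying the corollary.} Take $Q=N^{1/2}$, so $s=\log Q/\log z\ge 1/(2\beta_N)\to\infty$; in particular $s\ge s_0(2,C_2)$ for large $N$. Corollary \ref{cor:imported-beta-sequence} gives main term $NV_g(z)(1+O(e^{-c_2 s}))=NV_g(z)(1+O(e^{-c/\beta_N}))$ with $c=c_2/2$. Since $C_2$ is absolute, this constant is independent of the classes $\Omega_p$.

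\emph{Remainder.} The remainder is bounded by $\sum_{q\le N^{1/2}}2^{\omega(q)}\ll N^{1/2}\log N=N^{1/2+o(1)}$.

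The only delicate point is the uniformity of the dimension constant over all admissible $\Omega_p$, in particular at the small primes $2$ and $3$ where $|\Omega_p|/p$ is not small. That uniformity is settled by the explicit bounds in the dimension step.
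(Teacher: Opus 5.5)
Your proposal is correct and follows essentially the same route as the paper: dispose of bounded $z$ by periodicity, verify the CRT distribution $|\mathcal A_q|=N\nu(q)/q+r_q$ with $|r_q|\le\nu(q)\le2^{\omega(q)}$, check the $\kappa=2$ dimension condition with an absolute constant, apply Corollary~\ref{cor:imported-beta-sequence} with $Q=N^{1/2}$ and $s\ge1/(2\beta_N)$, and bound the remainder by $\sum_{q\le N^{1/2}}2^{\omega(q)}\ll N^{1/2}\log N$. Your explicit encoding of the sequence via $P_\Omega(m)$ makes the divisibility relation $\mathcal A_q=\{a:q\mid a\}$ in the corollary explicit, which the paper leaves implicit.
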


\begin{proof}
For bounded $z$ the assertion is immediate from periodicity modulo $P(z)$.  Assume now that $z\to\infty$.  We verify the residue-class model used in Lemma \ref{lem:long-interval-beta-sieve}.  Let $I$ be an interval of length $N$.  For each prime $p\le z$ let $\Omega_p\subset\mathbb Z/p\mathbb Z$ be a set of forbidden residue classes, put $\nu(p)=|\Omega_p|$, and assume $\nu(p)\le2$ and $\nu(p)<p$.  For squarefree $q\mid P(z)$, the Chinese remainder theorem gives a set $\Omega_q\subset\mathbb Z/q\mathbb Z$ of
\[
        \nu(q)=\prod_{p\mid q}\nu(p)
\]
classes.  Hence
\begin{equation}\label{eq:appendix-interval-distribution}
\#\{m\in I:m\bmod q\in\Omega_q\}
=N\frac{\nu(q)}q+r_q,
        \qquad |r_q|\le\nu(q)\le2^{\omega(q)}.
\end{equation}
Thus $g(q)=\nu(q)/q$ is multiplicative on squarefree $q$.  The dimension condition with $\kappa=2$ follows because
\[
\log\prod_{w\le p<z}\left(1-\frac{\nu(p)}p\right)^{-1}
\le \sum_{w\le p<z}\frac2p+O\left(\sum_p\frac1{p^2}\right)
\ll \log\frac{\log z}{\log w}+O(1),
\]
where the prime $2$ contributes at most an absolute factor; this is exactly \eqref{eq:appendix-dimension} for a fixed absolute $C_2$.
Finally,
\begin{equation}\label{eq:appendix-tau2-sum}
\sum_{q\le N^{1/2}}2^{\omega(q)}
\le \sum_{q\le N^{1/2}}\tau(q)
\ll N^{1/2}\log N
=N^{1/2+o(1)}.
\end{equation}
Taking $Q=N^{1/2}$ and $z\le N^{\beta_N}$ gives
\[
        s=\frac{\log Q}{\log z}\ge \frac1{2\beta_N}.
\]
For $\beta_N=(\log\log N)^{-1/2}$, the imported theorem gives
\[
        e^{-c_2s}(\log\log N)^2=o(1),
\]
which is the only quantitative strength needed in Theorem \ref{thm:truncated-second-moment}.  The inadmissible case $\nu(2)=2$, which occurs in the two-form application precisely for odd gaps $h$, is not passed to the beta sieve; the sifted set is then empty and is treated separately before Lemma \ref{lem:long-interval-beta-sieve} is invoked.

Combining \eqref{eq:appendix-interval-distribution}, \eqref{eq:appendix-dimension}, \eqref{eq:appendix-tau2-sum}, and Corollary \ref{cor:imported-beta-sequence} proves Proposition \ref{prop:appendix-long-interval-residue}; this is exactly the input used in Lemma \ref{lem:long-interval-beta-sieve}.
\end{proof}
\section*{Acknowledgements}
The author acknowledges the use of OpenAI's ChatGPT during the preparation of this manuscript. While it was used for ideation, formulation, proof exploration and refinement, narrowing the search space, programming, LaTeX formatting and other forms of orchestration, the author nonetheless takes full responsibility for the accuracy of the final contents of this paper.

The accompanying Lean formalisation was developed by the author with OpenAI Codex.  The author likewise takes full responsibility for the formal statements' fidelity to the results of this paper and for the released source.

\end{document}